\documentclass[12pt]{amsart} 
\usepackage{amssymb}
\usepackage{xcolor}
\usepackage{fullpage}
\usepackage[colorlinks=true]{hyperref}
\usepackage{listings}
\usepackage[normalem]{ulem}
\usepackage{minted}
\usepackage{comment}

\newcommand{\bbQ}{\mathbb{Q}}
\newcommand{\bbR}{\mathbb{R}}
\newcommand{\bbZ}{\mathbb{Z}}

\newcommand{\cO}{\mathcal{O}}
\newcommand{\cT}{\mathcal{T}}

\newcommand{\legendre}[2]{\genfrac{(}{)}{}{}{#1}{#2}}

\font\cyr=wncyr10
\newcommand{\Sha}{\hbox{\cyr X}}

\DeclareMathOperator{\ord}{ord}

\DeclareMathOperator{\rank}{rank}

\DeclareMathOperator{\sgn}{sign}

\numberwithin{equation}{section}
\newtheorem{thm}{Theorem}[section]
\newtheorem{rem}{Remark}[section]
\newtheorem{lem}{Lemma}[section]
\newtheorem{theorem}{Theorem}[section]
\newtheorem{conj}[theorem]{Conjecture}

\newtheorem{proposition}[theorem]{Proposition}

\theoremstyle{definition}

\newtheorem*{remark-nonum}{Remark}
\newtheorem*{remarks-nonum}{Remarks}

\begin{document}

\title{Classification of the rank of a certain family of elliptic curves}

\author{ Arkabrata Ghosh, Bidisha Roy, and Richa Sharma }

\address[Arkabrata Ghosh]{ Department of Mathematics, SRM University-AP, Amaravati 522240, Andhra Pradesh, India}
\email[arkabrata Ghosh]{arka2686@gmail.com}

\address[Bidisha Roy]{Department of Mathematics and Statistics, Indian Institute of Technology Tirupati, Yerpedu, Andhra Pradesh 517619, India}
 \email[Bidisha Roy]{bidisha.roy@iittp.ac.in, brroy123456@gmail.com}

 \address[Richa Sharma]{Department of Mathematics, Jamia Milia Islamia University, New Delhi}
\email[Richa Sharma]{richasharma582@gmail.com}




\maketitle


\section*{Abstract}
In this article, we study the family of elliptic curves
$E_{-2pq}: y^2=x^3-2pqx$, where $p$ and $q$ are distinct odd primes.
Using a $2$-isogeny methods and some elementary techniques, we obtain explicit possibilities for the Mordell--Weil ranks, conditional on the Parity Conjecture. Moreover, in the rank-one case, we are also able to derive explicit conditions that are independent of the parity conjecture. Moreover, the main results depend only on the residue classes of $(p,q)$ modulo $8$ and the Legendre symbols $\legendre{p}{q}$.  

\section{Introduction}

The study of elliptic curves plays an important role in modern number theory and arithmetic geometry. One of the most fundamental properties to an elliptic curve $E/\mathbb{Q}$ is the rank of its Mordell-Weil group $E(\mathbb{Q})$. Using the Mordell-Weil Theorem, we know the group of rational points of an elliptic curve is finitely generated and hence admits the following decomposition:

$$
E(\mathbb{Q}) \cong E(\mathbb{Q})_{\mathrm{tors}} \oplus \mathbb{Z}^{r},
$$

where $E(\mathbb{Q})_{\mathrm{tors}}$ denotes the torsion subgroup and $r$ is the Mordell--Weil rank. Even today, determining the rank of elliptic curves remains one of the most challenging problems in algebraic Number Theory and is closely related to the Birch and Swinnerton-Dyer conjecture.

A particularly fruitful approach to studying the rank of elliptic curves that possess a rational point of order two is the method of $2$-descent. This method allows one to obtain information about the Mordell-Weil group through the analysis of certain associated homogeneous spaces (or torsors). Over the years, $2$-descent has been successfully applied to several families of elliptic curves of the form $y^2=x^3-Dx$, leading to explicit rank computations and descriptions of their arithmetic properties. Spearman \cite{Sp07,Sp07_1} investigated the curves $ y^2=x^3-px$ and $y^2=x^3-2px$, and obtained conditions on the prime $p$ under which these curves have rank at least two. Fujita and Terai \cite{FT11} considered the family
$
y^2=x^3-p^k x,
$
where $p$ is prime and $k=1,2,3$, and established necessary and sufficient conditions for the rank to be equal to one or two. More recently, in a couple of articles \cite{Gh25, Gh26}, Ghosh  investigated the family
$
y^2=x^3-5pqx,
$
and
$
y^2=x^3-pqx
$
and determined several properties of its torsion subgroup and rank. The techniques developed there relied heavily on the method of $2$-descent and the analysis of the associated homogeneous spaces.

Motivated by these results, in this paper, we consider the family
\begin{equation}
\label{eq:1.1}
    E_{-2pq}:~ y^2 = x^3 -2pqx,
\end{equation}where $p$ and $q$ are distinct odd primes. Our aim is to obtain
explicit conditional possibilities and upper bounds for the rank of this family.
Now, assuming the Parity Conjecture \ref{conj:parity}, we will prove the following Theorems.

\begin{thm}
    \label{thm:1.1} Suppose $p$ and $q$ are distinct odd primes such that $(p,q)= (1,1) \pmod 8 $.  Then 
  $$
\operatorname{rank}(E_{-2pq}(\mathbb{Q}))=
\begin{cases}
1 \ \text{or}\ 3 \ \text{or}\ 5, & \text{if } \legendre{p}{q}=1,~~ \\
1 \ \text{or}\ 3, & \text{if }  \legendre{p}{q}=-1.
\end{cases}
$$
\end{thm}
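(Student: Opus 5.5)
The plan is to run a complete $2$-isogeny descent on $E=E_{-2pq}$ and its isogenous curve $E'\colon y^2=x^3+8pqx$ (note $a=-2pq$, $a'=-4a=8pq$). We bound the sizes of the images $\alpha(E(\mathbb{Q}))\subseteq \mathbb{Q}^\times/\mathbb{Q}^{\times2}$ and $\alpha'(E'(\mathbb{Q}))$ using local solvability of the homogeneous spaces. We then combine this with the standard formula
$$2^{r}=\frac{|\alpha(E(\mathbb{Q}))|\cdot|\alpha'(E'(\mathbb{Q}))|}{4}$$
and with the Parity Conjecture \ref{conj:parity}, which fixes $r$ to be odd.

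\textbf{The images.} For $E$ the candidates are $b_1\mid 2pq$ with either sign, i.e.\ the group generated by $-1,2,p,q$, of order $16$. The torsion point $(0,0)$ contributes the class of $-2pq$. For $E'$, since $a'=8pq>0$, the torsors $N^2=b_1M^4+(8pq/b_1)e^4$ with $b_1<0$ have no real points. Hence the candidates are the positive divisors of $2pq$ modulo squares, a group of order $8$, and $(0,0)$ contributes the class of $2pq$. This gives the trivial bound $2^r\le 16\cdot 8/4=32$, i.e.\ $r\le5$. That covers the case $\legendre{p}{q}=1$ once the parity of $r$ is known.

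\textbf{Parity.} I would compute the global root number of $y^2=x^3-Dx$ with $D=2pq$ via the known local root number formulas (Birch--Stephens / Rohrlich type). The curve has additive reduction at $2,p,q$. For $p\equiv1\pmod 4$, the local factor at an odd prime $\ell\| D$ is governed by $\legendre{-1}{\ell}$ or $\legendre{2}{\ell}$. With $p\equiv q\equiv 1\pmod 8$ these factors are $+1$, so everything reduces to the $2$-adic factor for $D\equiv 2\pmod{16}$ (since $pq\equiv1\pmod 8$, $2pq\equiv 2\pmod{16}$). I expect this to give $w(E)=-1$. Under the Parity Conjecture $r$ is then odd, so $r\in\{1,3,5\}$. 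In particular $r\ge1$.

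\textbf{The case $\legendre{p}{q}=-1$.} Here I want to show $|\alpha(E)|\cdot|\alpha'(E')|\le 64$, so that $r\le4$ and, by parity, $r\in\{1,3\}$. The natural candidate to exclude is $b_1=p$ on $E$, i.e.\ the torsor $N^2=pM^4-2qe^4$. Working $q$-adically, after normalizing to a primitive solution, reduction mod $q$ with $q\nmid M$ forces $p$ to be a square mod $q$, which contradicts $\legendre{p}{q}=-1$. If $q\mid M$, a valuation argument gives a contradiction with primitivity or pushes back to the same congruence. Since $\alpha(E)$ is a group, excluding one class halves it to at most $8$, which already gives $2^r\le 8\cdot 8/4=16$. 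By symmetry, and since $\legendre{q}{p}=\legendre{p}{q}$ by reciprocity when $p\equiv1\pmod 4$, $b_1=q$ is excluded as well.

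For $E'$, the torsor with $b_1=p$ is $N^2=pM^4+8qe^4$, and the same $q$-adic argument excludes it. Thus $|\alpha'|\le4$ and $2^r\le 8\cdot4/4=8$, giving $r\le3$ directly. When $\legendre{p}{q}=1$, the $2$-adic, $p$-adic and $q$-adic tests all pass (because $-1$ and $2$ are squares mod $p,q$ and in the relevant $2$-adic sense), so no further reduction is available and the bound remains $5$.

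\textbf{Main obstacle.} The step I expect to be hardest is the root number computation at $2$. This includes pinning down the exact formula for $w_2$ when $D\equiv2\pmod{16}$ and making sure the signs at $p$ and $q$ really are $+1$. If that computation instead gave $w=+1$, the statement would be false, so I would double check it against a small numerical example such as $p=17$, $q=41$.
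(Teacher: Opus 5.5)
Your proposal is correct and follows essentially the paper's argument. You use the same $2$-isogeny descent bounds: $|\alpha|\le 16$ and $|\overline{\alpha}|\le 8$ in general, and $|\alpha|\le 8$, $|\overline{\alpha}|\le 4$ when $\legendre{p}{q}=-1$, obtained via the mod-$q$ obstruction to the class $p$. You then combine Proposition~\ref{prop:2.1} with $w(E_{-2pq}/\bbQ)=-1$ and the Parity Conjecture, exactly as the paper does.

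Your anticipated obstacle disappears if you apply the Birch--Stephens global formula directly, as in Lemma~\ref{lem:root}. For squarefree even $D=2pq>0$ there are no $\ell^2\parallel D$ factors and the $2$-adic factor is $+1$, so $w=\sgn(-D)=-1$ with no separate local computation. Incidentally, the local factor at an odd $\ell\parallel D$ is $\legendre{-2}{\ell}$, which is indeed $+1$ here.
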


\begin{thm}
    \label{thm:1.2}

 Suppose $p$ and $q$ are distinct odd primes such that $(p,q) \equiv (1,3)$, $(3,1)$, $(1,5)$,  $(5,1)$, $(1,7)$ or $(7,1)\pmod 8.$ Then   we have
$$
\operatorname{rank}(E_{-2pq}(\mathbb{Q}))=
\begin{cases}
1 \ \text{or}\ 3, & \text{if } \legendre{p}{q}=1,~~ \\
1, & \text{if }  \legendre{p}{q}=-1.
\end{cases}$$

\end{thm}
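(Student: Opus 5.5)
Set $D=-2pq$, so $E=E_{-2pq}: y^2=x^3+Dx$, with the $2$-isogenous curve $E': Y^2=X^3-4DX = X^3+8pqX$. We use the standard $2$-isogeny descent formula
$$
2^{\rank E(\bbQ)} = \frac{|\alpha(E(\bbQ))|\cdot|\alpha'(E'(\bbQ))|}{4},
$$
where $\alpha(E(\bbQ))\subseteq \bbQ^\times/\bbQ^{\times2}$ is the image of the connecting map and is bounded above by the Selmer group. That Selmer group is cut out by the $d\mid D$ for which the quartic
$$
N^2=dM^4+(D/d)e^4
$$
is everywhere locally solvable; the map $\alpha'$ is treated in the same way with $N^2=d'M^4-(4D/d')e^4$, $d'\mid 8pq$. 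The candidates are $d\in\{\pm1,\pm2,\pm p,\pm q,\pm 2p,\pm 2q,\pm pq,\pm 2pq\}$, which are the same sixteen classes on both sides.

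Since $D<0$, the image of $\alpha$ automatically contains $1$ and $D=-2pq$, and the image of $\alpha'$ contains $1$ and $-4D\equiv 2pq$. These give the trivial lower bounds. Real solvability of $N^2=d'M^4+(4D/d')e^4$ with $4D/d'$ of sign opposite to $d'$ forces $d'>0$, so the $E'$-side Selmer group lies among the eight positive $d'$. On the $E$-side both signs are allowed, but the $\mathbb{Q}_2$-, $\mathbb{Q}_p$- and $\mathbb{Q}_q$-conditions cut things down. For $d$ a unit at $p$ (respectively divisible by $p$), solvability at $p$ amounts to $d$ (respectively $D/d$, or a combination) being a square mod $p$, or to the relevant $-1$ or $\pm2$ twist being a square mod $p$. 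These conditions involve only $\legendre{-1}{p}$, $\legendre{2}{p}$, $\legendre{q}{p}$, and the analogous symbols at $q$, so they are determined by $p,q \bmod 8$ and $\legendre{p}{q}$ (via quadratic reciprocity). At the prime $2$, $d$ odd requires $dM^4+(D/d)e^4$ with $D/d\equiv 2\pmod 4$ to be a $2$-adic square. This is a congruence condition mod $8$ or $16$ on $d$ that depends only on $p,q\bmod 8$.

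For each residue pair in the theorem, with one of $p,q\equiv1\pmod 8$, I would tabulate which $d$ survive all local conditions on each side. In the case $\legendre{p}{q}=-1$, the expectation is that the $E$-side Selmer group is $\{1,D\}$ together with at most one more pair, and the $E'$-side is of size $2$ or $4$, giving the bound $|S|\,|S'|\le 16$, i.e. $\rank\le 2$. In the case $\legendre{p}{q}=1$, more classes survive, and the bound becomes $\rank\le 4$ or so.

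The parity conjecture then finishes the argument. The root number of $y^2=x^3+Dx$ is $w=-\prod_{\ell} w_\ell$, with the local factors at $\ell=p,q$ ($\ell \| D$, additive reduction) given by $\legendre{-1}{\ell}$ or $\legendre{2}{\ell}$-type formulas, and with $w_2$ determined by $D\bmod 16$ or $32$. I would check that in all six residue cases $w=-1$, so the rank is odd. Combined with the bounds this gives $\{1,3\}$, respectively $\{1\}$. For the exact rank-one case a rank $\ge1$ unconditionally is not given by torsion alone, so parity is genuinely used; alternatively, the Selmer bound of rank $\le1$ together with the parity conjecture yields exactly $1$.

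The main obstacle is the $2$-adic analysis of the quartics, especially for even $d$ and for $d'$ on the $E'$ side, where $4D/d'$ carries extra powers of $2$. I would handle this by reducing to $M,e$ coprime, splitting according to the $2$-adic valuation of $M$ and $e$, and checking squares modulo $16$ by hand. A second delicate point is ensuring that the local conditions genuinely eliminate enough classes in the $\legendre{p}{q}=-1$ case to reach the bound $\rank\le 2$ (hence $\rank=1$); if the count there is off by one, I would supplement it with a second descent or a Cassels–Tate pairing argument on $\Sha$.
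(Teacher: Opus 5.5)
Your strategy is the paper's: bound $r$ by the two $2$-isogeny images via Proposition~\ref{prop:2.1}, then use $w(E_{-2pq}/\bbQ)=-1$ and the Parity Conjecture. As written, however, it is a plan rather than a proof. The only substantive step is deciding which classes survive the local conditions in the six residue classes, and you replace it with an ``expectation''. For $\legendre{p}{q}=1$ you settle for ``rank $\le 4$ or so''. That hedge is exactly where the argument can fail: a descent bound of $5$, which is what occurs in the neighbouring class $(1,1)\pmod 8$ (Theorem~\ref{thm:1.1}), would not give $\{1,3\}$. Your fallback to a second descent or Cassels--Tate suggests the count was never made.

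The root number is also only promised, and your description of the local factors is too loose to check. For $\ell\ge 5$ with $\ell\parallel 2pq$ the local factor is $\legendre{-2}{\ell}$, which varies across your six classes, so the factor at $2$ must be computed with care. The paper avoids this by quoting Birch--Stephens: $w=\sgn(-2pq)\cdot 1=-1$, since $2pq$ is squarefree and $2pq\equiv 2\pmod 4$ is not among the exceptional residues $1,3,11,13\pmod{16}$.

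The missing computation is short. Since one of $p,q$ is $1\pmod 4$, $\legendre{p}{q}=\legendre{q}{p}$, so by symmetry take $p\equiv 1\pmod 8$ and $q\equiv 3,5,7\pmod 8$.

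\textbf{The $E$ side.} The class $d$ requires $d$ (resp.\ $-2pq/d$) to be a square mod $q$ when $q\nmid d$ (resp.\ $q\mid d$), and similarly at $p$.
\begin{itemize}
\item Since $q\not\equiv 1\pmod 8$, exactly one of $-1,2,-2$ is a square mod $q$, so two of these three representatives die.
\item Among $\pm p,\pm q$, reciprocity turns the two Legendre conditions of two of them into $\legendre{p}{q}=1$ and $\legendre{p}{q}=-1$ simultaneously. These are $-q,-p$ for $q\equiv 3$; $\pm q$ for $q\equiv 5$; and $q,-p$ for $q\equiv 7$.
\item The other two of $\pm p,\pm q$ require $\legendre{p}{q}=1$.
\end{itemize}
Hence $|\alpha(E_{-2pq}(\bbQ))|\le 8$, resp.\ $\le 4$.

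\textbf{The $\overline{E}$ side.} It suffices to treat odd $d$. The $2$-adic argument forces $d\equiv 1\pmod 8$: if $M$ is odd then $N^2\equiv d\pmod 8$, and if $M$ is even the right side has $2$-adic valuation $3$.
\begin{itemize}
\item This kills $q$, and also $pq$, since $pq\equiv q\pmod 8$.
\item The class $p$ survives only if $\legendre{p}{q}=\legendre{2q}{p}=1$, i.e.\ only if $\legendre{p}{q}=1$.
\end{itemize}
So $|\overline{\alpha}(\overline{E}(\bbQ))|\le 4$, resp.\ $=2$.

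Together these give $r\le 3$, resp.\ $r\le 1$. This is sharper than the $r\le 4$, $r\le 2$ you were aiming for, and no second descent is needed.
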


\begin{thm}
    \label{thm:1.3} 
 Suppose $p$ and $q$ are distinct odd primes such that $(p,q)$ $\equiv  (3,3)$, $(3,5)$, $(5,3)$, $(3,7)$, $(7,3)$, $(5,7)$, $(7,5)$ or $(5,5) \pmod 8 $.  Then we have
  $$
\operatorname{rank}(E_{-2pq}(\mathbb{Q}))= 1
$$

for both $\legendre{p}{q}=1$ and $\legendre{p}{q}=-1$.

\end{thm}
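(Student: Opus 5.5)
The plan is the standard descent via the rational $2$-isogeny $\phi: E\to E'$, where $E=E_{-2pq}: y^2=x^3-2pqx$ and $E': y^2=x^3+8pqx$. I will use the descent formula
$$\operatorname{rank} E_{-2pq}(\mathbb{Q}) = \log_2|\alpha(E(\mathbb{Q}))| + \log_2|\alpha'(E'(\mathbb{Q}))| - 2.$$
Here $\alpha$ and $\alpha'$ are the connecting homomorphisms into $\mathbb{Q}^\times/\mathbb{Q}^{\times 2}$. Their images consist of those divisors $b_1$ of $-2pq$ (respectively of $8pq$, i.e.\ of $2pq$ modulo squares) for which the quartic
$$N^2=b_1M^4+b_2e^4,\qquad b_1b_2=-2pq \ \text{(resp. } 8pq),$$
has a solution with $\gcd$ conditions as usual.

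The trivial elements are $1,-2pq\in\operatorname{Im}\alpha$ and $1,2pq\in\operatorname{Im}\alpha'$, the latter coming from $T=(0,0)$. For $E'$, every negative $b_1$ is excluded by positivity, since both coefficients would then be negative. So $\operatorname{Im}\alpha'\subseteq\{1,2,p,q,2p,2q,pq,2pq\}$, while $\operatorname{Im}\alpha\subseteq\{\pm1,\pm2,\pm p,\pm q,\pm2p,\pm2q,\pm pq,\pm2pq\}$. The goal is to show that, for every pair of residue classes listed in Theorem \ref{thm:1.3} and either value of $\legendre{p}{q}$, one has $|\operatorname{Im}\alpha|\cdot|\operatorname{Im}\alpha'|\le 8$, hence $\operatorname{rank}\le 1$.

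The elimination is carried out locally, one candidate $b_1$ at a time.

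\textbf{Reduction modulo $p$ or $q$.} When a prime $\ell\in\{p,q\}$ divides $b_1$ exactly once and not $b_2$, reducing modulo $\ell$ forces $b_2$ (essentially $\pm 2$, $\pm 8$, or $\pm$ the other prime times a power of $2$) to be a square modulo $\ell$. This produces conditions such as $\legendre{2q}{p}=1$ or $\legendre{-2q}{p}=1$.

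\textbf{Reduction at $2$.} When $b_1$ is odd, I work modulo $8$ or $16$ using the parity of $M$ and $e$. For $E'$ the coefficient $b_2$ carries a factor $8$, so $N^2\equiv b_1M^4\pmod 8$ forces $b_1\equiv1\pmod 8$. This kills $p,q,pq$ unless the relevant product is $1\bmod 8$, and symmetrically kills $2p,2q$ via the complementary coefficient.

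\textbf{Why the listed classes suffice.} In the classes of Theorem \ref{thm:1.3}, neither $p$ nor $q$ is $1\bmod 8$. Hence $p,q\not\equiv1\pmod 8$, and the quadratic characters $\legendre{2}{\cdot}$ and $\legendre{-1}{\cdot}$ are never simultaneously trivial at $p$ or at $q$. Using quadratic reciprocity to pair $\legendre{p}{q}$ with $\legendre{q}{p}$, I expect that in each case at most two nontrivial classes survive in total. For example, one possibility is $\operatorname{Im}\alpha=\{1,-2pq\}$ and $\operatorname{Im}\alpha'$ of order $4$; another is that a single extra class such as $-1$ or $2$ survives on one side only. Since the images are groups, eliminating one element of a coset pair eliminates its partner, which cuts the casework roughly in half. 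I will organise the computation in a table indexed by $(p,q)\bmod 8$ and $\legendre{p}{q}$.

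Finally, for the lower bound I compute the global root number of $E_{-2pq}$. I will use the known formula for $y^2=x^3-Dx$ with $D$ fourth-power-free (Birch--Stephens), whose local factors at $p,q$ depend only on $p,q\bmod 8$ and whose factor at $2$ depends on $D\bmod 16$ with $D=2pq$. The aim is to check that $w(E_{-2pq})=-1$ in all listed classes. The Parity Conjecture \ref{conj:parity} then gives odd rank, and together with rank $\le1$ this yields rank exactly $1$.

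The main obstacle is the $2$-adic elimination for the odd classes in $\operatorname{Im}\alpha$ (such as $-1,\pm p,\pm q$), where $b_2$ is even but only divisible by $2$ once. Here mod-$8$ arguments can be inconclusive, and I would first try working modulo $16$ with the case split on the parity of $M$ and $e$. If that fails for some class, I would instead argue via the image being a subgroup together with an obstruction already established for its product with $-2pq$.
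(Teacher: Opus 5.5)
Your proposal is correct and follows essentially the same route as the paper. Local eliminations at $p$, $q$ and $2$ give $|\alpha(E_{-2pq}(\bbQ))|\le 4$ and $|\overline{\alpha}(\overline{E_{-2pq}}(\bbQ))|=2$ in all eight classes, so Proposition~\ref{prop:2.1} gives $r\le 1$, and the Birch--Stephens root number $w(E_{-2pq}/\bbQ)=-1$ together with the Parity Conjecture forces $r=1$. The $2$-adic difficulty you anticipate for $\alpha$ never arises: in these classes the Legendre-symbol conditions at $p$ and $q$, combined with quadratic reciprocity, already leave at most one complementary pair. On $\overline{E_{-2pq}}$, your mod-$8$ argument, supplemented by $\legendre{2}{p}=\legendre{2}{q}=1$ for the class $pq$ (needed when $pq\equiv 1\pmod 8$), shows the image is exactly $\{1,2pq\}$, so it is never of order $4$.
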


\begin{thm}
    \label{thm:1.4} 
    
 \noindent  (i) Suppose $p$ and $q$ are distinct odd primes such that $(p,q)= (7,7) \pmod 8 $.   Then 
  $$
\operatorname{rank}(E_{-2pq}(\mathbb{Q}))= 1 \ \text{or}\ 3
$$

for both $\legendre{p}{q}=1$ and $\legendre{p}{q}=-1$.
\end{thm}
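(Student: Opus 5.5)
Throughout, write $E=E_{-2pq}$ and $E'=E_{8pq}: y^2=x^3+8pqx$ for the $2$-isogenous curve. The plan for Theorem~\ref{thm:1.4} is the standard $2$-isogeny descent combined with the Parity Conjecture. We use
$$\operatorname{rank} E(\mathbb{Q})=\log_2|\alpha(E(\mathbb{Q}))|+\log_2|\alpha'(E'(\mathbb{Q}))|-2,$$
where $\alpha(E(\mathbb{Q}))\subseteq \mathbb{Q}^\times/\mathbb{Q}^{\times2}$ is contained in the subgroup generated by $-1,2,p,q$, cut out by solvability of the quartics $N^2=b_1M^4-\frac{2pq}{b_1}e^4$. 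Dually, $\alpha'$ is cut out by $N^2=b_1M^4+\frac{8pq}{b_1}e^4$ with $b_1>0$ dividing $2pq$. We bound each image from above by local conditions only. Afterwards, parity (Conjecture~\ref{conj:parity}) and an explicit point fix the final answer.

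\textbf{Step 1: the $E$ side.} We always have $1,-2pq\in\alpha(E)$. For $(p,q)\equiv(7,7)\pmod 8$, we test each $b_1\in\{\pm1,\pm2,\pm p,\dots,\pm2pq\}$ for solvability in $\mathbb{R}$, $\mathbb{Q}_2$, $\mathbb{Q}_p$ and $\mathbb{Q}_q$.
\begin{itemize}
\item At $p$ with $p\nmid b_1$, the equation reduces to $N^2\equiv b_1M^4 \pmod p$, so $b_1$ must be a square mod $p$; similarly at $q$.
\item At $2$ we use congruences mod $16$ or $32$, exploiting $pq\equiv 1\pmod 8$ and $p\equiv q\equiv 7\pmod 8$.
\item Since $\legendre{-1}{p}=-1$ and $\legendre{2}{p}=1$, quadratic reciprocity gives $\legendre{q}{p}=-\legendre{p}{q}$. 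So exactly one of $p,q$ is a square modulo the other, and the surviving $b_1$ should not depend on which sign occurs.
\end{itemize}
We expect to land on $|\alpha(E)|\le 4$, e.g.\ $\{1,-2pq,2,-pq\}$ or a similar pair of classes.

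\textbf{Step 2: the $E'$ side and parity.} Here $b_1>0$ is forced by the real place. Running the same local tests on $b_1\in\{1,2,p,q,2p,2q,pq,2pq\}$, we aim for $|\alpha'(E')|\le 8$. The two steps together give $\operatorname{rank}\le 3$. Since $pq\equiv1\pmod 8$, the standard root number formula for $y^2=x^3-Dx$ (with $D=2pq$) should give $w(E)=-1$, and the Parity Conjecture then makes the rank odd, hence $1$ or $3$.

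\textbf{Main obstacle.} The hardest part is the $2$-adic case analysis in both descents. The interaction between the powers of $2$ in $b_1$ and in $2pq$ or $8pq$ requires splitting on the parity of $\ord_2$ of $M$, $e$ and $N$. It also requires checking that the bound holds uniformly whether $\legendre{p}{q}=1$ or $-1$. I would first reduce each candidate quartic modulo $8$ after normalizing $\gcd(M,e)=1$. Only where that is inconclusive would I invoke Hensel-type arguments modulo $32$ to decide local solvability.
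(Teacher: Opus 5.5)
\textbf{Step~1 is false as stated.} Your strategy is the paper's: local $2$-isogeny descent on both curves, then $w(E_{-2pq}/\bbQ)=-1$ and the Parity Conjecture. But the bound you aim for in Step~1 is false, so that step cannot be carried out. Local conditions do not cut $\alpha(E_{-2pq}(\bbQ))$ down to four classes.

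Suppose $\legendre{p}{q}=1$, so $\legendre{q}{p}=-1$. Take $b_1=p$, i.e.\ the quartic $N^2=pM^4-2qe^4$.
\begin{itemize}
\item It has a $2$-adic point with $M=e=1$, because $p-2q\equiv 1\pmod 8$.
\item It has a $p$-adic point with $M=0$, because $\legendre{-2q}{p}=\legendre{-2}{p}\legendre{q}{p}=(-1)(-1)=1$.
\item It has a $q$-adic point with $e=0$, because $\legendre{p}{q}=1$.
\end{itemize}
Likewise $b_1=-q$ passes ($-q\equiv 1\pmod 8$, $\legendre{-q}{p}=1$, $\legendre{2p}{q}=1$). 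So does $b_1=2$ ($2-pq\equiv1\pmod 8$, $\legendre{2}{p}=\legendre{2}{q}=1$).

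Hence the local image is the order-$8$ group generated by $-2pq$, $2$, $p$, with $q$ in place of $p$ when $\legendre{p}{q}=-1$. This is Lemma~\ref{lem:alpha_case_1.4}. Contrary to your expectation, \emph{which} classes survive does depend on the sign of $\legendre{p}{q}$; only their number does not. Nor is this an artifact of the Selmer group. For $(p,q)=(7,23)$, the points $(18,6)$ and $(23,69)$ on $y^2=x^3-322x$ have $\alpha$-values $2$ and $23$, so $|\alpha(E_{-2pq}(\bbQ))|=8$.

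\textbf{How the argument is rescued.} The isogenous side is smaller than your target. The sign flip $\legendre{q}{p}=-\legendre{p}{q}$ that you observed kills $b_1=p$ for $N^2=pM^4+8qe^4$: the $q$-adic and $p$-adic conditions would need $\legendre{p}{q}=1$ and $\legendre{q}{p}=1$ at the same time. Alternatively, $M$ odd and reduction mod $8$ force $p\equiv1\pmod 8$, which fails. The same argument kills $q$, $2p$, $2q$. The classes $2$ and $pq$ survive, since $pq\equiv 1\pmod 8$ and $\legendre{2}{p}=\legendre{2}{q}=1$.

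So $|\overline{\alpha}(\overline{E_{-2pq}}(\bbQ))|\le 4$ (Lemma~\ref{lem:oalpha_1.4}). The correct split is $8\cdot 4$, not your $4\cdot 8$. The product is unchanged, and Proposition~\ref{prop:2.1} gives $r\le 3$, exactly as in the paper. Even with only your bound of $8$ on the isogenous side, you would get $r\le 4$, which parity still reduces to $r\in\{1,3\}$.

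\textbf{Two minor points.} No explicit point is needed, since odd rank already forces $r\ge 1$. And $w=-1$ holds because $D=2pq>0$ with $2\parallel D$ and $D$ square-free (Lemma~\ref{lem:root}), not because $pq\equiv1\pmod 8$.
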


The following table verifies the hypotheses of Theorems \ref{thm:1.1}-\ref{thm:1.4}.

\begin{table}[h]
\centering
\scriptsize
\setlength{\tabcolsep}{2pt}
\begin{tabular*}{\textwidth}{@{\extracolsep{\fill}}ccc ccc ccc ccc ccc ccc@{}}
\hline
\multicolumn{9}{c}{Rank $1$ examples} &
\multicolumn{9}{c}{Rank $3$ examples} \\
\hline
$p$ & $q$ & Rank & $p$ & $q$ & Rank & $p$ & $q$ & Rank &
$p$ & $q$ & Rank & $p$ & $q$ & Rank & $p$ & $q$ & Rank \\
\hline
$3$  & $5$  & $1$ & $3$  & $7$  & $1$ & $3$  & $11$ & $1$ &
$3$  & $97$ & $3$ & $7$  & $23$ & $3$ & $13$ & $17$ & $3$ \\

$5$  & $7$  & $1$ & $5$  & $11$ & $1$ & $5$  & $13$ & $1$ &
$17$ & $43$ & $3$ & $17$ & $59$ & $3$ & $17$ & $73$ & $3$ \\

$7$  & $11$ & $1$ & $7$  & $13$ & $1$ & $7$  & $17$ & $1$ &
$17$ & $89$ & $3$ & $19$ & $73$ & $3$ & $23$ & $47$ & $3$ \\

$11$ & $13$ & $1$ & $11$ & $17$ & $1$ & $11$ & $19$ & $1$ &
$31$ & $71$ & $3$ & $41$ & $61$ & $3$ & $41$ & $73$ & $3$ \\

$13$ & $19$ & $1$ & $13$ & $23$ & $1$ & $13$ & $29$ & $1$ &
$47$ & $79$ & $3$ & $53$ & $97$ & $3$ & $61$ & $73$ & $3$ \\

$17$ & $19$ & $1$ & $17$ & $23$ & $1$ & $17$ & $29$ & $1$ &
$73$ & $97$ & $3$ & $79$ & $97$ & $3$ & $89$ & $97$ & $3$ \\
\hline
\end{tabular*}
\caption{Examples of ranks $1$ and $3$ for $E_{-2pq}: y^2=x^3-2pqx$, where $3\leq p<q\leq 97$.}
\label{tab:rank_one_three_examples}
\end{table}
In the next theorem, we provide analogous results of Theorem \ref{thm:1.3} without assuming the parity conjecture.

\begin{thm}\label{thm:1.5}
\begin{enumerate}
    \item[(i)]  Let $(p,q) \equiv (3,3) \pmod 8 $ such that $(pq-32)$ is a perfect square. Then $\operatorname{rank}(E_{-2pq}(\bbQ))=1$ for $\legendre{p}{q}=1$ and $\legendre{p}{q}=-1$.
    \item[(ii)] If $(p,q)\equiv(3,5)\pmod 8$, then
$\operatorname{rank}(E_{-2pq}(\bbQ))=1$ whenever
\[
\begin{array}{ll}
\legendre{p}{q}=1 & \text{and } p-2q \text{ is a perfect square or},\\
\legendre{p}{q}=-1 & \text{and } 2p-q \text{ is a perfect square}.
\end{array} 
\] holds.
\item[(iii)] If $(p,q)\equiv(5,3)\pmod 8$, then
$\operatorname{rank}(E_{-2pq}(\bbQ))=1$ whenever
\[
\begin{array}{ll}
\legendre{p}{q}=1
& \text{and } q-2p \text{ is a perfect square, or}\\
\legendre{p}{q}=-1
& \text{and } 2q-p \text{ is a perfect square}.
\end{array}
\] holds.
\item[(iv)]
If $(p,q)\equiv(3,7)\pmod 8$, then
$\operatorname{rank}(E_{-2pq}(\bbQ))=1$ whenever
\[
\begin{array}{ll}
\legendre{p}{q}=1
& \text{and } 32p-q \text{ is a perfect square, or}\\
\legendre{p}{q}=-1
& \text{and } q-2p \text{ is a perfect square}.
\end{array}
\] holds.
\item[(v)]
If $(p,q)\equiv(7,3)\pmod 8$, then
$\operatorname{rank}(E_{-2pq}(\bbQ))=1$ whenever
\[
\begin{array}{ll}
\legendre{p}{q}=1
& \text{and } p-2q \text{ is a perfect square, or}\\
\legendre{p}{q}=-1
& \text{and } 32q-p \text{ is a perfect square}.
\end{array}
\] holds.
\item[(vi)]
If $(p,q)\equiv(5,7)\pmod 8$, then
$\operatorname{rank}(E_{-2pq}(\bbQ))=1$ whenever
\[
\begin{array}{ll}
\legendre{p}{q}=1
& \text{and } 32p-q \text{ is a perfect square, or}\\
\legendre{p}{q}=-1
& \text{and } 2q-p \text{ is a perfect square}.
\end{array}
\] holds.
\item[(vii)] 
If $(p,q)\equiv(7,5)\pmod 8$, then
$\operatorname{rank}(E_{-2pq}(\bbQ))=1$ whenever
\[
\begin{array}{ll}
\legendre{p}{q}=1
& \text{and } 2p-q \text{ is a perfect square, or}\\
\legendre{p}{q}=-1
& \text{and } 32q-p \text{ is a perfect square}.
\end{array}
\] holds.
\item[(viii)]
If $(p,q)\equiv(5,5)\pmod 8$ and $2pq-1$ is a perfect square, then
\[
\operatorname{rank}(E_{-2pq}(\bbQ))=1
\]
for both $\legendre{p}{q}=1$ and $\legendre{p}{q}=-1$.
\end{enumerate}
\end{thm}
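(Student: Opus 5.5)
The plan is to pin the rank between an upper bound coming from the $2$-isogeny descent and a lower bound coming from an explicit rational point. The rank must be obtained unconditionally, so the upper bound has to come from the descent itself, not from parity. Write $E=E_{-2pq}$ and let $E':y^2=x^3+8pqx$ be its $2$-isogenous curve. Let $\alpha:E(\bbQ)\to\bbQ^\times/\bbQ^{\times2}$ and $\alpha':E'(\bbQ)\to\bbQ^\times/\bbQ^{\times2}$ be the usual connecting homomorphisms. Then
\[
\operatorname{rank}E(\bbQ)=\log_2|\alpha(E(\bbQ))|+\log_2|\alpha'(E'(\bbQ))|-2 .
\]

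\textbf{Upper bound.} I will reuse the local analysis that the paper must carry out for Theorem \ref{thm:1.3}. That analysis treats the quartics $N^2=b_1M^4+b_2e^4$ with $b_1b_2=-2pq$ for $E$, and with $b_1b_2=8pq$ for $E'$, with $b_1$ running over the divisors of $2pq$ up to sign. For each residue pair $(p,q)\bmod 8$ listed in Theorem \ref{thm:1.5}, the real place, the $2$-adic conditions, and the Legendre symbol conditions at $p$ and $q$ exclude all but a few classes. I expect this to give $|\alpha(E(\bbQ))|\cdot|\alpha'(E'(\bbQ))|\le 8$, that is, the Selmer bound $\operatorname{rank}\le 1$. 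This part is pure local bookkeeping, and it is exactly what yields $\operatorname{rank}\le1$ in Theorem \ref{thm:1.3} before parity is invoked, so it needs no new input. I will quote it, and record in each case which nontrivial classes survive.

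\textbf{Lower bound.} Each perfect-square hypothesis should supply a global solution of one of the surviving quartics whose class $b_1$ is not in the image of torsion. Since $E(\bbQ)_{\rm tors}=\{O,(0,0)\}$, the torsion contributes only the classes $1$ and $-2pq$ to $\alpha$, and similarly only $1$ and $8pq\equiv 2pq$ to $\alpha'$. For example, in (i) we have $pq-32=N^2$. Taking $b_1=pq$, $b_2=-2$, $M=1$, $e=2$ gives $N^2=pq\cdot1^4-2\cdot2^4$. This produces the point $(x,y)=(pqM^2/e^2,\,pqMN/e^3)$ on $E$, and one checks directly that it satisfies $y^2=x^3-2pqx$. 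Its image under $\alpha$ is $pq$, which is neither $1$ nor $-2pq$ modulo squares. Hence the point has infinite order and $\operatorname{rank}\ge1$.

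The remaining items are handled the same way. For "$p-2q$ square" one uses $b_1=p$, $b_2=-2q$ on $E$. For "$32p-q$ square" one uses $b_1=2p$, $b_2=-q$ with $M=2$, $e=1$, giving $2p\cdot16-q$. For (viii) one uses $b_1=2pq$, $b_2=-1$ with $M=e=1$, giving $2pq-1$. The $E'$ quartics are available should a case need them. For each item I will verify three things: that the chosen class is one of the classes the local analysis left alive, that the relevant Legendre condition $\legendre{p}{q}=\pm1$ is the one under which that class survives, and that the class is not torsion. The swaps between $\legendre{p}{q}=1$ and $-1$ in the statement should correspond exactly to which of $p$ and $2p$ (respectively $q$ and $2q$) survive the local test at the other prime.

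The main obstacle is the upper bound. I must make sure that the descent, and not merely parity, already forces $\operatorname{rank}\le1$ in every listed congruence class. In particular, potential elements of $\Sha$ must be excluded by local obstructions alone, and the $2$-adic solvability analysis for $b_1\in\{\pm1,\pm2,\pm p,\pm2p,\dots\}$ is the delicate part. If some case leaves a Selmer group of size giving bound $3$, I would first try to show that the explicit point already accounts for one of the extra classes and that a second surviving class is obstructed by a Legendre symbol computation at $p$ or $q$.
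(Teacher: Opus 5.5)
Your proposal is correct and is essentially the paper's argument: the descent lemmas give $|\alpha(E_{-2pq}(\bbQ))|\le 4$ and $|\overline{\alpha}(\overline{E_{-2pq}}(\bbQ))|=2$ in all eight classes and for both values of $\legendre{p}{q}$, and each perfect-square hypothesis supplies an explicit torsor solution. Your classes $pq$, $2p$, $2pq$ are the complements (multiply by $-2pq$) of the paper's $-2$, $-q$, $-1$, and your ``point of infinite order'' phrasing is equivalent to the paper's $|\alpha|=4$ plus Proposition~\ref{prop:2.1}.

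Your planned check that the chosen class survives under the stated Legendre condition is logically unnecessary, and in (vii) it will fail. There $2p-q=N^2$ forces $\legendre{-q}{p}=1$, i.e.\ $\legendre{p}{q}=-1$, while $32q-p=N^2$ forces $\legendre{p}{q}=1$. So both branches of (vii) as stated are vacuous; the Legendre labels are swapped (cf.\ Table~\ref{tab:10}). Since the descent bound holds for either sign, your argument still proves the statement as written.
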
 

\section{Preliminaries}

To compute the rank of the curve given by \eqref{eq:1.1}, we will use the method
of $2$-descent. We will describe it briefly here. See \cite[Chapter~III, Sections~4--6]{ST15}
for more details. Suppose that $E: Y^{2}= X^{3} + aX^{2} + bX$ is an elliptic curve
over $\bbQ$ and $\overline{E}: Y^{2} = X^{3} - 2aX^{2} + \left( a^{2}- 4b \right)X$
is the corresponding isogenous curve to $E$. Hence, there exists an isogeny
$\phi: E \rightarrow \overline{E}$ of degree $2$ given by
\[
\phi(x,y) = \left( \frac{y^{2}}{x^{2}}, \frac{y(x^{2}-b)}{x^{2}} \right).
\]

Moreover, let $\bbQ^{*}$ be the multiplicative group of all non-zero rational numbers,
and ${\bbQ^{*}}^{2}$ be its subgroup of squares of elements of $\bbQ^{*}$. Hence,
$\bbQ^{*}/{\bbQ^{*}}^{2}$ is the multiplicative group of all non-zero rational
numbers modulo squares. Now we define the $2$-descent
homomorphism $\alpha: E(\bbQ) \rightarrow \bbQ^{*}/{\bbQ^{*}}^{2}$ by
\[
\alpha(P)= 
\begin{cases}
1 \pmod{{\bbQ^{*}}^{2}}, ~\text{if}~ P= \cO, ~\text{the point at infinity},\\
b \pmod{{\bbQ^{*}}^{2}}, ~\text{if}~ P= (0,0),\\
x \pmod{{\bbQ^{*}}^{2}}, ~\text{if}~ P= (x,y) ~\text{with}~ x \neq 0.
\end{cases}
\]

Similarly, we can define the $2$-decent homomorphism on the isogenous curve
$\overline{E}(\bbQ)$ as follows:
$\overline{\alpha}: \overline{E}(\bbQ) \rightarrow \bbQ^{*}/{\bbQ^{*}}^{2}$ by
\[
\overline{\alpha}(\overline{P})
=\begin{cases}
1 \pmod {{\bbQ^{*}}^{2}}, ~\text{if}~ \overline{P}= \overline{\cO}, ~\text{the point at infinity},\\
\overline{b} \pmod {{\bbQ^{*}}^{2}}, ~\text{if}~ \overline{P}= (0,0),\\
x \pmod {{\bbQ^{*}}^{2}}, ~\text{if}~ \overline{P}= (x,y) ~\text{with}~ x \neq 0,
\end{cases}
\]
where $\overline{b}= a^{2} - 4b$.

The group $\alpha(E(\bbQ))$ consists of $1,b$ and all factors $b_{1} \neq 1,b$
of $b$ taken modulo ${\bbQ^{*}}^{2}$ that satisfy the following conditions.
For $b_{1} \neq 1, ~\text{or}~ b \pmod {{\bbQ^{*}}^{2}}$,
there exists a triple $(N, M, e) \in \bbZ^{3}$, where $M \neq 0, e \neq 0$ is a
solution of the Diophantine equation (or 'torsor')
\[
\cT: N^{2} = b_{1}M^{4} + aM^{2}e^{2} + b_{2}e^{4}, ~\text{with}~  b_{1} b_{2} =b,
\]
and satisfies the following criterion:
\begin{equation}
\label{eq:gcd}
\gcd(N,e)=\gcd(M,e)=\gcd \left( b_{1}, e \right)=\gcd \left( b_{2}, M \right)
= \gcd(M,N)=1.
\end{equation}

Similarly, the group $\overline{\alpha} \left( \overline{E}(\bbQ) \right)$
consists of $1, a^{2}-4b$ and all factors $b_{1}$ of $a^{2}-4b$ taken modulo
${\bbQ^{*}}^{2}$ that satisfy the following conditions.
For $b_{1} \neq 1, ~\text{or}~ a^{2}-4b \pmod{{\bbQ^{*}}^{2}}$,
there exists a triple $(N, M, e) \in \bbZ^{3}$,
where $M \neq 0, e \neq 0$ is a solution of the Diophantine equation (or 'torsor'),
\[
\overline{\cT}: N^{2} = b_{1} M^{4} -2a M^{2} e^{2} + b_{2} e^{4}, ~\text{with}~  b_{1} b_{2} =a^{2}-4b,
\]
and the same gcd criterion as in \eqref{eq:gcd} above holds.

Now, to compute the rank of an elliptic curve, $E$, of the above form, we use the
following proposition (see the displayed equation at the bottom of page~91 of
\cite{ST15}).

\begin{proposition}
\label{prop:2.1}
Let $r$ be the rank of $E(\bbQ)$, where $E$, $\alpha$ and $\overline{\alpha}$
are as above. Then,
\[
\frac{1}{4} |\alpha(E(\bbQ))| \cdot \left| \overline{\alpha} \left( \overline{E}(\bbQ) \right) \right|
= 2^{r}.
\]
\end{proposition}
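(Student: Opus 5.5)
The plan is to follow the standard descent-via-isogeny argument of \cite[Chapter~III]{ST15}. Let $\psi:\overline{E}\to E$ be the dual isogeny, $\psi(x,y)=\bigl(y^2/(4x^2),\, y(x^2-\overline{b})/(8x^2)\bigr)$, so that $\psi\circ\phi=[2]$ on $E$ and $\phi\circ\psi=[2]$ on $\overline{E}$.

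The first step is to verify that $\alpha$ and $\overline{\alpha}$ are group homomorphisms. The point is that three collinear points have $x$-coordinates whose product is a square, with the points $\cO$ and $(0,0)$ handled separately. I then identify their kernels:
\[
\ker\alpha=\psi\bigl(\overline{E}(\bbQ)\bigr), \qquad \ker\overline{\alpha}=\phi\bigl(E(\bbQ)\bigr).
\]
For $\ker\alpha$, a point $(x,y)$ with $x=t^2$ lies in the image of $\psi$, and one writes down an explicit preimage. The case $P=(0,0)$ lies in the kernel exactly when $b$ is a square, which matches $(0,0)\in\psi(\overline{E}(\bbQ))$. Consequently
\[
|\alpha(E(\bbQ))|=[E(\bbQ):\psi(\overline{E}(\bbQ))], \qquad |\overline{\alpha}(\overline{E}(\bbQ))|=[\overline{E}(\bbQ):\phi(E(\bbQ))],
\]
and both indices are finite because the images lie in the finite set of square classes of divisors of $b$, respectively of $\overline{b}$.

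Next I compute $[E(\bbQ):2E(\bbQ)]$ in two ways. By the Mordell--Weil theorem,
\[
[E(\bbQ):2E(\bbQ)]=2^r\,|E(\bbQ)[2]|.
\]
On the other hand, since $2E(\bbQ)=\psi\phi E(\bbQ)$,
\[
[E(\bbQ):2E(\bbQ)]=[E(\bbQ):\psi\overline{E}(\bbQ)]\cdot[\psi\overline{E}(\bbQ):\psi\phi E(\bbQ)].
\]
The second factor is handled by a small lemma: for a homomorphism $\psi$ and a subgroup $H=\phi E(\bbQ)\subseteq G=\overline{E}(\bbQ)$, we have $[\psi G:\psi H]=[G:H]/[\ker\psi:\ker\psi\cap H]$. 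Here $\ker\psi=\{\overline{\cO},(0,0)\}$.

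The main obstacle is the bookkeeping that the resulting factors exactly cancel, which I would do by cases on whether $\overline{b}=a^2-4b$ is a rational square.

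1. Suppose $\overline{b}$ is not a square. Then $x^2+ax+b$ has no rational root, so $|E(\bbQ)[2]|=2$. Also $(0,0)\notin\phi E(\bbQ)$, since $\overline{\alpha}((0,0))=\overline{b}\neq 1$. The lemma's correction factor is therefore $2$, and we get
\[
2^r\cdot 2=|\alpha|\cdot|\overline{\alpha}|/2.
\]

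2. Suppose $\overline{b}$ is a square. Then $|E(\bbQ)[2]|=4$, and $(0,0)\in\phi E(\bbQ)$, being the image of a nontrivial $2$-torsion point $(x_0,0)$ with $x_0\neq 0$. The correction factor is $1$, giving
\[
2^r\cdot 4=|\alpha|\cdot|\overline{\alpha}|.
\]

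In both cases we obtain $\tfrac14|\alpha(E(\bbQ))|\,|\overline{\alpha}(\overline{E}(\bbQ))|=2^r$. The characterization of the images by solvability of the torsors $\cT$ and $\overline{\cT}$ is the separate step recorded above the proposition and is not needed for the counting identity itself.
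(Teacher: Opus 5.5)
Your proof is correct and follows essentially the same route as the paper. The paper gives no argument of its own and simply cites the descent-via-isogeny computation in Silverman--Tate, Chapter~III, which consists of exactly your steps: the kernel identifications $\ker\alpha=\psi(\overline{E}(\bbQ))$ and $\ker\overline{\alpha}=\phi(E(\bbQ))$, the index lemma for $\psi$ with $\ker\psi=\{\overline{\cO},(0,0)\}$, and the case split on whether $a^2-4b$ is a square, with $|E(\bbQ)[2]|\in\{2,4\}$ compensating the correction factor.
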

Next, we recall another concept called {\it root number}  for an elliptic curve, $E$, defined over $\bbQ$. It is well-known that there is an $L$-function associated with it, $L(E/\bbQ,s)$. It satisfies the functional equation
\[
L^{*}(E/\bbQ,2-s)=w(E/\bbQ)L^{*}(E/\bbQ,s),
\]
where $w(E/\bbQ)=\pm 1$ is called the \emph{global root number}. The global root number $w(E/\bbQ)$ is defined as the product of \emph{local root numbers}
$w \left( E/\bbQ_{v} \right) \in \{ \pm 1 \}$,
\[
w(E/\bbQ) = \prod_{v} w \left( E/\bbQ_{v} \right),
\]
where the product runs over all places $v$ of $\bbQ$, including the infinite one.
For more information about root numbers, see \cite{CK-D}. We list the following lemmas, which will be useful to prove the main results.

\begin{lem}
\label{lem:root}
Let $p$ and $q$ be distinct odd primes. Then
\[
w\left(E_{-2pq}/\bbQ\right)=-1.
\]
\end{lem}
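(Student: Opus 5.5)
The plan is to compute $w(E_{-2pq}/\bbQ)$ as a product of local root numbers, $w(E_{-2pq}/\bbQ)=w_\infty\, w_2\, w_p\, w_q$. No other place contributes, since $\Delta(E_{-2pq})=2^{6}(2pq)^3=2^{9}p^3q^3$ and the curve has good reduction away from $2,p,q$.

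\textbf{The archimedean place.} For every elliptic curve over $\bbQ$ we have $w_\infty=-1$. So the lemma is equivalent to the identity $w_2\,w_p\,w_q=+1$.

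\textbf{The odd primes $p$ and $q$.} Take $\ell\in\{p,q\}$. Since $\ell\parallel 2pq$, the model $y^2=x^3-2pqx$ is minimal at $\ell$, with $v_\ell(c_4)=1$ and $v_\ell(\Delta)=3$. Hence the reduction is additive of Kodaira type III, and becomes good over an extension of ramification degree $e=12/\gcd(12,v_\ell(\Delta))=4$. I will invoke Rohrlich's formula for potentially good reduction at $\ell\geq 5$: in the case $e=4$ it gives $w_\ell=\legendre{-2}{\ell}$. For $\ell=3$ I will check the same value against the table in \cite{CK-D}, or directly via Halberstadt's tables. The product of the odd contributions is then
\[
w_pw_q=\legendre{-2}{p}\legendre{-2}{q}=\legendre{-2}{pq}.
\]
This equals $+1$ when $pq\equiv 1,3 \pmod 8$ and $-1$ when $pq\equiv 5,7\pmod 8$.

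\textbf{The prime $2$, which I expect to be the main obstacle.} Here the reduction is wild and additive, so $w_2$ must be read off from Halberstadt's table or from the known explicit formula for curves $y^2=x^3-Dx$ (Birch--Stephens/Connell). For $D=2m$ with $m=pq$ odd, that formula expresses $w_2$ in terms of $v_2(D)=1$ and the class of $m$ modulo $8$ or $16$. The goal is to verify
\[
w_2=\legendre{-2}{m}, \qquad m=pq.
\]
I will carry this out by running through the four classes $m\equiv 1,3,5,7\pmod 8$, refining to mod $16$ if the table requires it. As a sanity check I will compare with small examples where the rank is known: for $(p,q)=(3,5)$ the table in the introduction gives rank $1$, so $w=-1$ is forced.

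\textbf{Conclusion.} Granting the identity at $2$, we get $w_2w_pw_q=\legendre{-2}{pq}^2=1$, and therefore $w(E_{-2pq}/\bbQ)=-1$.

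The delicate point is the correct normalization of the $2$-adic formula. The model is minimal at $2$, since $v_2(c_4)=5$ is less than $8$. But Halberstadt's entries depend on $(v_2(c_4),v_2(c_6),v_2(\Delta))=(5,\geq 6,9)$ together with residues of $c_4/2^5$ and $c_6/2^{v}$ modulo small powers of $2$. So I will first compute these invariants explicitly, then do the casework on $pq \bmod 16$.
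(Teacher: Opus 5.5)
Your route is different from the paper's, and it is sound, but the step that carries the content is not yet done. The paper does no local analysis. It quotes the Birch--Stephens closed formula for the sign of $y^2=x^3-Dx$, which consists of $\sgn(-D)$, a factor depending only on $D \bmod 16$, and $\legendre{-1}{\ell}$ for each odd $\ell$ with $\ell^2\parallel D$. For $D=2pq$ only $\sgn(-D)=-1$ survives.

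You instead factor $w$ into genuine local root numbers. This is more transparent, but it moves all the work into a wild $2$-adic computation. Your values away from $2$ are right:
\begin{itemize}
\item $w_\infty=-1$;
\item at $\ell\in\{p,q\}$ the reduction is type III with $e=4$, tame even for $\ell=3$, so $w_\ell=\legendre{-2}{\ell}$ (in particular $w_3=1$).
\end{itemize}
Hence the lemma is equivalent to $w_2=\legendre{-2}{pq}$. That identity is true, but you only announce it as a goal and defer it to a table lookup you do not carry out. As written, the argument stops exactly at its one non-routine input.

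Do not take the value at $2$ from the paper's wording. Its proof calls the Birch--Stephens congruence factor ``$w(E_D/\bbQ_2)$'' and sets it to $1$ for even $D$, with no factors at $p$ and $q$. Compared with your decomposition, that factor is really $w_2w_pw_q$: the symbols $\legendre{-2}{\ell}$ for $\ell\parallel D$ are absorbed into the congruence condition on $D$. If you used $w_2=1$ literally in your product, you would get $w=-\legendre{-2}{pq}=+1$ for $(p,q)=(3,5)$. But for $y^2=x^3-30x$, the paper's own descent bounds together with the point $(-5,5)$ give rank $1$ and trivial $\Sha[2]$. So $w=-1$ by Monsky's $2$-parity theorem.

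To complete your version, note that $E_{-2pq}$ over $\bbQ_2$ depends only on the class of $2pq$ modulo $\bbQ_2^{*4}$, i.e.\ on $pq \bmod 16$. You then have three ways to finish:
\begin{itemize}
\item read $w_2$ from Halberstadt's table using the invariants you computed;
\item calibrate on $y^2=x^3-2mx$ for $m=1,3,\dots,15$: their $\phi$-Selmer groups are easy to compute, $2$-parity gives their global signs, and Rohrlich's formula gives the odd local factors, so in every class one finds $w_2=\legendre{-2}{m}$;
\item cite the global Birch--Stephens formula as the paper does, which makes the local analysis unnecessary.
\end{itemize}

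Two minor points: minimality at $2$ follows from $v_2(\Delta)=9<12$, not from $v_2(c_4)<8$; and $v_3(c_4)=2$, not $1$, when $3\in\{p,q\}$.
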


\begin{proof}
Put $D=2pq$, so that $E_{-2pq}$ is the curve $E_D:y^2=x^3-Dx$.
By the formula of Birch and Stephens \cite[(9) and (10), p.~296]{B-S},
\[
w(E_D/\bbQ)=w(E_D/\bbR)w(E_D/\bbQ_2)
\prod_{\ell^2\parallel D}w(E_D/\bbQ_\ell).
\]
Since $D>0$, we have $w(E_D/\bbR)=\sgn(-D)=-1$. Moreover,
$D=2pq$ is even, and hence it is not congruent to
$1,3,11,$ or $13\pmod{16}$; therefore $w(E_D/\bbQ_2)=1$.
Finally, $D$ is square-free, so there is no odd prime $\ell$ satisfying
$\ell^2\parallel D$, and the product is empty. Consequently,
$w(E_{-2pq}/\bbQ)=-1$.
\end{proof}

\begin{conj}[Parity Conjecture]
\label{conj:parity}
Let $E/\bbQ$ be an elliptic curve defined over $\bbQ$. Then
\[
(-1)^{\rank(E(\bbQ))}=w(E/\bbQ).
\]
\end{conj}

\begin{lem}
\label{lem:parity}(\cite{D-D},  Theorem~1.2)
Let $E/\bbQ$ be an elliptic curve defined over $\bbQ$. If the Tate-Shafarevich
group, $\Sha$ $(E/\bbQ)$, is finite, then the Parity Conjecture holds.
\end{lem}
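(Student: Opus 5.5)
Since Lemma~\ref{lem:parity} is quoted from \cite{D-D}, my plan is to reduce it to a $p$-parity statement and indicate where the deep input lies, not to reprove that input from scratch. Fix a prime $\ell$; in fact $\ell=2$ suffices. Let $s_\ell(E/\bbQ)$ be the $\bbZ_\ell$-corank of the $\ell^\infty$-Selmer group $\mathrm{Sel}_{\ell^\infty}(E/\bbQ)$. The basic exact sequence
\[
0\to E(\bbQ)\otimes\bbQ_\ell/\bbZ_\ell \to \mathrm{Sel}_{\ell^\infty}(E/\bbQ)\to \Sha(E/\bbQ)[\ell^\infty]\to 0
\]
shows that $s_\ell(E/\bbQ)=\rank E(\bbQ)+\operatorname{corank}_{\bbZ_\ell}\Sha(E/\bbQ)[\ell^\infty]$.

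If $\Sha(E/\bbQ)$ is finite, then $\Sha(E/\bbQ)[\ell^\infty]$ is a finite group, so its $\bbZ_\ell$-corank is $0$. Hence $s_\ell(E/\bbQ)=\rank E(\bbQ)$, and the Parity Conjecture~\ref{conj:parity} reduces to the $\ell$-parity statement
\[
(-1)^{s_\ell(E/\bbQ)}=w(E/\bbQ).
\]
This step is purely formal.

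The substantive step is the $\ell$-parity statement itself, which is the content of \cite[Theorem~1.2]{D-D}. For $\ell=2$ over $\bbQ$ I would follow Monsky's approach and its refinement by the Dokchitsers. First, express $s_2$ modulo $2$ through the $2$-Selmer group, using the Cassels--Tate pairing to see that the non-divisible part of $\Sha[2^\infty]$ has square order. Second, write the parity of the Selmer rank as a sum of local terms. When $E$ has a rational $2$-torsion point, as all curves $E_{-2pq}$ do, this uses the $2$-isogeny $\phi:E\to\overline{E}$ of the Preliminaries. Cassels' formula gives
\[
(-1)^{s_2(E/\bbQ)}=\prod_v (-1)^{\ord_2\left(|\operatorname{coker}(\phi_v)|/|\ker(\phi_v)|\right)},
\]
where $\phi_v$ is the map induced by $\phi$ on $E(\bbQ_v)$. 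Third, compare each local factor with the local root number $w(E/\bbQ_v)$ place by place, and take the product over all places $v$.

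I expect the main obstacle to be this local comparison at $v=2$ and at places of additive reduction. There the local root numbers need the explicit formulas of Rohrlich, or of Birch--Stephens \cite{B-S} for curves $y^2=x^3-Dx$, and the local cokernel sizes have to be computed by hand. For our curves I would first try to verify the local identity directly, using the case analysis in $D\bmod 16$ already used in Lemma~\ref{lem:root}. In the general case (no rational isogeny) I would instead follow \cite{D-D}: pass to $\bbQ(E[2])$ and use regulator constants of Brauer relations for $S_3$ to reduce to the isogeny case. Finally, I would combine the global identity $(-1)^{s_2}=\prod_v w(E/\bbQ_v)=w(E/\bbQ)$ with $s_2=\rank E(\bbQ)$ to conclude.
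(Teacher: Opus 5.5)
Your reduction is correct and agrees with the paper, which gives no proof beyond quoting \cite{D-D}: finiteness of $\Sha(E/\bbQ)$ makes the $\bbZ_\ell$-corank of $\Sha(E/\bbQ)[\ell^\infty]$ vanish, so $s_\ell(E/\bbQ)=\rank E(\bbQ)$, and all the depth lies in the $\ell$-parity theorem you cite. One caution on your optional sketch: the local factors $(-1)^{\ord_2(|\operatorname{coker}\phi_v|/|\ker\phi_v|)}$ agree with $w(E/\bbQ_v)$ only up to Hilbert-symbol corrections whose product over all $v$ is $1$, not place by place; for example, for $E_{-2pq}$ at a prime $p\geq 5$ both $E$ and $\overline{E}$ have type III reduction with Tamagawa number $2$, so the factor is $+1$ while $w(E/\bbQ_p)=\legendre{-2}{p}$, and a direct place-by-place check for these curves would therefore fail.
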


\begin{lem}
\label{lem:kolyvagin}
Let $E/\bbQ$ be an elliptic curve defined over $\bbQ$. If
$\ord_{s=1}L(E/\bbQ,s) \leq 1$, then $\rank(E(\bbQ))=\ord_{s=1}L(E/\bbQ,s)$ and  $\Sha$ $(E/\bbQ)$, is finite.
\end{lem}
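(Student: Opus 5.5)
This is the Gross--Zagier--Kolyvagin theorem, and I would prove it by assembling known deep results rather than by descent. The plan is to reduce to the existence of a suitable imaginary quadratic field $K$ with a Heegner point of infinite order. Then I would apply Kolyvagin's Euler system bound over $K$ and descend back to $\bbQ$ using the action of complex conjugation.

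First, by the modularity theorem (Wiles, Taylor--Wiles, Breuil--Conrad--Diamond--Taylor), $E/\bbQ$ is modular of some conductor $N$. Hence $L(E/\bbQ,s)$ has analytic continuation and the functional equation recalled above, so $\ord_{s=1}L(E/\bbQ,s)$ makes sense. Since $w(E/\bbQ)=(-1)^{\ord_{s=1}L(E/\bbQ,s)}$, the hypothesis splits into two cases:
\begin{itemize}
\item $L(E,1)\neq 0$ with $w(E/\bbQ)=+1$;
\item $L(E,1)=0$, $L'(E,1)\neq 0$ with $w(E/\bbQ)=-1$.
\end{itemize}

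Next I would choose an imaginary quadratic field $K$ with the following properties:
\begin{itemize}
\item every prime dividing $N$ splits in $K$ (the Heegner hypothesis);
\item $\ord_{s=1}L(E/K,s)=1$.
\end{itemize}
Since $L(E/K,s)=L(E,s)L(E^{K},s)$, where $E^{K}$ is the quadratic twist, this amounts to requiring $L'(E^K,1)\neq0$ in the first case and $L(E^K,1)\neq 0$ in the second. The existence of such $K$ follows from the nonvanishing theorems for quadratic twists of Bump--Friedberg--Hoffstein and Murty--Murty, combined with the Waldspurger-type analysis. The Heegner hypothesis forces the root number of $E/K$ to be $-1$, so the twist parity is compatible with what we need.

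With $K$ fixed, I would take the Heegner point $P_K\in E(K)$ obtained from a modular parametrization $X_0(N)\to E$. The Gross--Zagier formula expresses $L'(E/K,1)$ as a nonzero multiple of the N\'eron--Tate height $\hat h(P_K)$. Hence $L'(E/K,1)\neq0$ implies that $P_K$ has infinite order.

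Kolyvagin's Euler system of Heegner points over ring class fields then shows two things:
\begin{itemize}
\item $E(K)$ has rank exactly $1$, generated up to finite index by $P_K$;
\item $\Sha(E/K)$ is finite.
\end{itemize}

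Finally I would descend to $\bbQ$. Complex conjugation acts on $E(K)\otimes\bbQ$, and the eigenspaces are $E(\bbQ)\otimes\bbQ$ and $E^K(\bbQ)\otimes\bbQ$. The point $P_K$ lies in the eigenspace of sign $-w(E/\bbQ)$. So in the first case $P_K$ comes from $E^K$ and $\rank E(\bbQ)=0$, while in the second case $\rank E(\bbQ)=1$; in both cases this equals $\ord_{s=1}L(E/\bbQ,s)$. Finiteness of $\Sha(E/\bbQ)$ follows because the restriction map $\Sha(E/\bbQ)\to\Sha(E/K)$ has kernel killed by $2$ and contained in the finite group $H^1(\mathrm{Gal}(K/\bbQ),E(K))$.

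The main obstacle is clearly Kolyvagin's Euler system argument, which bounds the $p^\infty$-Selmer groups for all $p$ using derivative classes built from Heegner points over ring class fields. The auxiliary nonvanishing of twists is also deep. In the paper I would not reprove either of these, but cite Gross--Zagier and Kolyvagin (together with modularity and the twist nonvanishing results) as the proof.
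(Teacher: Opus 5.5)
Your proposal is correct and takes essentially the paper's approach: the paper states this lemma without proof (and without an explicit citation), using it as the Gross--Zagier--Kolyvagin theorem together with modularity, which is the same package you cite. Your outline of the standard argument is accurate: choosing $K$ via the Bump--Friedberg--Hoffstein/Murty--Murty nonvanishing results, using Gross--Zagier to get a non-torsion Heegner point, applying Kolyvagin over $K$, and descending to $\bbQ$ through the complex-conjugation eigenspaces and the finite kernel of $\Sha(E/\bbQ)\to\Sha(E/K)$.
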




\section{\texorpdfstring{Rank of $E_{-2pq}$}{Rank of E_{-2pq}}}

\subsection{}

Throughout this section, we assume that $p\neq q$ are distinct odd primes. To
compute the rank of $E_{-2pq}$, we first determine
$\alpha(E_{-2pq}(\bbQ))$. By definition,
$1,-2pq\in\alpha(E_{-2pq}(\bbQ))$.

Since $\alpha(E_{-2pq}(\bbQ))$ is a subgroup of
$\bbQ^*/{\bbQ^*}^2$ containing $-2pq$, the two square classes $d$ and
$-2pq/d$ occur simultaneously. Moreover, the torsors corresponding to the
factorizations $-2pq=b_1b_2$ and $-2pq=b_2b_1$ are obtained from one another
by interchanging $M$ and $e$. Hence it is enough to choose one representative
from each complementary pair. We take
\[
T=\{-1,2,-2,q,-q,p,-p\}.
\]
The complementary classes are, respectively,
$2pq,-pq,pq,-2p,2p,-2q,$ and $2q$.

We therefore need to consider only the following seven torsors over the integers.

\begin{table}[h]
\centering
\[
\begin{array}{c@{\qquad}c@{\qquad}l}
b_1 & b_2 & \text{Torsor} \\
\hline
\noalign{\vskip 3pt}
-1 & 2pq  & \cT_1:\; N^2=-M^4+2pqe^4 \\
 2 & -pq  & \cT_2:\; N^2=2M^4-pqe^4 \\
-2 & pq   & \cT_3:\; N^2=-2M^4+pqe^4 \\
 q & -2p  & \cT_4:\; N^2=qM^4-2pe^4 \\
-q & 2p   & \cT_5:\; N^2=-qM^4+2pe^4 \\
 p & -2q  & \cT_6:\; N^2=pM^4-2qe^4 \\
-p & 2q   & \cT_7:\; N^2=-pM^4+2qe^4 \\
\hline
\end{array}
\]
\caption{Torsors for $E_{-2pq}$.}
\label{table:T-torsors}
\end{table}

\begin{lem}
\label{lem:T1}
There exist integer solutions for the torsor $\cT_{1}: N^{2}=-M^{4}+2pqe^{4}$ satisfying \eqref{eq:gcd} only if $p\equiv q\equiv 1 \pmod 4$ and $pq\equiv 1,~5 \pmod 8$.
\end{lem}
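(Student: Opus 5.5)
The plan is a purely local argument: reduce the torsor $\cT_1$ modulo $p$ and modulo $q$, and read off the congruence conditions from quadratic residuosity of $-1$. The condition modulo $8$ will then follow automatically, and a $2$-adic check gives a consistency test.

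First I would record what the gcd conditions \eqref{eq:gcd} give for $\cT_1$, where $b_1=-1$ and $b_2=2pq$. From $\gcd(b_2,M)=\gcd(2pq,M)=1$, the integer $M$ is odd and coprime to both $p$ and $q$. Suppose $(N,M,e)$ is a solution with $M\neq 0$, $e\neq 0$. Reducing $N^2=-M^4+2pqe^4$ modulo $p$ gives
\[
N^2\equiv -M^4 \pmod p .
\]
Since $p\nmid M$, the element $M^2$ is invertible modulo $p$, so $(N M^{-2})^2\equiv -1 \pmod p$. Hence $\legendre{-1}{p}=1$, which forces $p\equiv 1\pmod 4$. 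The identical argument modulo $q$ gives $q\equiv 1\pmod 4$.

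Next, $p\equiv q\equiv 1\pmod 4$ gives $pq\equiv 1\pmod 4$, so $pq\equiv 1$ or $5\pmod 8$. This is exactly the second condition in the statement, so no further work is needed for it.

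As a sanity check I would also verify that the $2$-adic reduction imposes no contradiction. Since $M$ is odd, $M^4\equiv 1\pmod{16}$.
\begin{itemize}
\item If $e$ were even, then $N^2\equiv -1\pmod{16}$, which is impossible. So $e$ must be odd.
\item With $e$ odd, $e^4\equiv 1\pmod{16}$, so $N^2\equiv 2pq-1\pmod{16}$.
\item For $N$ odd this requires $2pq-1\equiv 1\pmod 8$, that is, $pq\equiv 1\pmod 4$. This is the same condition as before.
\end{itemize}
I would mention this only as a remark, since the statement is merely a necessary condition.

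There is no real obstacle here. The only point needing care is justifying $p\nmid M$ and $q\nmid M$ from \eqref{eq:gcd}, specifically from $\gcd(b_2,M)=1$, before dividing by $M^2$ modulo $p$ and modulo $q$.
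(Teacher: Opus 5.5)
Your proposal is correct and is essentially the paper's proof. The paper also reduces $\cT_1$ modulo $p$ and $q$, using $p,q\nmid M$ from \eqref{eq:gcd}, to get $\legendre{-1}{p}=\legendre{-1}{q}=1$, and then runs exactly your mod-$16$ computation (forcing $e$ odd and $2pq-1\equiv 1,9\pmod{16}$). You are right that $pq\equiv 1,5\pmod 8$ just says $pq\equiv 1\pmod 4$, which is already implied by $p\equiv q\equiv 1\pmod 4$, so the paper's $2$-adic step adds no new information.
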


\begin{proof}
Reducing $\cT_{1}$ modulo $p$, we get $N^{2}\equiv -M^{4}\pmod p$. Since $p\nmid M$ by \eqref{eq:gcd}, we get $\left(\dfrac{-1}{p}\right)=1$, and hence $p\equiv 1\pmod 4$. Similarly, reducing $\cT_{1}$ modulo $q$, we get $\left(\dfrac{-1}{q}\right)=1$, and hence $q\equiv 1\pmod 4$.

Now reducing $\cT_{1}$ modulo $16$, from \eqref{eq:gcd}, we deduce that $M$ is odd and hence $N$ is odd. Thus $N^{2}\equiv 1,~9\pmod {16}$ and $M^{4}\equiv 1\pmod {16}$. If $e$ is even, then $e^{4}\equiv 0\pmod {16}$, and so $N^{2}\equiv -1\pmod {16}$, which is impossible. Hence $e$ is odd. Therefore $e^{4}\equiv 1\pmod {16}$, and using this in $N^{2}=-M^{4}+2pqe^{4}$, we get $2pq-1\equiv 1,~9\pmod {16}$. After simplification, we obtain $pq\equiv 1,~5\pmod 8$.
\end{proof}

\smallskip
\begin{lem}
\label{lem:T2}
There exist integer solutions for the torsor $\cT_{2}: N^{2}=2M^{4}-pqe^{4}$ satisfying \eqref{eq:gcd} only if $\left(\dfrac{2}{p}\right)=1$ and $\left(\dfrac{2}{q}\right)=1$. Equivalently, $p,q\equiv 1$ or $7\pmod 8$. Moreover, reducing modulo $16$, one obtains either $pq\equiv 1,~9\pmod {16}$ or $pq\equiv 7,~15\pmod {16}$.
\end{lem}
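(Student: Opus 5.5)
The proof will mirror that of Lemma \ref{lem:T1}: reduce $\cT_2$ modulo $p$ and modulo $q$ to get the Legendre symbol conditions, then reduce modulo $16$ to get the congruence for $pq$.

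\emph{Step 1: reduction modulo $p$ and $q$.} Reducing $N^2=2M^4-pqe^4$ modulo $p$ gives $N^2\equiv 2M^4 \pmod p$. We need $p\nmid M$. By \eqref{eq:gcd} we have $\gcd(b_2,M)=\gcd(-pq,M)=1$, so $p\nmid M$. It follows that $2\equiv (N/M^2)^2\pmod p$, hence $\legendre{2}{p}=1$. The same argument modulo $q$ gives $\legendre{2}{q}=1$. By the second supplementary law, $\legendre{2}{\ell}=1$ exactly when $\ell\equiv \pm1\pmod 8$, which gives the equivalent formulation $p,q\equiv 1$ or $7\pmod 8$.

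\emph{Step 2: parity of the variables.} Here $b_1=2$, so \eqref{eq:gcd} gives $\gcd(2,e)=1$, i.e.\ $e$ is odd. Then $N^2=2M^4-pqe^4$ is odd, so $N$ is odd.

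\emph{Step 3: reduction modulo $16$.} Since $e$ is odd, $e^4\equiv 1\pmod{16}$, and $N^2\equiv 1$ or $9\pmod{16}$. Consider the two possible parities of $M$.
\begin{itemize}
\item If $M$ is even, then $2M^4\equiv 0\pmod{16}$, so $-pq\equiv N^2\equiv 1,9\pmod{16}$, that is, $pq\equiv 15$ or $7\pmod{16}$.
\item If $M$ is odd, then $2M^4\equiv 2\pmod{16}$, so $2-pq\equiv 1,9\pmod{16}$, that is, $pq\equiv 1$ or $9\pmod{16}$.
\end{itemize}
These are exactly the two alternatives claimed in the statement.

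No step is a real obstacle. The only point needing care is justifying $p,q\nmid M$ and $e$ odd from \eqref{eq:gcd} through the roles of $b_1=2$ and $b_2=-pq$, rather than by appeal to the analogue of Lemma \ref{lem:T1}. As a consistency check, Step 1 already forces $pq\equiv \pm1\pmod 8$, which agrees with the mod $16$ alternatives.
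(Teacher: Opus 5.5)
Your proposal is correct and follows the paper's proof essentially step for step. It reduces modulo $p$ and $q$ using $p,q\nmid M$ from \eqref{eq:gcd}, then uses that $e$ is odd and splits modulo $16$ on the parity of $M$. Your explicit justification of $e$ odd via $\gcd(b_1,e)=\gcd(2,e)=1$, and of $N$ odd, is slightly more careful than the paper's wording, but the argument is the same.
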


\begin{proof}
Reducing $\cT_{2}$ modulo $p$, we get $N^{2}\equiv 2M^{4}\pmod p$. Since $p\nmid M$ by \eqref{eq:gcd}, we get $\left(\dfrac{2}{p}\right)=1$, and hence $p\equiv 1$ or $7\pmod 8$. Similarly, reducing $\cT_{2}$ modulo $q$, we get $\left(\dfrac{2}{q}\right)=1$, and hence $q\equiv 1$ or $7\pmod 8$.

Now, from \eqref{eq:gcd}, we have $e$ odd. Hence $e^{4}\equiv 1\pmod {16}$. If $M$ is odd, then $M^{4}\equiv 1\pmod {16}$, and reducing $\cT_{2}$ modulo $16$, we get $2-pq\equiv 1,~9\pmod {16}$. Therefore $pq\equiv 1,~9\pmod {16}$. If $M$ is even, then $M^{4}\equiv 0\pmod {16}$, and so $-pq\equiv 1,~9\pmod {16}$. Therefore $pq\equiv 7,~15\pmod {16}$.
\end{proof}

\smallskip

\begin{lem}
\label{lem:T3}
There exist integer solutions for the torsor $\cT_{3}: N^{2}=-2M^{4}+pqe^{4}$ satisfying \eqref{eq:gcd} only if $\left(\dfrac{-2}{p}\right)=1$ and $\left(\dfrac{-2}{q}\right)=1$. Equivalently, $p,q\equiv 1$ or $3\pmod 8$. Moreover, reducing modulo $16$, one obtains either $pq\equiv 3,~11\pmod {16}$ or $pq\equiv 1,~9\pmod {16}$.
\end{lem}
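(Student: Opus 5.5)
The argument follows the pattern of Lemmas \ref{lem:T1} and \ref{lem:T2}: first reduce $\cT_3$ modulo $p$ and $q$ to get the Legendre symbol conditions, then reduce modulo $16$ to get the congruence on $pq$.

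\emph{Reduction modulo $p$ and $q$.} Reducing $\cT_{3}: N^{2}=-2M^{4}+pqe^{4}$ modulo $p$ gives $N^{2}\equiv -2M^{4}\pmod p$. By \eqref{eq:gcd} we have $\gcd(b_2,M)=\gcd(pq,M)=1$, so $p\nmid M$. Multiplying by the inverse of $M^{4}$ shows that $-2$ is a square modulo $p$. It cannot be $0$ modulo $p$, since $p$ is odd. Hence $\left(\frac{-2}{p}\right)=1$. The same argument modulo $q$ gives $\left(\frac{-2}{q}\right)=1$. By the supplementary laws, $\left(\frac{-2}{\ell}\right)=\left(\frac{-1}{\ell}\right)\left(\frac{2}{\ell}\right)=1$ exactly when $\ell\equiv 1,3\pmod 8$. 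This gives the stated equivalent form.

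\emph{Parity of the variables.} By \eqref{eq:gcd}, $\gcd(b_1,e)=\gcd(-2,e)=1$, so $e$ is odd and $e^{4}\equiv 1\pmod{16}$. Then $N^{2}\equiv pq-2M^{4}$ is odd, so $N$ is odd and $N^{2}\equiv 1,9\pmod{16}$.

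\emph{Reduction modulo $16$.} Split on the parity of $M$.
\begin{itemize}
\item If $M$ is odd, then $M^{4}\equiv 1\pmod{16}$. This gives $pq-2\equiv 1,9\pmod{16}$, that is, $pq\equiv 3,11\pmod{16}$.
\item If $M$ is even, then $M^{4}\equiv 0\pmod{16}$. This gives $pq\equiv 1,9\pmod{16}$.
\end{itemize}
These are exactly the two alternatives in the statement.

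The only point needing any care is that the gcd conditions justify $p,q\nmid M$ and $e$ odd. Everything else is a direct computation, so I do not expect a real obstacle.
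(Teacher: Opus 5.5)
Your proposal is correct and follows the paper's proof essentially step for step. Like the paper, you reduce modulo $p$ and $q$ using $\gcd(pq,M)=1$, then use $e$ odd and split on the parity of $M$ modulo $16$; your explicit check that $N$ is odd (so $N^2\equiv 1,9 \pmod{16}$) is a detail the paper leaves implicit.
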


\begin{proof}
Reducing $\cT_{3}$ modulo $p$, we get $N^{2}\equiv -2M^{4}\pmod p$. Since $p\nmid M$ by \eqref{eq:gcd}, we get $\left(\dfrac{-2}{p}\right)=1$, and hence $p\equiv 1$ or $3\pmod 8$. Similarly, reducing $\cT_{3}$ modulo $q$, we get $\left(\dfrac{-2}{q}\right)=1$, and hence $q\equiv 1$ or $3\pmod 8$.

Now, from \eqref{eq:gcd}, we have $e$ odd. Hence $e^{4}\equiv 1\pmod {16}$. If $M$ is odd, then $M^{4}\equiv 1\pmod {16}$, and reducing $\cT_{3}$ modulo $16$, we get $-2+pq\equiv 1,~9\pmod {16}$. Therefore $pq\equiv 3,~11\pmod {16}$. If $M$ is even, then $M^{4}\equiv 0\pmod {16}$, and so $pq\equiv 1,~9\pmod {16}$.
\end{proof}


\smallskip

\begin{lem}
\label{lem:T45}
There exist integer solutions for the torsors $\cT_{4}: N^{2}=qM^{4}-2pe^{4}$ and $\cT_{5}: N^{2}=-qM^{4}+2pe^{4}$ satisfying \eqref{eq:gcd} only under the following necessary conditions. For $\cT_{4}$, one must have $\left(\dfrac{q}{p}\right)=1$ and $\left(\dfrac{-2p}{q}\right)=1$, with either $q\equiv 1\pmod 8$ or $q-2p\equiv 1,~9\pmod {16}$. For $\cT_{5}$, one must have $\left(\dfrac{-q}{p}\right)=1$ and $\left(\dfrac{2p}{q}\right)=1$, with either $q\equiv 7\pmod 8$ or $2p-q\equiv 1,~9\pmod {16}$.
\end{lem}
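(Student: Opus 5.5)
We argue exactly as in Lemmas~\ref{lem:T1}--\ref{lem:T3}: first reduce each torsor modulo $p$ and modulo $q$ to obtain the Legendre-symbol conditions, then reduce modulo $16$ to obtain the $2$-adic conditions. The gcd criterion \eqref{eq:gcd} supplies the needed coprimality: $\gcd(b_1,e)=1$ and $\gcd(b_2,M)=1$.

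For $\cT_4$ we have $b_1=q$ and $b_2=-2p$. Reducing modulo $p$ gives $N^2\equiv qM^4\pmod p$. Since $\gcd(b_2,M)=1$, $p\nmid M$, so $\legendre{q}{p}=1$. Reducing modulo $q$ gives $N^2\equiv -2pe^4\pmod q$, and $q\nmid e$ because $\gcd(b_1,e)=1$, so $\legendre{-2p}{q}=1$. In both reductions we should also check that the left side is a nonzero square: if, say, $p\mid N$, then $p\mid qM^4$, which is impossible. So the conclusion is a genuine quadratic residue and not the degenerate case.

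For the $2$-adic part, $\gcd(b_2,M)=1$ forces $M$ odd, so $M^4\equiv1\pmod{16}$. If $e$ is even, then $e^4\equiv0\pmod{16}$ and $N^2\equiv q\pmod{16}$. Here $N$ is odd since $N^2\equiv q\pmod 2$, so $N^2\equiv 1,9\pmod{16}$, hence $q\equiv1,9\pmod{16}$, i.e.\ $q\equiv1\pmod 8$. If $e$ is odd, then $e^4\equiv1\pmod{16}$ and $N^2\equiv q-2p\pmod{16}$. Since $q-2p$ is odd, $N$ is odd, and so $q-2p\equiv1,9\pmod{16}$.

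$\cT_5$ is handled identically with $b_1=-q$ and $b_2=2p$. Modulo $p$ (with $p\nmid M$) we get $\legendre{-q}{p}=1$, and modulo $q$ (with $q\nmid e$) we get $\legendre{2p}{q}=1$. Again $M$ is odd. If $e$ is even, then $N^2\equiv -q\pmod{16}$, so $-q\equiv1,9\pmod{16}$, i.e.\ $q\equiv 7,15\pmod{16}$, which is $q\equiv7\pmod 8$. If $e$ is odd, then $N^2\equiv 2p-q\pmod{16}$, giving $2p-q\equiv1,9\pmod{16}$.

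The only delicate point is the parity bookkeeping. We must confirm that $M$ is odd (from $\gcd(-2p,M)=1$) and that $N$ is odd in every branch, so that $N^2\equiv1,9\pmod{16}$ applies. This needs checking in each case, since $e$ may be even. Beyond that, the argument is routine.
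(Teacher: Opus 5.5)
Your proposal is correct and follows the paper's proof essentially step for step. You reduce modulo $p$ and $q$, using $\gcd(b_2,M)=\gcd(b_1,e)=1$, to get the Legendre-symbol conditions, and then reduce modulo $16$ split on the parity of $e$, with $M$ (hence $N$) odd; your extra checks that $p\nmid N$ and that $N$ is odd in each branch just make explicit what the paper compresses into ``$M$ is odd and hence $N$ is odd.''
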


\begin{proof}
First consider $\cT_{4}: N^{2}=qM^{4}-2pe^{4}$. Reducing $\cT_{4}$ modulo $p$, we get $N^{2}\equiv qM^{4}\pmod p$. Since $p\nmid M$ by \eqref{eq:gcd}, we get $\left(\dfrac{q}{p}\right)=1$. Reducing $\cT_{4}$ modulo $q$, we get $N^{2}\equiv -2pe^{4}\pmod q$. Since $q\nmid e$ by \eqref{eq:gcd}, we get $\left(\dfrac{-2p}{q}\right)=1$.

Now reducing $\cT_{4}$ modulo $16$, from \eqref{eq:gcd}, we deduce that $M$ is odd and hence $N$ is odd. Thus $N^{2}\equiv 1,~9\pmod {16}$ and $M^{4}\equiv 1\pmod {16}$. If $e$ is even, then $e^{4}\equiv 0\pmod {16}$, and so $N^{2}\equiv q\pmod {16}$. Therefore $q\equiv 1,~9\pmod {16}$, which implies $q\equiv 1\pmod 8$. If $e$ is odd, then $e^{4}\equiv 1\pmod {16}$, and using this in $N^{2}=qM^{4}-2pe^{4}$, we get $q-2p\equiv 1,~9\pmod {16}$.

Now consider $\cT_{5}: N^{2}=-qM^{4}+2pe^{4}$. Reducing $\cT_{5}$ modulo $p$, we get $N^{2}\equiv -qM^{4}\pmod p$. Since $p\nmid M$ by \eqref{eq:gcd}, we get $\left(\dfrac{-q}{p}\right)=1$. Reducing $\cT_{5}$ modulo $q$, we get $N^{2}\equiv 2pe^{4}\pmod q$. Since $q\nmid e$ by \eqref{eq:gcd}, we get $\left(\dfrac{2p}{q}\right)=1$.

Again, reducing $\cT_{5}$ modulo $16$, from \eqref{eq:gcd}, we get that $M$ is odd and hence $N$ is odd. If $e$ is even, then $e^{4}\equiv 0\pmod {16}$, and so $N^{2}\equiv -q\pmod {16}$. Therefore $-q\equiv 1,~9\pmod {16}$, which implies $q\equiv 7\pmod 8$. If $e$ is odd, then $e^{4}\equiv 1\pmod {16}$, and using this in $N^{2}=-qM^{4}+2pe^{4}$, we get $2p-q\equiv 1,~9\pmod {16}$.
\end{proof}

\smallskip

\begin{lem}
\label{lem:T67}
There exist integer solutions for the torsors $\cT_{6}: N^{2}=pM^{4}-2qe^{4}$ and $\cT_{7}: N^{2}=-pM^{4}+2qe^{4}$ satisfying \eqref{eq:gcd} only under the following necessary conditions. For $\cT_{6}$, one must have $\left(\dfrac{p}{q}\right)=1$ and $\left(\dfrac{-2q}{p}\right)=1$, with either $p\equiv 1\pmod 8$ or $p-2q\equiv 1,~9\pmod {16}$. For $\cT_{7}$, one must have $\left(\dfrac{-p}{q}\right)=1$ and $\left(\dfrac{2q}{p}\right)=1$, with either $p\equiv 7\pmod 8$ or $2q-p\equiv 1,~9\pmod {16}$.
\end{lem}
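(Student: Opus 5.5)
This is the mirror image of Lemma~\ref{lem:T45}: interchanging the roles of $p$ and $q$ turns $\cT_4,\cT_5$ into $\cT_6,\cT_7$. I would run the same three reductions directly, using the gcd conditions \eqref{eq:gcd} at each step.

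For $\cT_6: N^2=pM^4-2qe^4$, the steps are as follows.
\begin{itemize}
\item \emph{Modulo $q$.} Here $N^2\equiv pM^4\pmod q$. Since $\gcd(b_2,M)=1$ with $b_2=-2q$, we have $q\nmid M$. Dividing by $M^4$ gives $\legendre{p}{q}=1$.
\item \emph{Modulo $p$.} Here $N^2\equiv -2qe^4\pmod p$. Since $\gcd(b_1,e)=1$ with $b_1=p$, we have $p\nmid e$. Hence $\legendre{-2q}{p}=1$.
\item \emph{Parity of $M$ and $N$.} Because $2\mid b_2$, we get $M$ odd. Then $N^2\equiv pM^4\equiv p \pmod 2$, so $N$ is odd. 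Thus $M^4\equiv 1$ and $N^2\equiv 1,9\pmod{16}$.
\item \emph{Modulo $16$, $e$ even.} Then $e^4\equiv 0\pmod{16}$, so $N^2\equiv p\pmod{16}$. This forces $p\equiv 1,9\pmod{16}$, that is, $p\equiv1\pmod 8$.
\item \emph{Modulo $16$, $e$ odd.} Then $e^4\equiv1\pmod{16}$, so $N^2\equiv p-2q\pmod{16}$. This forces $p-2q\equiv1,9\pmod{16}$.
\end{itemize}

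For $\cT_7: N^2=-pM^4+2qe^4$, the argument is identical with signs changed.
\begin{itemize}
\item Reducing modulo $q$ with $q\nmid M$ gives $\legendre{-p}{q}=1$.
\item Reducing modulo $p$ with $p\nmid e$ gives $\legendre{2q}{p}=1$.
\item Again $M$ and $N$ are odd.
\item If $e$ is even, then $N^2\equiv -p\pmod{16}$, so $-p\equiv1,9\pmod{16}$, that is, $p\equiv 7\pmod 8$.
\item If $e$ is odd, then $N^2\equiv 2q-p\pmod{16}$, so $2q-p\equiv1,9\pmod{16}$.
\end{itemize}

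The only delicate point is justifying the divisibility facts from \eqref{eq:gcd}. These are $q\nmid M$ and $2\nmid M$ (both from $\gcd(b_2,M)=1$) and $p\nmid e$ (from $\gcd(b_1,e)=1$). Everything else is routine: odd squares are $1,9 \bmod 16$, and fourth powers of odd integers are $1\bmod16$.
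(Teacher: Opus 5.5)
Your proof is correct and is essentially the paper's argument. The paper derives this lemma by interchanging $p$ and $q$ in Lemma~\ref{lem:T45}. Your direct reductions modulo $q$, modulo $p$ and modulo $16$, using the gcd conditions \eqref{eq:gcd}, are exactly that lemma's proof written out for $\cT_6$ and $\cT_7$.
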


\begin{proof}

Interchanging $p$ and $q$ in Lemma~\ref{lem:T45} sends
$\cT_4$ to $\cT_6$ and $\cT_5$ to $\cT_7$. Under the same interchange,
the Legendre-symbol and congruence conditions in Lemma~\ref{lem:T45} become
exactly the conditions stated above. Thus the result follows directly from
Lemma~\ref{lem:T45}.

\end{proof}

\smallskip

\smallskip

\begin{rem}
\label{rem:alpha_case}
From Lemmas~\ref{lem:T1}--\ref{lem:T67}, we have seen that the possible extra
elements of $\alpha\left(E_{-2pq}(\bbQ)\right)$ can arise only from the torsors
$\cT_{1},\ldots,\cT_{7}$. Therefore, for each residue class of $(p,q)$ modulo $8$,
we check the solvability conditions of these torsors and obtain an upper bound for
$\alpha\left(E_{-2pq}(\bbQ)\right)$.
\end{rem}

So, using Lemmas~\ref{lem:T1}--\ref{lem:T67}, we obtain the following results.

\medskip

\subsection*{Case: I}

At first, we shall consider the case when $\left(\dfrac{2}{p}\right)=1=\left(\dfrac{2}{q}\right)$.
It gives $(p,q)\equiv (1,1),(1,7),(7,1),(7,7)\pmod 8$.

\begin{lem}
\label{lem:alpha_case_1.1}
If $(p,q)\equiv (1,1)\pmod 8$, then
\[
\alpha\left(E_{-2pq}(\bbQ)\right)\subseteq
\begin{cases}
\{1,-1,2,-2,p,-p,q,-q,2p,-2p,2q,-2q,pq,-pq,2pq,-2pq\},
& \text{for } \legendre{p}{q}=1,\\
\{1,-1,2,-2,pq,-pq,2pq,-2pq\},
& \text{for } \legendre{p}{q}=-1.
\end{cases}
\]
\end{lem}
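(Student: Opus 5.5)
The plan is to combine Remark~\ref{rem:alpha_case} with the necessary conditions of Lemmas~\ref{lem:T1}--\ref{lem:T67}, specialised to $p\equiv q\equiv 1\pmod 8$. Since $\alpha(E_{-2pq}(\bbQ))$ is a subgroup of $\bbQ^*/{\bbQ^*}^2$ containing $-2pq$, and every element is a square class of a divisor of $-2pq$, it automatically lies in the full group of sixteen classes $\{\pm1,\pm2,\pm p,\pm q,\pm2p,\pm2q,\pm pq,\pm2pq\}$. This already gives the case $\legendre{p}{q}=1$; no torsor analysis is needed there.

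For $\legendre{p}{q}=-1$ we must exclude the eight classes $\pm p,\pm q,\pm 2p,\pm 2q$. By the pairing described before Table~\ref{table:T-torsors}, a class $d$ and its complement $-2pq/d$ correspond to the same torsor up to swapping $M$ and $e$. Hence it suffices to show that $\cT_4,\cT_5,\cT_6,\cT_7$ have no solutions satisfying \eqref{eq:gcd}.

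For $\cT_4$, Lemma~\ref{lem:T45} requires $\legendre{q}{p}=1$. Since $p\equiv 1\pmod 4$, quadratic reciprocity gives $\legendre{q}{p}=\legendre{p}{q}=-1$, a contradiction. For $\cT_5$ the lemma requires $\legendre{-q}{p}=1$. But $\legendre{-1}{p}=1$ because $p\equiv1\pmod 4$, so $\legendre{-q}{p}=\legendre{q}{p}=-1$, again impossible. Symmetrically, $\cT_6$ needs $\legendre{p}{q}=1$ and $\cT_7$ needs $\legendre{-p}{q}=\legendre{p}{q}=1$ by Lemma~\ref{lem:T67}, and both fail. So only $\pm1,\pm2,\pm pq,\pm2pq$ can survive, which is the stated inclusion.

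There is no real obstacle here. The one point needing care is justifying that excluding a torsor for $b_1$ also excludes the complementary class $b_2$. This follows because the equations $N^2=b_1M^4+b_2e^4$ and $N^2=b_2M^4+b_1e^4$ are identical after interchanging $M$ and $e$, and the gcd conditions \eqref{eq:gcd} are symmetric under this swap together with $b_1\leftrightarrow b_2$. With that checked, the reciprocity computation finishes the proof.
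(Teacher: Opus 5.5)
Your proof is correct and follows the paper's argument. The case $\legendre{p}{q}=1$ is just the trivial bound by all sixteen divisor classes. For $\legendre{p}{q}=-1$ you rule out $\cT_4$--$\cT_7$ using the Legendre-symbol conditions of Lemmas~\ref{lem:T45} and~\ref{lem:T67} together with quadratic reciprocity and $\legendre{-1}{p}=\legendre{-1}{q}=1$, exactly as the paper does; your check that \eqref{eq:gcd} is symmetric under $M\leftrightarrow e$, $b_1\leftrightarrow b_2$ makes explicit the pairing the paper takes for granted.
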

\begin{proof}
$\cT_{1}$: Since $p\equiv q\equiv 1\pmod 8$, Lemma~\ref{lem:T1} shows that $\cT_{1}$ may have integer solutions. Hence $-1$ and $2pq$ may be elements of $\alpha\left(E_{-2pq}(\bbQ)\right)$.

$\cT_{2}$ and $\cT_{3}$: Since $p\equiv q\equiv 1\pmod 8$, the congruence conditions in Lemmas~\ref{lem:T2} and \ref{lem:T3} are satisfied. Hence $2,-pq,-2,pq$ may be elements of $\alpha\left(E_{-2pq}(\bbQ)\right)$.

$\cT_{4}$ and $\cT_{5}$: Here $\left(\dfrac{q}{p}\right)=\left(\dfrac{p}{q}\right)$ by quadratic reciprocity. If $\left(\dfrac{p}{q}\right)=1$, then all the Legendre-symbol conditions in Lemma~\ref{lem:T45} are satisfied, and hence $q,-2p,-q,2p$ may be elements of $\alpha\left(E_{-2pq}(\bbQ)\right)$. If $\left(\dfrac{p}{q}\right)=-1$, then the Legendre-symbol conditions fail, and hence these torsors have no integer solutions.

$\cT_{6}$ and $\cT_{7}$: Similarly, if $\left(\dfrac{p}{q}\right)=1$, then all the Legendre-symbol conditions in Lemma~\ref{lem:T67} are satisfied, and hence $p,-2q,-p,2q$ may be elements of $\alpha\left(E_{-2pq}(\bbQ)\right)$. If $\left(\dfrac{p}{q}\right)=-1$, then these torsors have no integer solutions.

Combining the above possibilities, we get the desired result.
\end{proof}

\begin{lem}
\label{lem:alpha_case_1.2}
If $(p,q)\equiv (1,7)\pmod 8$, then
\[
\alpha\left(E_{-2pq}(\bbQ)\right)\subseteq
\begin{cases}
\{1,2,p,-q,2p,-2q,-pq,-2pq\},
& \text{for } \legendre{p}{q}=1,\\
\{1,2,-pq,-2pq\},
& \text{for } \legendre{p}{q}=-1.
\end{cases}
\]
\end{lem}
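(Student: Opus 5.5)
We follow the proof of Lemma~\ref{lem:alpha_case_1.1}: go through the seven torsors of Table~\ref{table:T-torsors} and apply the necessary conditions of Lemmas~\ref{lem:T1}--\ref{lem:T67} with $p\equiv 1$, $q\equiv 7\pmod 8$. Since $\alpha(E_{-2pq}(\bbQ))$ is a group containing $-2pq$, each surviving class $b_1$ carries its complement $-2pq/b_1$ with it. The elements $1$ and $-2pq$ are always present.

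The torsor-by-torsor check is as follows.
\begin{itemize}
\item $\cT_1$ is excluded by Lemma~\ref{lem:T1}, because $q\equiv 3\pmod 4$. This removes $-1$ and $2pq$.
\item $\cT_2$ survives Lemma~\ref{lem:T2}, since $\legendre{2}{p}=\legendre{2}{q}=1$. So $2$ and $-pq$ remain possible.
\item $\cT_3$ is excluded by Lemma~\ref{lem:T3}, because $\legendre{-2}{q}=-1$ for $q\equiv 7\pmod 8$. This removes $-2$ and $pq$.
\end{itemize}
For the remaining torsors, quadratic reciprocity with $p\equiv 1\pmod 4$ gives $\legendre{q}{p}=\legendre{p}{q}$. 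We also have $\legendre{-1}{p}=1$, $\legendre{-1}{q}=-1$ and $\legendre{2}{q}=1$. Hence:
\begin{itemize}
\item $\cT_4$ requires $\legendre{-2p}{q}=-\legendre{p}{q}=1$ and $\legendre{q}{p}=\legendre{p}{q}=1$ simultaneously. This is impossible, so $q$ and $-2p$ are removed.
\item $\cT_5$ requires $\legendre{-q}{p}=\legendre{p}{q}=1$ and $\legendre{2p}{q}=\legendre{p}{q}=1$. The congruence alternative $q\equiv 7\pmod 8$ is automatic. So $-q$ and $2p$ survive exactly when $\legendre{p}{q}=1$.
\item $\cT_6$ requires $\legendre{p}{q}=1$ and $\legendre{-2q}{p}=\legendre{q}{p}=1$, with $p\equiv 1\pmod 8$ automatic. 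So $p$ and $-2q$ survive when $\legendre{p}{q}=1$.
\item $\cT_7$ requires $\legendre{-p}{q}=-\legendre{p}{q}=1$ and $\legendre{2q}{p}=\legendre{q}{p}=1$. These are contradictory, so $-p$ and $2q$ are removed.
\end{itemize}

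Collecting the survivors gives $\{1,2,p,-q,2p,-2q,-pq,-2pq\}$ when $\legendre{p}{q}=1$, and $\{1,2,-pq,-2pq\}$ when $\legendre{p}{q}=-1$, as claimed. As a consistency check, the first set is closed under multiplication modulo squares; for example, $2\cdot p=2p$ and $p\cdot(-q)=-pq$.

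The main place to be careful is the sign bookkeeping in $\cT_4$ through $\cT_7$. One must compute each symbol such as $\legendre{-2p}{q}$ correctly from the supplementary laws for $q\equiv 7\pmod 8$, and must not confuse the symbol taken modulo $p$ with the one taken modulo $q$. Reciprocity is what makes the two symbols equal here.
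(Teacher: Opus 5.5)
Your proposal is correct and matches the paper's approach. The paper states this lemma without a separate proof, but it is obtained exactly as in the proofs of Lemmas~\ref{lem:alpha_case_1.1} and~\ref{lem:alpha_case_2.1}: run through $\cT_1,\dots,\cT_7$ and apply the necessary conditions of Lemmas~\ref{lem:T1}--\ref{lem:T67}. Your sign computations check out, in particular $\legendre{-2p}{q}=\legendre{-p}{q}=-\legendre{p}{q}$, which kills $\cT_4$ and $\cT_7$. For the inclusion itself, only the exclusions of $\cT_1,\cT_3,\cT_4,\cT_7$ (and of $\cT_5,\cT_6$ when $\legendre{p}{q}=-1$) carry the proof; the survival remarks just show that the bound is the best these lemmas can give.
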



\begin{lem}
\label{lem:alpha_case_1.3}
If $(p,q)\equiv (7,1)\pmod 8$, then
\[
\alpha\left(E_{-2pq}(\bbQ)\right)\subseteq
\begin{cases}
\{1,2,-p,q,-2p,2q,-pq,-2pq\},
& \text{for } \legendre{p}{q}=1,\\
\{1,2,-pq,-2pq\},
& \text{for } \legendre{p}{q}=-1.
\end{cases}
\]
\end{lem}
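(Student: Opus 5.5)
The statement to prove is Lemma~\ref{lem:alpha_case_1.3}, the case $(p,q)\equiv(7,1)\pmod 8$. I would prove it exactly as Lemma~\ref{lem:alpha_case_1.1} was proved: run through the seven torsors of Table~\ref{table:T-torsors} and apply the necessary conditions of Lemmas~\ref{lem:T1}--\ref{lem:T67}. Each torsor $\cT_i$ contributes its pair $\{b_1, -2pq/b_1\}$ only if its conditions can hold. The inclusion then follows by collecting the surviving pairs together with $1$ and $-2pq$. A faster route would be to swap $p$ and $q$ in Lemma~\ref{lem:alpha_case_1.2}, since the curve $y^2=x^3-2pqx$ is symmetric in $p,q$. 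The $(1,7)$ case therefore transforms directly: $\{1,2,p,-q,2p,-2q,-pq,-2pq\}$ becomes $\{1,2,q,-p,2q,-2p,-pq,-2pq\}$, which is the claimed set. The Legendre condition is unchanged because $\legendre{p}{q}=\legendre{q}{p}$ when $q\equiv1\pmod 4$. I would still verify the torsors directly, for two reasons: Lemma~\ref{lem:alpha_case_1.2} is stated without proof, and the check is short.

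The direct checks are as follows.
\begin{itemize}
\item $\cT_1$ requires $p\equiv 1\pmod 4$, which fails since $p\equiv 7\pmod 8$. This excludes $-1$ and $2pq$.
\item $\cT_3$ requires $p\equiv 1,3\pmod 8$, which fails. This excludes $-2$ and $pq$.
\item $\cT_2$ requires $p,q\equiv 1,7\pmod 8$, which holds. Its mod $16$ condition also holds, since $pq\equiv 7,15\pmod{16}$ when $M$ is even. So $2$ and $-pq$ survive.
\end{itemize}
For $\cT_4,\dots,\cT_7$ I would use reciprocity, $\legendre{q}{p}=\legendre{p}{q}$, together with $\legendre{-1}{p}=-1$, $\legendre{2}{p}=\legendre{2}{q}=\legendre{-1}{q}=1$.
\begin{itemize}
\item $\cT_4$ needs $\legendre{q}{p}=1$ and $\legendre{-2p}{q}=\legendre{p}{q}=1$; the $16$-adic alternative $q\equiv1\pmod 8$ holds. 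So $q$ and $-2p$ survive exactly when $\legendre{p}{q}=1$.
\item $\cT_5$ needs $\legendre{-q}{p}=-\legendre{p}{q}=1$ and also $\legendre{2p}{q}=\legendre{p}{q}=1$. These contradict each other, so $\cT_5$ is always excluded, removing $-q$ and $2p$.
\item $\cT_6$ needs $\legendre{-2q}{p}=-\legendre{p}{q}=1$ and $\legendre{p}{q}=1$. Again a contradiction, so $\cT_6$ is excluded, removing $p$ and $-2q$.
\item $\cT_7$ needs $\legendre{-p}{q}=\legendre{p}{q}=1$ and $\legendre{2q}{p}=\legendre{p}{q}=1$; the $16$-adic alternative $p\equiv 7\pmod 8$ holds. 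So $-p$ and $2q$ survive exactly when $\legendre{p}{q}=1$.
\end{itemize}

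Collecting the survivors gives $\{1,2,-p,q,-2p,2q,-pq,-2pq\}$ when $\legendre{p}{q}=1$ and $\{1,2,-pq,-2pq\}$ when $\legendre{p}{q}=-1$, which is the statement. The only delicate point is the sign bookkeeping in the Legendre symbols, in particular the factor $\legendre{-1}{p}=-1$ that kills $\cT_5$ and $\cT_6$. No step is a real obstacle.
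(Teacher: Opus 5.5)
Your proposal is correct. The paper's proof of this lemma is only the one-line symmetry argument you list as the faster route: interchange $p$ and $q$ in Lemma~\ref{lem:alpha_case_1.2}. Your remark that the Legendre condition is preserved because $q\equiv 1\pmod 4$ supplies the reciprocity step the paper leaves implicit.

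Your main argument is different in structure. You run all seven torsors directly through Lemmas~\ref{lem:T1}--\ref{lem:T67}, as the paper does for Lemmas~\ref{lem:alpha_case_1.1} and~\ref{lem:alpha_case_2.1}, and the sign bookkeeping is right:
\begin{itemize}
\item $\legendre{-1}{p}=-1$ rules out $\cT_1$, and makes $\cT_5$ self-contradictory.
\item $\legendre{-2}{p}=-1$ rules out $\cT_3$, and makes $\cT_6$ self-contradictory.
\item $\cT_4$ and $\cT_7$ reduce exactly to $\legendre{p}{q}=1$.
\end{itemize}

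The trade-off is this. The symmetry route is shorter, but it rests on Lemma~\ref{lem:alpha_case_1.2}, which the paper states without proof. Your direct check is self-contained. One minor point: since the statement is only an inclusion, just the exclusions matter. Your ``survives'' verifications for $\cT_2$, $\cT_4$ and $\cT_7$ are harmless but not needed for the proof.
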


\begin{proof}
This is obtained from Lemma~\ref{lem:alpha_case_1.2} by interchanging $p$ and $q$.
\end{proof}

\begin{lem}
\label{lem:alpha_case_1.4}
If $(p,q)\equiv (7,7)\pmod 8$, then
\[
\alpha\left(E_{-2pq}(\bbQ)\right)\subseteq
\begin{cases}
\{1,2,p,-q,2p,-2q,-pq,-2pq\},
& \text{for } \legendre{p}{q}=1,\\
\{1,2,-p,q,-2p,2q,-pq,-2pq\},
& \text{for } \legendre{p}{q}=-1.
\end{cases}
\]
\end{lem}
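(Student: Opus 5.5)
The plan follows the proof of Lemma~\ref{lem:alpha_case_1.1}. By Remark~\ref{rem:alpha_case}, the only possible elements of $\alpha(E_{-2pq}(\bbQ))$ besides $1$ and $-2pq$ come from the torsors $\cT_1,\dots,\cT_7$ of Table~\ref{table:T-torsors}, and each torsor contributes its pair $b_1$, $b_2$ together. So we test the necessary conditions of Lemmas~\ref{lem:T1}--\ref{lem:T67} one torsor at a time under $p\equiv q\equiv 7\pmod 8$. The one arithmetic input used throughout is quadratic reciprocity. Since $p\equiv q\equiv 3\pmod 4$, it gives $\legendre{q}{p}=-\legendre{p}{q}$. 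Also $\legendre{-1}{p}=\legendre{-1}{q}=-1$ and $\legendre{2}{p}=\legendre{2}{q}=1$.

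First come the torsors that do not depend on $\legendre{p}{q}$. For $\cT_1$, Lemma~\ref{lem:T1} needs $p\equiv q\equiv 1\pmod 4$, which fails, so $-1$ and $2pq$ are excluded. For $\cT_3$, Lemma~\ref{lem:T3} needs $p,q\equiv 1,3\pmod 8$, which fails, so $-2$ and $pq$ are excluded. For $\cT_2$, the Legendre conditions of Lemma~\ref{lem:T2} hold. Moreover $p,q\in\{7,15\}\pmod{16}$ gives $pq\equiv 1$ or $9\pmod{16}$, so the mod-$16$ condition (odd-$M$ branch) is also met. Hence $2$ and $-pq$ survive in both cases.

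Next come $\cT_4,\dots,\cT_7$, where everything reduces to the sign of $\legendre{p}{q}$.
\begin{itemize}
\item $\cT_4$ requires $\legendre{q}{p}=1$ and $\legendre{-2p}{q}=-\legendre{p}{q}=1$. Both amount to $\legendre{p}{q}=-1$.
\item $\cT_5$ requires $\legendre{-q}{p}=\legendre{p}{q}=1$ and $\legendre{2p}{q}=\legendre{p}{q}=1$. Both amount to $\legendre{p}{q}=1$.
\item $\cT_6$ requires $\legendre{p}{q}=1$ and $\legendre{-2q}{p}=-\legendre{q}{p}=\legendre{p}{q}=1$. Both amount to $\legendre{p}{q}=1$.
\item $\cT_7$ requires $\legendre{-p}{q}=-\legendre{p}{q}=1$ and $\legendre{2q}{p}=\legendre{q}{p}=1$. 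Both amount to $\legendre{p}{q}=-1$.
\end{itemize}
The $2$-adic alternatives are always available. For $\cT_5$ and $\cT_7$ the prime involved is $\equiv 7\pmod 8$. For $\cT_4$ and $\cT_6$ one checks that $2p\equiv 2q\equiv 14\pmod{16}$, so $q-2p$ and $p-2q$ are $\equiv 1$ or $9\pmod{16}$.

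Collecting, when $\legendre{p}{q}=1$ the surviving classes are $1,2,-pq,-2pq$ together with $-q,2p$ (from $\cT_5$) and $p,-2q$ (from $\cT_6$). When $\legendre{p}{q}=-1$ they are $1,2,-pq,-2pq$ together with $q,-2p$ (from $\cT_4$) and $-p,2q$ (from $\cT_7$). These are exactly the two sets in the statement.

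There is no real obstacle; the step most prone to error is the sign bookkeeping in the reciprocity computations for $\cT_4$--$\cT_7$. I would double-check it by confirming that each resulting set is closed under multiplication modulo squares, as it must be, being an upper bound for a subgroup.
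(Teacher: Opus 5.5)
Your proof is correct and takes the same approach the paper intends. The paper gives no written proof of this lemma, but its method is the torsor-by-torsor check via Lemmas~\ref{lem:T1}--\ref{lem:T67} and quadratic reciprocity used for Lemma~\ref{lem:alpha_case_1.1}, and your signs for $\cT_4$--$\cT_7$ are all right. One minor point: for a $\subseteq$ statement, only the exclusions of $\cT_1$, $\cT_3$ and the $\legendre{p}{q}$-dependent torsors are needed, so your checks that $\cT_2$ and the surviving torsors pass the $2$-adic tests are harmless but unnecessary.
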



\subsection*{Case: II}Now we shall consider the case when $\left(\dfrac{2}{p}\right)=1$ and
$\left(\dfrac{2}{q}\right)=-1$. It gives $(p,q)\equiv (1,3),(1,5),(7,3),(7,5)\pmod 8$.

\begin{lem}
\label{lem:alpha_case_2.1}
If $(p,q)\equiv (1,3)\pmod 8$, then
\[
\alpha\left(E_{-2pq}(\bbQ)\right)\subseteq
\begin{cases}
\{1,-2,p,q,-2p,-2q,pq,-2pq\},
& \text{for } \legendre{p}{q}=1,\\
\{1,-2,pq,-2pq\},
& \text{for } \legendre{p}{q}=-1.
\end{cases}
\]
\end{lem}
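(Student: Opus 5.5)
We follow the proof of Lemma~\ref{lem:alpha_case_1.1}: by Remark~\ref{rem:alpha_case}, $\alpha(E_{-2pq}(\bbQ))$ consists of $1$, $-2pq$, and those square classes $b_1$ (with their complements $-2pq/b_1$) for which the torsor $\cT_i$ in Table~\ref{table:T-torsors} can have a solution satisfying \eqref{eq:gcd}. We then test each of $\cT_1,\dots,\cT_7$ against the necessary conditions in Lemmas~\ref{lem:T1}--\ref{lem:T67}, using $p\equiv 1\pmod 8$ and $q\equiv 3\pmod 8$. We will use the facts
\[
\legendre{-1}{p}=\legendre{2}{p}=\legendre{-2}{p}=1,\qquad \legendre{-1}{q}=\legendre{2}{q}=-1,\qquad \legendre{-2}{q}=1,
\]
together with $\legendre{q}{p}=\legendre{p}{q}$, which is quadratic reciprocity since $p\equiv 1\pmod 4$.

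The torsors that are excluded outright are handled first. $\cT_1$ needs $q\equiv 1\pmod 4$ (Lemma~\ref{lem:T1}), which fails, so $-1$ and $2pq$ are excluded. $\cT_2$ needs $\legendre{2}{q}=1$ (Lemma~\ref{lem:T2}), which fails, so $2$ and $-pq$ are excluded. For $\cT_3$, Lemma~\ref{lem:T3} only requires $p,q\equiv 1,3\pmod 8$, and here $pq\equiv 3\pmod 8$ is compatible with the mod $16$ alternative $pq\equiv 3,11\pmod{16}$. So $-2$ and $pq$ remain possible in both cases.

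Next come the four torsors whose status depends on $\legendre{p}{q}$.
\begin{itemize}
\item For $\cT_4$, Lemma~\ref{lem:T45} asks for $\legendre{q}{p}=\legendre{p}{q}=1$ and $\legendre{-2p}{q}=\legendre{-2}{q}\legendre{p}{q}=\legendre{p}{q}=1$. The $2$-adic condition holds because $q-2p\equiv 1\pmod 8$, so $q-2p\equiv 1,9\pmod{16}$. Hence $q$ and $-2p$ survive exactly when $\legendre{p}{q}=1$.
\item For $\cT_5$, the conditions are $\legendre{-q}{p}=\legendre{p}{q}=1$ and $\legendre{2p}{q}=-\legendre{p}{q}=1$. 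These are incompatible, so $-q$ and $2p$ are always excluded.
\item For $\cT_6$, Lemma~\ref{lem:T67} asks for $\legendre{p}{q}=1$ and $\legendre{-2q}{p}=\legendre{q}{p}=\legendre{p}{q}=1$. The congruence alternative $p\equiv 1\pmod 8$ holds automatically, so $p$ and $-2q$ survive exactly when $\legendre{p}{q}=1$.
\item For $\cT_7$, the conditions are $\legendre{-p}{q}=-\legendre{p}{q}=1$ and $\legendre{2q}{p}=\legendre{p}{q}=1$. These are again contradictory, so $-p$ and $2q$ are excluded.
\end{itemize}

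Collecting the surviving classes gives $\{1,-2,p,q,-2p,-2q,pq,-2pq\}$ when $\legendre{p}{q}=1$, and $\{1,-2,pq,-2pq\}$ when $\legendre{p}{q}=-1$, which is the claimed inclusion. There is no real obstacle here: the argument is bookkeeping of Legendre symbols. The only points needing care are the sign flips coming from $\legendre{-1}{q}=\legendre{2}{q}=-1$, which are exactly what make $\cT_5$ and $\cT_7$ contradictory, and checking that the mod $16$ alternatives in Lemmas~\ref{lem:T3} and \ref{lem:T45} are genuinely met.
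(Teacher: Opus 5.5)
Your proof is correct and follows the paper's torsor-by-torsor elimination via Lemmas~\ref{lem:T1}--\ref{lem:T67}, and your handling of $\cT_1,\cT_2,\cT_3,\cT_4,\cT_6$ is essentially identical to the paper's. The one difference is in $\cT_5$ and $\cT_7$: you exclude them because their Legendre-symbol conditions contradict each other (via $\legendre{-1}{q}=\legendre{2}{q}=-1$), whereas the paper excludes them because the $2$-adic alternatives fail, since $2p-q\equiv 7,15$ and $2q-p\equiv 5,13 \pmod{16}$ while neither prime is $\equiv 7 \pmod 8$; either obstruction suffices.
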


\begin{proof}
$\cT_{1}$: Since $q\equiv 3\pmod 8$, Lemma~\ref{lem:T1} shows that $\cT_{1}$ has no integer solutions.

$\cT_{2}$: Since $q\equiv 3\pmod 8$, Lemma~\ref{lem:T2} shows that $\cT_{2}$ has no integer solutions.

$\cT_{3}$: Since $p\equiv 1\pmod 8$ and $q\equiv 3\pmod 8$, Lemma~\ref{lem:T3} shows that $\cT_{3}$ may have integer solutions. Hence $-2$ and $pq$ may be elements of $\alpha\left(E_{-2pq}(\bbQ)\right)$.

$\cT_{4}$: Since $p\equiv 1\pmod 8$, by quadratic reciprocity we have
$
\left(\dfrac{q}{p}\right)=\left(\dfrac{p}{q}\right).
$
Moreover, since $q\equiv 3\pmod 8$, we have $\left(\dfrac{-2}{q}\right)=1$. Hence
$
\left(\dfrac{-2p}{q}\right)=\left(\dfrac{p}{q}\right).
$
Therefore, if $\left(\dfrac{p}{q}\right)=1$, then the Legendre-symbol conditions of $\cT_{4}$ are satisfied. Also $q-2p\equiv 1\pmod 8$, so the congruence condition in Lemma~\ref{lem:T45} may hold. Hence $q$ and $-2p$ may be elements of $\alpha\left(E_{-2pq}(\bbQ)\right)$. If $\left(\dfrac{p}{q}\right)=-1$, then the Legendre-symbol conditions fail, so $\cT_{4}$ has no integer solutions.

$\cT_{5}$: Since $q\equiv 3\pmod 8$, the congruence condition in Lemma~\ref{lem:T45} for $\cT_{5}$ is not satisfied. Hence $\cT_{5}$ has no integer solutions.

$\cT_{6}$: If $\left(\dfrac{p}{q}\right)=1$, then the Legendre-symbol conditions in Lemma~\ref{lem:T67} are satisfied, and since $p\equiv 1\pmod 8$, the congruence condition also holds. Hence $p$ and $-2q$ may be elements of $\alpha\left(E_{-2pq}(\bbQ)\right)$. If $\left(\dfrac{p}{q}\right)=-1$, then $\cT_{6}$ has no integer solutions.

$\cT_{7}$: Since $p\equiv 1\pmod 8$, the congruence condition in Lemma~\ref{lem:T67} for $\cT_{7}$ is not satisfied. Hence $\cT_{7}$ has no integer solutions.

Combining the above possibilities, we get the desired result.
\end{proof}

\begin{lem}
\label{lem:alpha_case_2.2}
If $(p,q)\equiv (1,5)\pmod 8$, then
\[
\alpha\left(E_{-2pq}(\bbQ)\right)\subseteq
\begin{cases}
\{1,-1,p,-p,2q,-2q,2pq,-2pq\},
& \text{for } \legendre{p}{q}=1,\\
\{1,-1,2pq,-2pq\},
& \text{for } \legendre{p}{q}=-1.
\end{cases}
\]
\end{lem}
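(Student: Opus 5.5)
The plan is to argue exactly as in the proof of Lemma~\ref{lem:alpha_case_2.1}. By Remark~\ref{rem:alpha_case}, the only square classes besides $1,-2pq$ that can lie in $\alpha(E_{-2pq}(\bbQ))$ are those attached to the seven torsors of Table~\ref{table:T-torsors}. A class $b_1$ and its complement $-2pq/b_1$ appear together. So I will run through $\cT_1,\dots,\cT_7$ under the hypothesis $p\equiv 1$, $q\equiv 5\pmod 8$. For each torsor I check the necessary conditions from Lemmas~\ref{lem:T1}--\ref{lem:T67}, discard those that fail, and collect the surviving pairs.

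The inputs I need are the following.
\begin{itemize}
\item Since $p\equiv1\pmod 8$, we have $\legendre{-1}{p}=\legendre{2}{p}=1$.
\item Since $q\equiv 5\pmod 8$, we have $\legendre{-1}{q}=1$ and $\legendre{2}{q}=\legendre{-2}{q}=-1$.
\item Quadratic reciprocity gives $\legendre{q}{p}=\legendre{p}{q}$.
\end{itemize}

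With these, the torsors behave as follows.
\begin{itemize}
\item $\cT_1$: we have $p\equiv q\equiv1\pmod 4$ and $pq\equiv5\pmod 8$. Lemma~\ref{lem:T1} therefore does not exclude it, so $-1$ and $2pq$ may occur.
\item $\cT_2$ and $\cT_3$: these are excluded by Lemmas~\ref{lem:T2} and \ref{lem:T3}, since $\legendre{2}{q}=\legendre{-2}{q}=-1$.
\item $\cT_4$: Lemma~\ref{lem:T45} requires $\legendre{q}{p}=\legendre{p}{q}=1$. It also requires $\legendre{-2p}{q}=\legendre{-1}{q}\legendre{2}{q}\legendre{p}{q}=-\legendre{p}{q}=1$. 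These two conditions are incompatible, so $\cT_4$ has no solution.
\item $\cT_5$: here $\legendre{-q}{p}=\legendre{p}{q}$, while $\legendre{2p}{q}=-\legendre{p}{q}$. Both must equal $1$, which is impossible, so $\cT_5$ is excluded for both values of $\legendre{p}{q}$.
\end{itemize}

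For $\cT_6$ and $\cT_7$, Lemma~\ref{lem:T67} gives the following.
\begin{itemize}
\item $\cT_6$: the conditions are $\legendre{p}{q}=1$ and $\legendre{-2q}{p}=\legendre{q}{p}=\legendre{p}{q}=1$. The congruence condition holds automatically because $p\equiv1\pmod 8$.
\item $\cT_7$: the conditions are $\legendre{-p}{q}=\legendre{p}{q}=1$ and $\legendre{2q}{p}=\legendre{p}{q}=1$. Since $p\not\equiv 7\pmod 8$, the congruence condition that must be checked is $2q-p\equiv1,9\pmod{16}$. This is the one step needing a small computation, and I expect it to be the main point to verify carefully. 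Because $q\equiv5\pmod 8$, we have $2q\equiv10\pmod{16}$. Because $p\equiv1\pmod 8$, we have $p\equiv1$ or $9\pmod{16}$. Hence $2q-p\equiv 9$ or $1\pmod{16}$, so the condition always holds.
\end{itemize}
Consequently both $\cT_6$ and $\cT_7$ survive precisely when $\legendre{p}{q}=1$ and are excluded when $\legendre{p}{q}=-1$. The torsor $\cT_6$ contributes the pair $p,-2q$, and $\cT_7$ contributes the pair $-p,2q$.

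Combining the surviving classes with $1$ and $-2pq$ gives the two sets in the statement. When $\legendre{p}{q}=1$ the set is $\{1,-1,p,-p,2q,-2q,2pq,-2pq\}$. When $\legendre{p}{q}=-1$ only $\{1,-1,2pq,-2pq\}$ remains.
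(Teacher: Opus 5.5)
Your proposal is correct and follows essentially the paper's approach: the paper states this lemma without proof, but the intended argument is the same torsor-by-torsor check of $\cT_1,\dots,\cT_7$ against Lemmas~\ref{lem:T1}--\ref{lem:T67} that the paper carries out for Lemma~\ref{lem:alpha_case_2.1}, and your Legendre-symbol evaluations and the check $2q-p\equiv 9$ or $1\pmod{16}$ for $\cT_7$ are right. As a side remark, $\cT_4$ and $\cT_5$ also fail their congruence conditions, since $q-2p\equiv 3,11\pmod{16}$ and $2p-q\equiv 13,5\pmod{16}$, so they are excluded on two independent grounds.
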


\begin{lem}
\label{lem:alpha_case_2.3}
If $(p,q)\equiv (7,3)\pmod 8$, then
\[
\alpha\left(E_{-2pq}(\bbQ)\right)\subseteq
\begin{cases}
\{1,p,-2q,-2pq\},
& \text{for } \legendre{p}{q}=1,\\
\{1,-p,2q,-2pq\},
& \text{for } \legendre{p}{q}=-1.
\end{cases}
\]
\end{lem}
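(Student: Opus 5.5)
We prove Lemma~\ref{lem:alpha_case_2.3} in the same way as Lemma~\ref{lem:alpha_case_2.1}: we run through the seven torsors $\cT_1,\dots,\cT_7$ of Table~\ref{table:T-torsors} and test the necessary conditions of Lemmas~\ref{lem:T1}--\ref{lem:T67} with $p\equiv 7$, $q\equiv 3\pmod 8$. By Remark~\ref{rem:alpha_case}, and since $b_1$ and $b_2=-2pq/b_1$ come in together, each torsor that survives contributes its pair $\{b_1,b_2\}$. The classes $1$ and $-2pq$ are always present.

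\emph{The torsors that are excluded outright.}
\begin{itemize}
\item $\cT_1$: Lemma~\ref{lem:T1} requires $p\equiv 1\pmod 4$, which fails.
\item $\cT_2$: Lemma~\ref{lem:T2} requires $\left(\frac{2}{q}\right)=1$, which fails because $q\equiv 3\pmod 8$.
\item $\cT_3$: Lemma~\ref{lem:T3} requires $p\equiv 1,3\pmod 8$, which fails.
\item $\cT_4$: we have $q\not\equiv 1\pmod 8$, and $q-2p\equiv 3-14\equiv 5\pmod 8$. So $q-2p\not\equiv 1,9\pmod{16}$, and the congruence condition of Lemma~\ref{lem:T45} fails.
\item $\cT_5$: we have $q\not\equiv 7\pmod 8$, and $2p-q\equiv 14-3\equiv 3\pmod 8$. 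So the congruence condition of Lemma~\ref{lem:T45} fails again.
\end{itemize}

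\emph{The remaining torsors $\cT_6,\cT_7$.} Here the step needing care is the Legendre-symbol bookkeeping. Since $p\equiv q\equiv 3\pmod 4$, quadratic reciprocity gives $\left(\frac{q}{p}\right)=-\left(\frac{p}{q}\right)$. Since $p\equiv 7\pmod 8$, we have $\left(\frac{2}{p}\right)=1$ and $\left(\frac{-1}{p}\right)=-1$, so $\left(\frac{-2}{p}\right)=-1$. Since $q\equiv 3\pmod 4$, we have $\left(\frac{-1}{q}\right)=-1$.
\begin{itemize}
\item $\cT_6$: the congruence condition of Lemma~\ref{lem:T67} can hold, because $p-2q\equiv 7-6\equiv 1\pmod 8$. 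The symbol conditions read $\left(\frac{p}{q}\right)=1$ and $\left(\frac{-2q}{p}\right)=-\left(\frac{q}{p}\right)=\left(\frac{p}{q}\right)=1$. So $\cT_6$ can be solvable only when $\left(\frac{p}{q}\right)=1$, and it then contributes $p$ and $-2q$.
\item $\cT_7$: the congruence condition holds because $p\equiv 7\pmod 8$. The symbol conditions read $\left(\frac{-p}{q}\right)=-\left(\frac{p}{q}\right)=1$ and $\left(\frac{2q}{p}\right)=\left(\frac{q}{p}\right)=-\left(\frac{p}{q}\right)=1$. So $\cT_7$ can be solvable only when $\left(\frac{p}{q}\right)=-1$, and it then contributes $-p$ and $2q$.
\end{itemize}

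Combining the two cases gives the stated inclusions: $\{1,p,-2q,-2pq\}$ when $\left(\frac{p}{q}\right)=1$, and $\{1,-p,2q,-2pq\}$ when $\left(\frac{p}{q}\right)=-1$.
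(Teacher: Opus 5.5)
Your proof is correct and follows the paper's method: check each of $\cT_1,\dots,\cT_7$ against the necessary conditions of Lemmas~\ref{lem:T1}--\ref{lem:T67}. The paper states this lemma without proof, but its proof of Lemma~\ref{lem:alpha_case_2.1} works the same way, and your handling of the reciprocity sign $\legendre{q}{p}=-\legendre{p}{q}$ for $p\equiv q\equiv 3\pmod 4$ (which decides between $\cT_6$ and $\cT_7$) supplies the detail the paper leaves implicit.
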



\begin{lem}
\label{lem:alpha_case_2.4}
If $(p,q)\equiv (7,5)\pmod 8$, then
\[
\alpha\left(E_{-2pq}(\bbQ)\right)\subseteq
\begin{cases}
\{1,-p,2q,-2pq\},
& \text{for } \legendre{p}{q}=1,\\
\{1,-q,2p,-2pq\},
& \text{for } \legendre{p}{q}=-1.
\end{cases}
\]
\end{lem}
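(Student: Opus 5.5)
The plan is to run through the seven torsors $\cT_1,\ldots,\cT_7$ of Table~\ref{table:T-torsors} and apply the necessary conditions of Lemmas~\ref{lem:T1}--\ref{lem:T67}, as in the proof of Lemma~\ref{lem:alpha_case_2.1}. Let $p\equiv 7$ and $q\equiv 5\pmod 8$. First I record the relevant quadratic characters:
\[
\legendre{-1}{p}=-1,\quad \legendre{2}{p}=1,\quad \legendre{-2}{p}=-1,\qquad \legendre{-1}{q}=1,\quad \legendre{2}{q}=-1,\quad \legendre{-2}{q}=-1.
\]
Since $q\equiv 1\pmod 4$, quadratic reciprocity gives $\legendre{q}{p}=\legendre{p}{q}$. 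Every Legendre-symbol condition in the lemmas then becomes $\pm\legendre{p}{q}$.

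\emph{Torsors $\cT_1,\cT_2,\cT_3$ are eliminated.} $\cT_1$ is ruled out by Lemma~\ref{lem:T1}, because $p\not\equiv 1\pmod 4$. $\cT_2$ is ruled out by Lemma~\ref{lem:T2}, because $\legendre{2}{q}=-1$. $\cT_3$ is ruled out by Lemma~\ref{lem:T3}, because $\legendre{-2}{p}=-1$.

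\emph{$\cT_4$ and $\cT_6$ are eliminated for every value of $\legendre{p}{q}$.} Each requires two symbols that are negatives of one another.
\begin{itemize}
\item For $\cT_4$ we need $\legendre{q}{p}=\legendre{p}{q}$ and $\legendre{-2p}{q}=\legendre{-2}{q}\legendre{p}{q}=-\legendre{p}{q}$ both to equal $1$, which is impossible.
\item For $\cT_6$ we need $\legendre{p}{q}$ and $\legendre{-2q}{p}=\legendre{-2}{p}\legendre{q}{p}=-\legendre{p}{q}$ both to equal $1$, which is impossible.
\end{itemize}

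\emph{$\cT_5$ and $\cT_7$ survive, each for one value of $\legendre{p}{q}$.}
\begin{itemize}
\item For $\cT_5$ the symbols are $\legendre{-q}{p}=-\legendre{p}{q}$ and $\legendre{2p}{q}=-\legendre{p}{q}$. Both equal $1$ exactly when $\legendre{p}{q}=-1$. For the $2$-adic condition of Lemma~\ref{lem:T45}, note $2p\equiv 6$ or $14\pmod{16}$ and $q\equiv 5$ or $13\pmod{16}$. Checking the four combinations shows $2p-q\equiv 1$ or $9\pmod{16}$ always. So $-q$ and $2p$ may lie in the image when $\legendre{p}{q}=-1$.
\item For $\cT_7$ the symbols are $\legendre{-p}{q}=\legendre{p}{q}$ and $\legendre{2q}{p}=\legendre{q}{p}=\legendre{p}{q}$. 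Both equal $1$ exactly when $\legendre{p}{q}=1$, and the congruence condition $p\equiv 7\pmod 8$ of Lemma~\ref{lem:T67} holds. So $-p$ and $2q$ may lie in the image when $\legendre{p}{q}=1$.
\end{itemize}

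Adding the trivial elements $1$ and $-2pq$ gives the two stated sets. The only delicate point is keeping the signs straight in the Legendre-symbol bookkeeping and checking the mod-$16$ condition for $\cT_5$ over all residue combinations. The rest is a direct application of the preceding lemmas.
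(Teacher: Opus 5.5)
Your proposal is correct and uses the paper's method: a torsor-by-torsor elimination of $\cT_1,\ldots,\cT_7$ via the necessary conditions of Lemmas~\ref{lem:T1}--\ref{lem:T67}, as in the proof of Lemma~\ref{lem:alpha_case_2.1} (the paper states this case without a separate proof), and your sign bookkeeping and resulting sets agree with the statement. One harmless slip is that $p\equiv 7\pmod 8$ forces $2p\equiv 14\pmod{16}$, so the residue $6$ never occurs; this does not matter, because $2p-q\equiv 14-5\equiv 1\pmod 8$ gives $2p-q\equiv 1$ or $9\pmod{16}$ in every case.
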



\subsection*{Case: III }

Now we consider the case when $\left(\dfrac{2}{p}\right)=-1$ and
$\left(\dfrac{2}{q}\right)=1$, so that
$(p,q)\equiv(3,1),(3,7),(5,1),(5,7)\pmod 8$.
The four statements below are obtained, respectively, from
Lemmas~\ref{lem:alpha_case_2.1}, \ref{lem:alpha_case_2.3},
\ref{lem:alpha_case_2.2}, and \ref{lem:alpha_case_2.4}
by interchanging $p$ and $q$. For the class $(3,7)$, quadratic reciprocity
reverses the sign of the Legendre symbol; in the other three classes, it
preserves the sign.

\begin{lem}
\label{lem:alpha_case_3.1}
If $(p,q)\equiv (3,1)\pmod 8$, then
\[
\alpha\left(E_{-2pq}(\bbQ)\right)\subseteq
\begin{cases}
\{1,-2,p,q,-2p,-2q,pq,-2pq\},
& \text{for } \legendre{p}{q}=1,\\
\{1,-2,pq,-2pq\},
& \text{for } \legendre{p}{q}=-1.
\end{cases}
\]
\end{lem}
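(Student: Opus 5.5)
The plan is to obtain the statement from Lemma~\ref{lem:alpha_case_2.1} by interchanging $p$ and $q$, and to back this up by checking the seven torsors $\cT_1,\dots,\cT_7$ of Table~\ref{table:T-torsors} directly against Lemmas~\ref{lem:T1}--\ref{lem:T67}, exactly as in the proof of Lemma~\ref{lem:alpha_case_2.1}. The curve $E_{-2pq}$ is symmetric in $p$ and $q$. Swapping the roles of $p$ and $q$ turns the hypothesis $(p,q)\equiv(1,3)\pmod 8$ into $(p,q)\equiv(3,1)\pmod 8$. The set $\{1,-2,p,q,-2p,-2q,pq,-2pq\}$ is invariant under $p\leftrightarrow q$, as is $\{1,-2,pq,-2pq\}$. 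Since $q\equiv 1\pmod 4$, quadratic reciprocity gives $\legendre{p}{q}=\legendre{q}{p}$, so the case split by the Legendre symbol is also preserved.

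For the direct verification, take $p\equiv 3$, $q\equiv 1\pmod 8$ and go through the torsors.
\begin{itemize}
\item $\cT_1$ is impossible by Lemma~\ref{lem:T1}, since $p\equiv 3\pmod 4$.
\item $\cT_2$ is impossible by Lemma~\ref{lem:T2}, since $\legendre{2}{p}=-1$.
\item $\cT_3$ survives Lemma~\ref{lem:T3}, because $p\equiv 3$ and $q\equiv 1\pmod 8$ are both allowed. This contributes $-2$ and $pq$.
\item $\cT_4$: the condition $q\equiv 1\pmod 8$ holds. The Legendre conditions are $\legendre{q}{p}=1$ and $\legendre{-2p}{q}=\legendre{p}{q}=1$, using $\legendre{-1}{q}=\legendre{2}{q}=1$. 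Both reduce to $\legendre{p}{q}=1$, which contributes $q$ and $-2p$.
\item $\cT_5$ needs $\legendre{-q}{p}=1$. But $\legendre{-q}{p}=-\legendre{q}{p}=-\legendre{p}{q}$, so it requires $\legendre{p}{q}=-1$. It also needs $\legendre{2p}{q}=\legendre{p}{q}=1$. These contradict each other, so $\cT_5$ is impossible.
\item $\cT_6$ needs $\legendre{p}{q}=1$ and $\legendre{-2q}{p}=\legendre{q}{p}=1$, using $\legendre{-2}{p}=1$ for $p\equiv 3\pmod 8$. The congruence condition is $p-2q\equiv 3-2\equiv 1\pmod 8$, so $p-2q\equiv 1$ or $9\pmod{16}$ holds. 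This contributes $p$ and $-2q$ when $\legendre{p}{q}=1$.
\item $\cT_7$ needs either $p\equiv 7\pmod 8$ or $2q-p\equiv 1,9\pmod{16}$. Here $p\not\equiv 7$ and $2q-p\equiv 2-3\equiv 7\pmod 8$, so both fail and $\cT_7$ is impossible.
\end{itemize}

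Collecting the survivors and their complementary classes gives the claimed sets. When $\legendre{p}{q}=1$ we get $\{1,-2pq,-2,pq,q,-2p,p,-2q\}$. When $\legendre{p}{q}=-1$ only $\cT_3$ remains, giving $\{1,-2pq,-2,pq\}$.

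The main obstacle is the sign bookkeeping in $\cT_5$ and $\cT_7$. One has to use $\legendre{-1}{p}=-1$ together with reciprocity to see that the two Legendre conditions in $\cT_5$ conflict. One also has to get the $2q-p$ residue right modulo $8$. It suffices to work modulo $8$ there, since $1$ and $9$ are the only odd squares modulo $16$.
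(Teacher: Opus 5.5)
Your proposal is correct and follows the same route as the paper, which obtains this lemma from Lemma~\ref{lem:alpha_case_2.1} by interchanging $p$ and $q$, noting that reciprocity preserves the sign of $\legendre{p}{q}$ because $q\equiv 1\pmod 4$. Your torsor-by-torsor check against Lemmas~\ref{lem:T1}--\ref{lem:T67} is accurate. The only variation is that you rule out $\cT_5$ through the Legendre-symbol conflict rather than the congruence $2p-q\equiv 5\pmod 8$; both arguments work.
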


\begin{lem}
\label{lem:alpha_case_3.2}
If $(p,q)\equiv (3,7)\pmod 8$, then
\[
\alpha\left(E_{-2pq}(\bbQ)\right)\subseteq
\begin{cases}
\{1,-q,2p,-2pq\},
& \text{for } \legendre{p}{q}=1,\\
\{1,q,-2p,-2pq\},
& \text{for } \legendre{p}{q}=-1.
\end{cases}
\]
\end{lem}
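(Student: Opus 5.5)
The plan is to check the seven torsors $\cT_1,\ldots,\cT_7$ of Table~\ref{table:T-torsors} one at a time against the necessary conditions of Lemmas~\ref{lem:T1}--\ref{lem:T67}, with $p\equiv 3$ and $q\equiv 7\pmod 8$. As in Lemma~\ref{lem:alpha_case_2.1}, every torsor that cannot be ruled out contributes its pair $\{b_1,b_2\}$ of square classes. These pairs are then added to the trivial classes $1,-2pq$.

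The facts to use are the following:
\begin{itemize}
\item $\legendre{-1}{p}=\legendre{2}{p}=-1$ and $\legendre{-2}{p}=1$;
\item $\legendre{-1}{q}=-1$, $\legendre{2}{q}=1$ and $\legendre{-2}{q}=-1$;
\item since $p\equiv q\equiv 3\pmod 4$, quadratic reciprocity gives $\legendre{q}{p}=-\legendre{p}{q}$. This sign reversal is the one point needing care.
\end{itemize}

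\textbf{The torsors $\cT_1$, $\cT_2$, $\cT_3$.} These are excluded immediately. $\cT_1$ requires $p\equiv 1\pmod 4$ (Lemma~\ref{lem:T1}). $\cT_2$ requires $p\equiv 1,7\pmod 8$ (Lemma~\ref{lem:T2}). $\cT_3$ requires $q\equiv 1,3\pmod 8$ (Lemma~\ref{lem:T3}).

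\textbf{The torsor $\cT_4$.} Lemma~\ref{lem:T45} needs $\legendre{q}{p}=1$, that is, $\legendre{p}{q}=-1$. It also needs $\legendre{-2p}{q}=-\legendre{p}{q}=1$, which is the same condition. For the congruence condition, $q\not\equiv 1\pmod 8$, but $q-2p\equiv 7-6\equiv 1\pmod 8$, so $q-2p\equiv 1,9\pmod{16}$ is compatible. Hence $q$ and $-2p$ survive exactly when $\legendre{p}{q}=-1$.

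\textbf{The torsor $\cT_5$.} Lemma~\ref{lem:T45} needs $\legendre{-q}{p}=-\legendre{q}{p}=\legendre{p}{q}=1$ and $\legendre{2p}{q}=\legendre{p}{q}=1$. The congruence condition holds because $q\equiv 7\pmod 8$. Hence $-q$ and $2p$ survive exactly when $\legendre{p}{q}=1$.

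\textbf{The torsors $\cT_6$, $\cT_7$.} For $\cT_6$, Lemma~\ref{lem:T67} needs $\legendre{p}{q}=1$ and $\legendre{-2q}{p}=\legendre{q}{p}=-\legendre{p}{q}=1$. These are contradictory, so $\cT_6$ is excluded. For $\cT_7$, it needs $\legendre{-p}{q}=-\legendre{p}{q}=1$, so $\legendre{p}{q}=-1$. It also needs $\legendre{2q}{p}=-\legendre{q}{p}=\legendre{p}{q}=1$. Again these are contradictory, so $\cT_7$ is excluded.

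Collecting the survivors gives $\{1,-q,2p,-2pq\}$ when $\legendre{p}{q}=1$ and $\{1,q,-2p,-2pq\}$ when $\legendre{p}{q}=-1$, as claimed.

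As a cross-check, Lemma~\ref{lem:alpha_case_2.3} with $p\leftrightarrow q$ gives the same sets. The interchange turns its condition $\legendre{q}{p}=\pm1$ into $\legendre{p}{q}=\mp1$ by reciprocity, which matches the remark preceding Case~III. The main obstacle is purely bookkeeping: tracking the reciprocity sign correctly in $\cT_4$--$\cT_7$.
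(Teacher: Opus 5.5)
Your proof is correct. You track the reciprocity sign $\legendre{q}{p}=-\legendre{p}{q}$ properly in $\cT_4$--$\cT_7$, and the surviving pairs are exactly $\{-q,2p\}$ for $\legendre{p}{q}=1$ and $\{q,-2p\}$ for $\legendre{p}{q}=-1$. The paper gives no separate argument for this lemma: it obtains it from Lemma~\ref{lem:alpha_case_2.3} by interchanging $p$ and $q$, using the reciprocity sign reversal, which is precisely your cross-check. Your direct torsor-by-torsor verification is the same method the paper uses in Lemmas~\ref{lem:alpha_case_2.1} and~\ref{lem:alpha_case_4.2}, so it is essentially the paper's approach.
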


\begin{lem}
\label{lem:alpha_case_3.3}
If $(p,q)\equiv (5,1)\pmod 8$, then
\[
\alpha\left(E_{-2pq}(\bbQ)\right)\subseteq
\begin{cases}
\{1,-1,q,-q,2p,-2p,2pq,-2pq\},
& \text{for } \legendre{p}{q}=1,\\
\{1,-1,2pq,-2pq\},
& \text{for } \legendre{p}{q}=-1.
\end{cases}
\]
\end{lem}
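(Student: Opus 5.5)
The statement to prove is Lemma~\ref{lem:alpha_case_3.3}: the case $(p,q)\equiv(5,1)\pmod 8$. The quickest route is to interchange $p$ and $q$ in Lemma~\ref{lem:alpha_case_2.2}, as the paper announces. The curve $E_{-2pq}$ is symmetric in $p,q$. Under the swap, the torsors are permuted: $\cT_1,\cT_2,\cT_3$ are fixed, while $\cT_4\leftrightarrow\cT_6$ and $\cT_5\leftrightarrow\cT_7$ (Lemma~\ref{lem:T67}). The residue class $(1,5)$ goes to $(5,1)$, and since one prime is $\equiv 1\pmod 4$, quadratic reciprocity gives $\legendre{p}{q}=\legendre{q}{p}$, so the Legendre condition is preserved. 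Swapping $p,q$ in $\{1,-1,p,-p,2q,-2q,2pq,-2pq\}$ gives exactly the claimed set $\{1,-1,q,-q,2p,-2p,2pq,-2pq\}$, and likewise for the $\legendre{p}{q}=-1$ set.

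As a safeguard, since Lemma~\ref{lem:alpha_case_2.2} was stated without proof, I would also check the seven torsors directly with $p\equiv5$, $q\equiv1\pmod 8$.

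\emph{$\cT_1$.} Lemma~\ref{lem:T1} requires $p\equiv q\equiv1\pmod 4$ and $pq\equiv1,5\pmod 8$. Here $pq\equiv5\pmod 8$, so $\cT_1$ is allowed, and $-1$ and $2pq$ may lie in the image.

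\emph{$\cT_2,\cT_3$.} These need $\legendre{\pm2}{p}=1$, which fails because $p\equiv5\pmod 8$. So $\pm2$ and $\mp pq$ are excluded.

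\emph{$\cT_4,\cT_5$.} Since $p\equiv5\pmod 8$, we have $\legendre{-1}{p}=1$ and $\legendre{2}{p}=-1$. Also $\legendre{-1}{q}=\legendre{2}{q}=1$ and $\legendre{q}{p}=\legendre{p}{q}$.
\begin{itemize}
\item For $\cT_4$, the conditions become $\legendre{p}{q}=1$ and $\legendre{-2p}{q}=\legendre{p}{q}=1$. The 2-adic condition holds since $q\equiv1\pmod 8$, so $q$ and $-2p$ may occur.
\item For $\cT_5$, we need $\legendre{-q}{p}=\legendre{p}{q}=1$ and $\legendre{2p}{q}=1$. The 2-adic condition is either $q\equiv7\pmod 8$ (false) or $2p-q\equiv1,9\pmod{16}$. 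Since $2p\equiv10\pmod{16}$ and $q\equiv1,9\pmod{16}$, we get $2p-q\equiv9,1\pmod{16}$, so the condition holds. Thus $-q$ and $2p$ may occur.
\end{itemize}

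\emph{$\cT_6,\cT_7$.}
\begin{itemize}
\item $\cT_6$ needs $\legendre{-2q}{p}=\legendre{2}{p}\legendre{q}{p}=-\legendre{p}{q}$ together with $\legendre{p}{q}=1$; these are contradictory, so $\cT_6$ is excluded.
\item $\cT_7$ needs $\legendre{-p}{q}=\legendre{p}{q}=1$ and $\legendre{2q}{p}=-\legendre{p}{q}=1$, again contradictory, so $\cT_7$ is excluded.
\end{itemize}
This removes $\pm p$ and $\mp2q$.

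Collecting the allowed classes gives the stated sets. When $\legendre{p}{q}=-1$, $\cT_4$ and $\cT_5$ also die, leaving $\{1,-1,2pq,-2pq\}$.

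The main obstacle is the 2-adic check for $\cT_5$, that is, getting the residue of $2p-q$ modulo $16$ right. The other delicate point is the sign of the Legendre symbols, where $\legendre{2}{p}=-1$ produces the incompatibility that kills $\cT_6$ and $\cT_7$.
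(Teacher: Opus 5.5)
Your proposal is correct and matches the paper's argument. The paper obtains this lemma from Lemma~\ref{lem:alpha_case_2.2} by interchanging $p$ and $q$, noting that quadratic reciprocity preserves $\legendre{p}{q}$ in the class $(5,1)$. Your direct check of all seven torsors is also accurate: it includes the $2$-adic computation $2p-q\equiv 1,9\pmod{16}$ for $\cT_5$, and the sign clash from $\legendre{2}{p}=-1$ that rules out $\cT_6$ and $\cT_7$. That check fills in the verification the paper leaves implicit, in the same style as its proof of Lemma~\ref{lem:alpha_case_2.1}.
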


\begin{lem}
\label{lem:alpha_case_3.4}
If $(p,q)\equiv (5,7)\pmod 8$, then
\[
\alpha\left(E_{-2pq}(\bbQ)\right)\subseteq
\begin{cases}
\{1,-q,2p,-2pq\},
& \text{for } \legendre{p}{q}=1,\\
\{1,-p,2q,-2pq\},
& \text{for } \legendre{p}{q}=-1.
\end{cases}
\]
\end{lem}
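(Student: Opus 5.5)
The plan is to follow the proof of Lemma~\ref{lem:alpha_case_2.1} and run through the seven torsors $\cT_1,\dots,\cT_7$ of Table~\ref{table:T-torsors}, using Lemmas~\ref{lem:T1}--\ref{lem:T67} to eliminate each one whose necessary conditions fail when $p\equiv 5$ and $q\equiv 7\pmod 8$. The statement could also be deduced from Lemma~\ref{lem:alpha_case_2.4} by interchanging $p$ and $q$, as the remark preceding Case III indicates. However, Lemma~\ref{lem:alpha_case_2.4} was not proved in the text, so I will verify the conditions directly.

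The local data I need are these.
\begin{itemize}
\item Since $p\equiv 5\pmod 8$: $\legendre{-1}{p}=1$ and $\legendre{2}{p}=\legendre{-2}{p}=-1$.
\item Since $q\equiv 7\pmod 8$: $\legendre{-1}{q}=-1$, $\legendre{2}{q}=1$ and $\legendre{-2}{q}=-1$.
\item Since $p\equiv 1\pmod 4$, quadratic reciprocity gives $\legendre{q}{p}=\legendre{p}{q}$.
\end{itemize}

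With these, the first three torsors are eliminated outright.
\begin{itemize}
\item $\cT_1$ dies by Lemma~\ref{lem:T1}, because $q\not\equiv 1\pmod 4$.
\item $\cT_2$ dies by Lemma~\ref{lem:T2}, because $\legendre{2}{p}=-1$.
\item $\cT_3$ dies by Lemma~\ref{lem:T3}, because $p\not\equiv 1,3\pmod 8$.
\end{itemize}

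For the remaining four torsors I express every Legendre-symbol condition of Lemmas~\ref{lem:T45} and \ref{lem:T67} in terms of $\legendre{p}{q}$.
\begin{itemize}
\item $\cT_4$ requires $\legendre{q}{p}=\legendre{p}{q}=1$ together with $\legendre{-2p}{q}=-\legendre{p}{q}=1$. These are incompatible, so $\cT_4$ is never solvable.
\item $\cT_6$ requires $\legendre{p}{q}=1$ together with $\legendre{-2q}{p}=\legendre{-2}{p}\legendre{q}{p}=-\legendre{p}{q}=1$. Again these are contradictory, so $\cT_6$ is never solvable.
\item $\cT_5$ requires $\legendre{-q}{p}=\legendre{p}{q}=1$ and $\legendre{2p}{q}=\legendre{p}{q}=1$. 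Its congruence condition $q\equiv 7\pmod 8$ holds automatically. So $\cT_5$ survives exactly when $\legendre{p}{q}=1$, contributing the classes $-q$ and $2p$.
\item $\cT_7$ requires $\legendre{-p}{q}=-\legendre{p}{q}=1$ and $\legendre{2q}{p}=-\legendre{p}{q}=1$, i.e.\ $\legendre{p}{q}=-1$. For its congruence condition, $2q\equiv 14\pmod{16}$ and $p\equiv 5$ or $13\pmod{16}$, so $2q-p\equiv 9$ or $1\pmod{16}$. Hence $\cT_7$ survives exactly when $\legendre{p}{q}=-1$, contributing $-p$ and $2q$.
\end{itemize}

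Finally I adjoin the trivial classes $1$ and $-2pq$, and the result is exactly the two four-element sets in the statement. No step is a real obstacle. The only place where care is needed is the sign bookkeeping in $\cT_4$ and $\cT_6$, where the factors $\legendre{-1}{q}=-1$ and $\legendre{2}{p}=-1$ produce the contradictions. I would double-check each of those sign computations, and also the mod $16$ computation for $\cT_7$, since an error in either would change which of the two sets appears for a given value of $\legendre{p}{q}$.
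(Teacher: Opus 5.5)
Your proof is correct: the sign computations ($\legendre{-2p}{q}=-\legendre{p}{q}$, $\legendre{-2q}{p}=-\legendre{p}{q}$, and $2q-p\equiv 9$ or $1\pmod{16}$) all check out, and the torsor-by-torsor elimination via Lemmas~\ref{lem:T1}--\ref{lem:T67} is exactly the paper's method. The paper itself just obtains this lemma from Lemma~\ref{lem:alpha_case_2.4} by swapping $p$ and $q$, using $\legendre{q}{p}=\legendre{p}{q}$ since $p\equiv 1\pmod 4$, so your direct check is the same argument made self-contained.
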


\subsection*{Case: IV}

Finally, we shall consider the case when $\left(\dfrac{2}{p}\right)=-1=
\left(\dfrac{2}{q}\right)$. It gives 
$(p,q)\equiv (3,3),(3,5),(5,3),(5,5)\pmod 8$.

\begin{lem}
\label{lem:alpha_case_4.1}
If $(p,q)\equiv (3,3)\pmod 8$, then
\[
\alpha\left(E_{-2pq}(\bbQ)\right)\subseteq \{1,-2,pq,-2pq\}
\]
for both $\legendre{p}{q}=1$ and $\legendre{p}{q}=-1$.
\end{lem}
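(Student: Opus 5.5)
We run through the seven torsors $\cT_1,\dots,\cT_7$ of Table~\ref{table:T-torsors}, as in the proofs of Lemmas~\ref{lem:alpha_case_1.1} and \ref{lem:alpha_case_2.1}. For $(p,q)\equiv(3,3)\pmod 8$ we show that every torsor except $\cT_3$ is insoluble under \eqref{eq:gcd}, whatever the value of $\legendre{p}{q}$. Since $1,-2pq\in\alpha(E_{-2pq}(\bbQ))$ by definition, and $\cT_3$ contributes only the complementary pair $-2,pq$, this gives the inclusion $\alpha(E_{-2pq}(\bbQ))\subseteq\{1,-2,pq,-2pq\}$.

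The easy torsors are handled directly by the earlier lemmas.
\begin{itemize}
\item $\cT_1$: by Lemma~\ref{lem:T1} it needs $p\equiv q\equiv 1\pmod 4$, which fails since $p,q\equiv 3\pmod 8$.
\item $\cT_2$: by Lemma~\ref{lem:T2} it needs $\legendre{2}{p}=1$, which fails since $p\equiv 3\pmod 8$.
\item $\cT_4$: Lemma~\ref{lem:T45} requires either $q\equiv 1\pmod 8$ or $q-2p\equiv 1,9\pmod{16}$. The first fails. For the second, $q-2p\equiv 3-6\equiv 5\pmod 8$, so it fails too.
\item $\cT_5$: it requires either $q\equiv 7\pmod 8$ or $2p-q\equiv 1\pmod 8$. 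But $2p-q\equiv 6-3\equiv 3\pmod 8$, so both fail.
\item $\cT_6,\cT_7$: by Lemma~\ref{lem:T67}, or by symmetry in $p,q$, we have $p-2q\equiv 5$ and $2q-p\equiv 3\pmod 8$, with $p\not\equiv 1,7\pmod 8$. So both fail.
\end{itemize}
None of these arguments uses the Legendre-symbol conditions, which explains why the conclusion is the same for both values of $\legendre{p}{q}$.

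For $\cT_3$ we only need that $-2$ and $pq$ \emph{may} lie in the image. Lemma~\ref{lem:T3} is consistent with this: $p,q\equiv 3\pmod 8$ satisfy $\legendre{-2}{p}=\legendre{-2}{q}=1$, and $pq\equiv 9\pmod{16}$ or $pq\equiv 1\pmod{16}$ lies among the allowed classes. We therefore keep $-2$ and $pq$ as possible elements.

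The only real care needed is in the mod-$16$ bookkeeping for $\cT_4$--$\cT_7$. We must check that the reduction mod $8$ already excludes both residues $1$ and $9$ mod $16$, which it does since neither $5$ nor $3$ is $1\pmod 8$. There is no substantive obstacle beyond this; the key point is that every torsor other than $\cT_3$ is excluded by a congruence obstruction alone.
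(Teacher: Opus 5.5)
Your proof is correct and follows the paper's own argument. You rule out $\cT_1$ and $\cT_2$ by Lemmas~\ref{lem:T1} and \ref{lem:T2}, rule out $\cT_4,\dots,\cT_7$ by the congruence alternatives in Lemmas~\ref{lem:T45} and \ref{lem:T67}, and keep only the complementary pair $\{-2,pq\}$ from $\cT_3$; the residues you compute ($q-2p\equiv p-2q\equiv 5$, $2p-q\equiv 2q-p\equiv 3\pmod 8$, and $pq\equiv 1,9\pmod{16}$) are right and simply make explicit the checks the paper leaves implicit.
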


\begin{proof}
Here $\cT_{1}$ and $\cT_{2}$ have no integer solutions by Lemmas~\ref{lem:T1} and \ref{lem:T2}. Since $p\equiv q\equiv 3\pmod 8$, Lemma~\ref{lem:T3} shows that $\cT_{3}$ may have integer solutions. Hence $-2$ and $pq$ may be elements of $\alpha\left(E_{-2pq}(\bbQ)\right)$. Also, Lemmas~\ref{lem:T45} and \ref{lem:T67} show that $\cT_{4},\cT_{5},\cT_{6}$ and $\cT_{7}$ have no integer solutions. Therefore, we get the desired result.
\end{proof}

\begin{lem}
\label{lem:alpha_case_4.2}
If $(p,q)\equiv (3,5)\pmod 8$, then
\[
\alpha\left(E_{-2pq}(\bbQ)\right)\subseteq
\begin{cases}
\{1,p,-2q,-2pq\},
& \text{for } \legendre{p}{q}=1,\\
\{1,-q,2p,-2pq\},
& \text{for } \legendre{p}{q}=-1.
\end{cases}
\]
\end{lem}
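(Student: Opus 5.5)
The plan is to run through the seven torsors $\cT_1,\dots,\cT_7$ of Table~\ref{table:T-torsors} one at a time, using the necessary conditions of Lemmas~\ref{lem:T1}--\ref{lem:T67}, exactly as in the proof of Lemma~\ref{lem:alpha_case_2.1}. By Remark~\ref{rem:alpha_case}, any element of $\alpha(E_{-2pq}(\bbQ))$ other than $1$ and $-2pq$ must come from a solvable torsor. Each solvable torsor $\cT_i$ contributes the pair $b_1, b_2$.

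Throughout, $p\equiv 3$ and $q\equiv 5 \pmod 8$. I will use the following facts:
\begin{itemize}
\item $\legendre{-1}{p}=-1$, $\legendre{2}{p}=-1$, and $\legendre{-2}{p}=1$;
\item $\legendre{-1}{q}=1$ and $\legendre{2}{q}=-1$;
\item since $q\equiv 1\pmod 4$, quadratic reciprocity gives $\legendre{q}{p}=\legendre{p}{q}$.
\end{itemize}

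First I eliminate the torsors that fail for congruence reasons alone.
\begin{itemize}
\item $\cT_1$ is impossible by Lemma~\ref{lem:T1}, since $p\not\equiv 1\pmod 4$.
\item $\cT_2$ is impossible by Lemma~\ref{lem:T2}, since $\legendre{2}{p}=-1$.
\item $\cT_3$ is impossible by Lemma~\ref{lem:T3}, since $q\equiv 5\pmod 8$ is not $1$ or $3 \pmod 8$.
\item $\cT_4$ is impossible by Lemma~\ref{lem:T45}: $q\not\equiv 1\pmod 8$, and $q-2p\equiv 5-6\equiv 7\pmod 8$, so $q-2p\not\equiv 1,9\pmod{16}$.
\item $\cT_7$ is impossible by Lemma~\ref{lem:T67}: $p\not\equiv 7\pmod 8$, and $2q-p\equiv 10-3\equiv 7\pmod 8$. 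Alternatively, its two Legendre conditions read $\legendre{-p}{q}=\legendre{p}{q}=1$ and $\legendre{2q}{p}=-\legendre{p}{q}=1$, which contradict each other.
\end{itemize}

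The remaining torsors $\cT_5$ and $\cT_6$ are where the dependence on $\legendre{p}{q}$ enters.

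For $\cT_5$, the congruence condition holds, since $2p-q\equiv 6-5\equiv 1\pmod 8$. Its Legendre conditions become
\[
\legendre{-q}{p}=\legendre{-1}{p}\legendre{p}{q}=-\legendre{p}{q}=1 \quad\text{and}\quad \legendre{2p}{q}=-\legendre{p}{q}=1 .
\]
Both hold exactly when $\legendre{p}{q}=-1$, in which case $-q$ and $2p$ may lie in the image.

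For $\cT_6$, the congruence condition holds, since $p-2q\equiv 3-10\equiv 1\pmod 8$. Its Legendre conditions become
\[
\legendre{p}{q}=1 \quad\text{and}\quad \legendre{-2q}{p}=\legendre{-2}{p}\legendre{p}{q}=\legendre{p}{q}=1 .
\]
Both hold exactly when $\legendre{p}{q}=1$, in which case $p$ and $-2q$ may lie in the image.

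Adjoining $1$ and $-2pq$ then gives $\{1,p,-2q,-2pq\}$ when $\legendre{p}{q}=1$, and $\{1,-q,2p,-2pq\}$ when $\legendre{p}{q}=-1$, as claimed. The only delicate point is the sign bookkeeping in the Legendre symbols for $\cT_5$ and $\cT_6$. In particular, one must use $q\equiv 1\pmod 4$ so that reciprocity gives no sign change, together with $\legendre{-2}{p}=1$ for $p\equiv 3\pmod 8$. The mod-$8$ checks of the auxiliary quantities $q-2p$, $2p-q$, $p-2q$ and $2q-p$ are routine.
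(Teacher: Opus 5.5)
Your proposal is correct and follows the same torsor-by-torsor elimination as the paper. You use Lemmas~\ref{lem:T1}--\ref{lem:T67} to rule out $\cT_1,\cT_2,\cT_3,\cT_4,\cT_7$, and to tie $\cT_6$ to $\legendre{p}{q}=1$ and $\cT_5$ to $\legendre{p}{q}=-1$. Your explicit Legendre-symbol sign bookkeeping and the mod-$8$ checks of $q-2p$, $2p-q$, $p-2q$, $2q-p$ just fill in details that the paper's proof asserts without verification.
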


\begin{proof}
From Lemmas~\ref{lem:T1}, \ref{lem:T2} and \ref{lem:T3}, the torsors $\cT_{1},\cT_{2}$ and $\cT_{3}$ have no integer solutions. Also, $\cT_{4}$ and $\cT_{7}$ have no integer solutions. If $\left(\dfrac{p}{q}\right)=1$, then $\cT_{6}$ may have integer solutions, and hence $p$ and $-2q$ may be elements of $\alpha\left(E_{-2pq}(\bbQ)\right)$. If $\left(\dfrac{p}{q}\right)=-1$, then $\cT_{5}$ may have integer solutions, and hence $-q$ and $2p$ may be elements of $\alpha\left(E_{-2pq}(\bbQ)\right)$. Combining these possibilities, we get the desired result.
\end{proof}

Interchanging $p$ and $q$, we get the following Lemma. 

\begin{lem}
\label{lem:alpha_case_4.3}
If $(p,q)\equiv (5,3)\pmod 8$, then
\[
\alpha\left(E_{-2pq}(\bbQ)\right)\subseteq
\begin{cases}
\{1,q,-2p,-2pq\},
& \text{for } \legendre{p}{q}=1,\\
\{1,-p,2q,-2pq\},
& \text{for } \legendre{p}{q}=-1.
\end{cases}
\]
\end{lem}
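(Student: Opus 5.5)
The plan is to deduce the statement from Lemma~\ref{lem:alpha_case_4.2} by interchanging the roles of $p$ and $q$, as the sentence preceding the statement indicates. The only points needing care are the behaviour of the Legendre symbol under the swap and the fact that the list of torsors is symmetric in $p$ and $q$. I will also indicate how the result can be checked directly from Lemmas~\ref{lem:T1}--\ref{lem:T67}, as a safeguard.

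\emph{Step 1 (symmetry of the setup).} The curve $E_{-2pq}$, and hence the group $\alpha(E_{-2pq}(\bbQ))$, is unchanged when $p$ and $q$ are interchanged. Under the swap $p\leftrightarrow q$, the torsors $\cT_1,\cT_2,\cT_3$ are fixed, while $\cT_4\leftrightarrow\cT_6$ and $\cT_5\leftrightarrow\cT_7$, exactly as used in Lemma~\ref{lem:T67}. Let $(p',q')=(q,p)$. Then $(p',q')\equiv(3,5)\pmod 8$, so Lemma~\ref{lem:alpha_case_4.2} applies to the pair $(p',q')$.

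\emph{Step 2 (Legendre symbol).} Since $p\equiv 5\equiv 1\pmod 4$, quadratic reciprocity gives $\legendre{p'}{q'}=\legendre{q}{p}=\legendre{p}{q}$, so the sign is preserved. This is consistent with the remark in Case~III that reciprocity reverses the sign only when both primes are $3\pmod 4$.

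\emph{Step 3 (translation).} If $\legendre{p}{q}=1$, Lemma~\ref{lem:alpha_case_4.2} gives $\alpha\subseteq\{1,p',-2q',-2p'q'\}=\{1,q,-2p,-2pq\}$. If $\legendre{p}{q}=-1$, it gives $\alpha\subseteq\{1,-q',2p',-2p'q'\}=\{1,-p,2q,-2pq\}$. These are exactly the two claimed sets.

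\emph{Step 4 (direct check).} As a cross-check I would rerun the torsor analysis for $(p,q)\equiv(5,3)\pmod 8$.
\begin{itemize}
\item $\cT_1$ dies because $q\equiv 3\pmod 4$ (Lemma~\ref{lem:T1}).
\item $\cT_2$ dies because $\legendre{2}{p}=-1$ (Lemma~\ref{lem:T2}).
\item $\cT_3$ dies because $\legendre{-2}{p}=-1$ (Lemma~\ref{lem:T3}).
\item $\cT_5$ dies because $\legendre{-q}{p}=\legendre{-1}{p}\legendre{q}{p}=\legendre{p}{q}$ and $\legendre{2p}{q}=-\legendre{p}{q}$ cannot both equal $1$.
\item $\cT_6$ dies because $\legendre{p}{q}$ and $\legendre{-2q}{p}=-\legendre{p}{q}$ cannot both equal $1$.
\item $\cT_4$ survives only when $\legendre{p}{q}=1$: its conditions $\legendre{q}{p}=1$ and $\legendre{-2p}{q}=\legendre{p}{q}=1$ hold, and $q-2p\equiv 1\pmod 8$, so the congruence condition of Lemma~\ref{lem:T45} is not excluded.
\item $\cT_7$ survives only when $\legendre{p}{q}=-1$: its conditions $\legendre{-p}{q}=-\legendre{p}{q}=1$ and $\legendre{2q}{p}=-\legendre{p}{q}=1$ hold, and $2q-p\equiv 1\pmod 8$.
\end{itemize}

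The argument is essentially bookkeeping. The only real obstacle is getting the Legendre-symbol signs right, in particular the factors $\legendre{2}{\cdot}$ and $\legendre{-1}{\cdot}$ in Step 4, and the behaviour of reciprocity under the swap in Step 2. I would double-check those signs carefully, since an error there would swap the two cases.
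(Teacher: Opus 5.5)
Your proposal is correct and matches the paper, which obtains this lemma from Lemma~\ref{lem:alpha_case_4.2} simply by interchanging $p$ and $q$. You also make explicit the point the paper leaves implicit, namely that $\legendre{q}{p}=\legendre{p}{q}$ because $p\equiv 1\pmod 4$; your direct torsor-by-torsor check in Step~4 is likewise correct and a useful confirmation of the signs, though not needed.
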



\begin{lem}
\label{lem:alpha_case_4.4}
If $(p,q)\equiv (5,5)\pmod 8$, then
\[
\alpha\left(E_{-2pq}(\bbQ)\right)\subseteq \{1,-1,2pq,-2pq\}
\]
for both $\legendre{p}{q}=1$ and $\legendre{p}{q}=-1$.
\end{lem}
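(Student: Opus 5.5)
The plan is to argue exactly as in the proof of Lemma~\ref{lem:alpha_case_4.1}: run through the seven torsors $\cT_1,\dots,\cT_7$ of Table~\ref{table:T-torsors} and apply the necessary conditions of Lemmas~\ref{lem:T1}--\ref{lem:T67} with $p\equiv q\equiv 5\pmod 8$. By Remark~\ref{rem:alpha_case}, any element of $\alpha(E_{-2pq}(\bbQ))$ other than $1,-2pq$ comes from a solvable torsor, so it is enough to show that only $\cT_1$ survives.

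First, $\cT_1$. Here $p\equiv q\equiv 1\pmod 4$ and $pq\equiv 25\equiv 1\pmod 8$, so the conditions of Lemma~\ref{lem:T1} can hold. Hence $-1$ and its complement $2pq$ may lie in $\alpha(E_{-2pq}(\bbQ))$.

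Next, $\cT_2$ and $\cT_3$. Since $p\equiv 5\pmod 8$, both $\legendre{2}{p}=-1$ and $\legendre{-2}{p}=-1$. So Lemmas~\ref{lem:T2} and \ref{lem:T3} rule out both torsors, and $\pm2$, $\pm pq$ are excluded.

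The remaining torsors $\cT_4,\dots,\cT_7$ are handled by the mod-$16$ conditions alone, so the Legendre symbols never enter. This is why the conclusion is the same for both values of $\legendre{p}{q}$.
\begin{itemize}
\item Since $p\equiv 5\pmod 8$, we have $p\equiv 5$ or $13\pmod{16}$, and in either case $2p\equiv 10\pmod{16}$. Likewise $2q\equiv 10\pmod{16}$.
\item For $\cT_4$: $q\not\equiv 1\pmod 8$, and $q-2p\equiv q-10\equiv 11$ or $3\pmod{16}$, which is never $1$ or $9$.
\item For $\cT_5$: $q\not\equiv 7\pmod 8$, and $2p-q\equiv 10-q\equiv 5$ or $13\pmod{16}$, again never $1$ or $9$.
\item Hence Lemma~\ref{lem:T45} excludes $q,-2p,-q,2p$.
\item Exchanging the roles of $p$ and $q$, Lemma~\ref{lem:T67} excludes $\cT_6$ and $\cT_7$ in the same way, removing $p,-2q,-p,2q$.
\end{itemize}

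Combining these, $\alpha(E_{-2pq}(\bbQ))\subseteq\{1,-1,2pq,-2pq\}$. The only step needing care is the mod-$16$ bookkeeping for $\cT_4$--$\cT_7$. There one must check both lifts $5$ and $13$ of the residue class modulo $16$, using that $2p$ is determined modulo $16$ by $p \bmod 8$. No real obstacle is expected.
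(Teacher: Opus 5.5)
Your proof is correct and takes the same route as the paper: you run $\cT_1,\dots,\cT_7$ through Lemmas~\ref{lem:T1}--\ref{lem:T67} and find that only $\cT_1$ survives, which contributes the classes $-1$ and $2pq$. The only difference is that you write out the mod-$16$ check for $\cT_4,\dots,\cT_7$ (using $2p\equiv 2q\equiv 10 \pmod{16}$), which the paper leaves implicit when it cites Lemmas~\ref{lem:T45} and \ref{lem:T67}.
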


\begin{proof}
Since $p\equiv q\equiv 5\pmod 8$, Lemma~\ref{lem:T1} shows that $\cT_{1}$ may have integer solutions. Hence $-1$ and $2pq$ may be elements of $\alpha\left(E_{-2pq}(\bbQ)\right)$. The remaining torsors $\cT_{2},\ldots,\cT_{7}$ have no integer solutions by Lemmas~\ref{lem:T2}, \ref{lem:T3}, \ref{lem:T45} and \ref{lem:T67}. Therefore, we get the desired result.
\end{proof}
\subsection{}

Now we determine $\left| \overline{\alpha} \left( \overline{E_{-2pq}}(\bbQ) \right) \right|$. As $\overline{E_{-2pq}}: y^{2}= x^{3} + 8pqx$, we know that $1, 2pq \in \overline{\alpha} \left( \overline{E_{-2pq}}(\bbQ) \right)$. Let us note the set of all possible divisors, $b_{1}$, of $8pq$ with $b_{1} \neq 1, 2pq$ modulo ${\bbQ^{*}}^{2}$ is

\[
\overline{T}=\{2,p,q,2p,2q,pq\}.
\]

We exclude all negative values of $b_{1}$ from the set $\overline{T}$
as the corresponding torsors $N^{2}= b_{1} M^{4} + b_{2} e^{4}$ will have no solutions
if both $b_{1}$ and $b_{2}$ are negative.

Since $\overline{\alpha}\left(\overline{E_{-2pq}}(\bbQ)\right)$ is a subgroup of
$\bbQ^{*}/{\bbQ^{*}}^{2}$ containing $2pq$, multiplication by $2pq$ gives the
complementary pairs
\[
\{2,pq\},\qquad \{p,2q\},\qquad \{q,2p\}.
\]
Thus, it is enough to study one representative from each pair. We choose the
representatives $p$, $q$, and $pq$, corresponding respectively to
$\overline{\cT_2}$, $\overline{\cT_3}$, and $\overline{\cT_6}$.


\begin{table}[h]
\centering
\[
\begin{array}{c@{\qquad}c@{\qquad}c@{\qquad}l}
\text{Label} & b_1 & b_2 & \text{Equation} \\
\hline
\noalign{\vskip 3pt}
\overline{\cT_1} & 2  & 4pq & N^2=2M^4+4pqe^4 \\
\overline{\cT_2} & p  & 8q  & N^2=pM^4+8qe^4 \\
\overline{\cT_3} & q  & 8p  & N^2=qM^4+8pe^4 \\
\overline{\cT_4} & 2p & 4q  & N^2=2pM^4+4qe^4 \\
\overline{\cT_5} & 2q & 4p  & N^2=2qM^4+4pe^4 \\
\overline{\cT_6} & pq & 8   & N^2=pqM^4+8e^4 \\
\hline
\end{array}
\]
\caption{Torsors for $\overline{E_{-2pq}}$.}
\label{table:oT-torsors}
\end{table}

\begin{lem}
\label{lem:oT1}
The classes represented by $\overline{\cT_1}$ and $\overline{\cT_6}$ occur
simultaneously in $\overline{\alpha}\left(\overline{E_{-2pq}}(\bbQ)\right)$.
Consequently, it is enough to study $\overline{\cT_6}$.
\end{lem}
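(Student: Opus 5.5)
The plan is to use only the group structure of $\overline{\alpha}\left(\overline{E_{-2pq}}(\bbQ)\right)$ together with the fact that $8pq$ and $2pq$ represent the same square class. By definition, $\overline{b}=a^2-4b=8pq$ lies in the image, because it is $\overline{\alpha}((0,0))$. Modulo ${\bbQ^*}^2$ we have $8pq\equiv 2pq$. Since $\overline{\alpha}$ is a homomorphism, its image is a subgroup of $\bbQ^*/{\bbQ^*}^2$. So if a class $d$ lies in the image, then so does $d\cdot 2pq$.

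Step one is to apply this to $d=2$, the class attached to $\overline{\cT_1}$. We get $2\cdot 2pq=4pq\equiv pq$, which is exactly the class $b_1=pq$ attached to $\overline{\cT_6}$. Conversely, $pq\cdot 2pq=2p^2q^2\equiv 2$. Hence $2$ lies in the image if and only if $pq$ does.

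It remains to connect membership of a class in the image with solvability of the corresponding torsor, as described in the preliminaries. For $b_1\neq 1,\overline{b}$ modulo squares, the class $b_1$ belongs to $\overline{\alpha}\left(\overline{E_{-2pq}}(\bbQ)\right)$ exactly when $N^2=b_1M^4+b_2e^4$ has a solution satisfying \eqref{eq:gcd}, with $b_1b_2=8pq$. Neither $2$ nor $pq$ is congruent to $1$ or $2pq$ modulo squares, because $p,q$ are distinct odd primes. So $\overline{\cT_1}$ is solvable if and only if $\overline{\cT_6}$ is. This proves the claim that the two classes occur simultaneously.

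The consequence follows at once: deciding $\overline{\cT_6}$ decides $\overline{\cT_1}$, so only $\overline{\cT_6}$ needs to be studied. As a sanity check, I would also give the explicit correspondence. A solution $(N,M,e)$ of $\overline{\cT_1}$ gives a point with $x=2M^2/e^2$ on $\overline{E_{-2pq}}$. Adding $(0,0)$ sends $x\mapsto 8pq/x=4pq e^2/M^2$, whose square class is $pq$, and this point yields a solution of $\overline{\cT_6}$ after clearing denominators.

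The only real point of care is the normalization of the gcd conditions, in particular the factor $4$ versus $8$ in $b_2$. A direct algebraic transformation between the two quartics could fail to preserve \eqref{eq:gcd}. For that reason I would rely on the group-theoretic argument rather than an explicit substitution.
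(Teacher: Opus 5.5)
Your argument is correct and is essentially the paper's proof. The paper likewise uses only that the class $2pq$ (the class of $\overline{b}=8pq$) lies in the subgroup $\overline{\alpha}\left(\overline{E_{-2pq}}(\bbQ)\right)$, so $2$ lies in it if and only if $2\cdot 2pq\equiv pq$ does. Your extra steps are the explicit link between class membership and torsor solvability, and the check that translation by $(0,0)$ sends $x\mapsto 8pq/x$; both only spell out what the paper leaves implicit.
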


\begin{proof}
Since $2pq$ belongs to
$\overline{\alpha}\left(\overline{E_{-2pq}}(\bbQ)\right)$, we have
\[
2\in\overline{\alpha}\left(\overline{E_{-2pq}}(\bbQ)\right)
\iff
2(2pq)\equiv pq\pmod{{\bbQ^{*}}^{2}}
\in\overline{\alpha}\left(\overline{E_{-2pq}}(\bbQ)\right).
\]
The classes $2$ and $pq$ correspond to $\overline{\cT_1}$ and
$\overline{\cT_6}$, respectively.
\end{proof}

\begin{lem}
    \label{lem:oT2}
There exist integer solutions for the torsor
$\overline{\cT_{2}}: N^{2}=pM^{4}+8qe^{4}$ only if
$\left(\dfrac{p}{q}\right)=1$, $\left(\dfrac{2q}{p}\right)=1$, and
$p\equiv 1\pmod 8$.
\end{lem}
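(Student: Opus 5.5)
The plan is to follow the pattern of Lemmas~\ref{lem:T1}--\ref{lem:T45}: reduce $\overline{\cT_{2}}: N^{2}=pM^{4}+8qe^{4}$ modulo $q$, modulo $p$, and modulo $8$, using the coprimality conditions \eqref{eq:gcd} with $b_{1}=p$ and $b_{2}=8q$. Each reduction should yield exactly one of the three stated conditions.

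\emph{Reduction modulo $q$.} Since $\gcd(b_{2},M)=\gcd(8q,M)=1$, we have $q\nmid M$. Reducing gives $N^{2}\equiv pM^{4}\pmod q$. We may divide by the unit $M^{4}$, which is a square mod $q$. The element $p$ is nonzero mod $q$ because $p\neq q$. So $p$ is a nonzero square modulo $q$, that is, $\left(\dfrac{p}{q}\right)=1$. (Consistently, $q\nmid N$, since otherwise $q\mid pM^{4}$.)

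\emph{Reduction modulo $p$.} Since $\gcd(b_{1},e)=\gcd(p,e)=1$, we have $p\nmid e$. Reducing gives $N^{2}\equiv 8qe^{4}\pmod p$. Here $8qe^{4}\not\equiv 0\pmod p$, so $8q=4\cdot 2q$ is a nonzero square mod $p$. Removing the square factor $4$ gives $\left(\dfrac{2q}{p}\right)=1$.

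\emph{Reduction modulo $8$.} From $\gcd(8q,M)=1$, the integer $M$ is odd, so $M^{4}\equiv 1\pmod{16}$, in particular $\pmod 8$. The right-hand side is $pM^{4}+8qe^{4}\equiv p\pmod 8$, which is odd. Hence $N$ is odd and $N^{2}\equiv 1\pmod 8$. Comparing the two sides yields $p\equiv 1\pmod 8$.

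There is no real obstacle here. The only point needing care is choosing the correct gcd condition from \eqref{eq:gcd} to justify that $q\nmid M$, $p\nmid e$, and $M$ is odd; this is what makes the residues nonzero. Notice that, unlike in Lemma~\ref{lem:T45}, the parity of $e$ does not matter, because the term $8qe^{4}$ already vanishes modulo $8$. So no case split on $e$ is needed, and working modulo $8$ rather than $16$ suffices.
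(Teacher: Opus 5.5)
Your proposal is correct and matches the paper's proof: reduce $\overline{\cT_2}$ modulo $q$ and modulo $p$ to get the two Legendre-symbol conditions, then use that $M$ is odd, hence $N$ is odd, so that reducing modulo $8$ gives $p\equiv 1\pmod 8$. The only difference is that you state explicitly which conditions in \eqref{eq:gcd} give $q\nmid M$ and $p\nmid e$, so that the residues are nonzero; the paper leaves this implicit.
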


\begin{proof}
Reducing $\overline{\cT_2}$ modulo $q$ and $p$, respectively, gives
$
\left(\dfrac{p}{q}\right)=1$ and $\left(\dfrac{2q}{p}\right)=1.$
By \eqref{eq:gcd}, $M$ is odd. Hence $N$ is also odd, and reducing the torsor
modulo $8$ gives
\[
1\equiv N^{2}\equiv pM^{4}\equiv p\pmod 8.
\]\end{proof}

\begin{lem}
    \label{lem:oT3}
There exist integer solutions for the torsor
$\overline{\cT_3}: N^2=qM^4+8pe^4$ only if
$\left(\dfrac{q}{p}\right)=1$, $\left(\dfrac{2p}{q}\right)=1$, and
$q\equiv1\pmod 8$.
\end{lem}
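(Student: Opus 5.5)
The proof will mirror that of Lemma~\ref{lem:oT2}, with the roles of $p$ and $q$ interchanged. In fact, swapping $p\leftrightarrow q$ sends $\overline{\cT_2}: N^2=pM^4+8qe^4$ to $\overline{\cT_3}: N^2=qM^4+8pe^4$, so the statement follows formally from Lemma~\ref{lem:oT2}. For completeness, I will still run the three reductions directly.

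The first step is to reduce $\overline{\cT_3}$ modulo $p$. This gives $N^2\equiv qM^4\pmod p$. By the gcd conditions \eqref{eq:gcd}, $\gcd(b_2,M)=\gcd(8p,M)=1$, so $p\nmid M$ and $M^4$ is invertible modulo $p$. Also $N\not\equiv 0\pmod p$, since otherwise $p\mid qM^4$, which is impossible. Hence $\left(\frac{q}{p}\right)=1$.

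The second step is to reduce modulo $q$. This gives $N^2\equiv 8pe^4\pmod q$. Since $\gcd(b_1,e)=\gcd(q,e)=1$, we have $q\nmid e$. As before $N\not\equiv0\pmod q$, so $8p$ is a nonzero square modulo $q$. Because $8=2\cdot 2^2$, this is equivalent to $\left(\frac{2p}{q}\right)=1$.

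The third step is the $2$-adic condition. From $\gcd(8p,M)=1$ we get that $M$ is odd, so $M^4\equiv1\pmod{16}$. Reducing modulo $2$ gives $N^2\equiv qM^4\equiv 1$, so $N$ is odd and $N^2\equiv1\pmod 8$. Reducing the torsor modulo $8$ then yields $1\equiv q\cdot 1+8pe^4\equiv q\pmod 8$.

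No step is a genuine obstacle. The only point needing care is justifying that $N$ is coprime to $p$ and $q$, so that the Legendre symbols are honestly $+1$ rather than merely $\neq -1$. This follows, as above, from the coprimality conditions together with the fact that $p,q\nmid 8$.
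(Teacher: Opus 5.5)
Your proposal is correct and takes the same route as the paper, which obtains this lemma from Lemma~\ref{lem:oT2} by interchanging $p$ and $q$; your direct reductions modulo $p$, $q$ and $8$ are exactly the proof of Lemma~\ref{lem:oT2} with the roles of $p$ and $q$ swapped. Your extra check that $N$ is prime to $p$ and $q$, so that the Legendre symbols equal $1$ rather than $0$, is a small point the paper leaves implicit.
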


\begin{proof}
This follows from Lemma~\ref{lem:oT2} by interchanging $p$ and $q$.
\end{proof}

\begin{lem}
 \label{lem:oT4}
The classes represented by $\overline{\cT_4}$ and $\overline{\cT_3}$ occur
simultaneously, and the classes represented by $\overline{\cT_5}$ and
$\overline{\cT_2}$ occur simultaneously, in
$\overline{\alpha}\left(\overline{E_{-2pq}}(\bbQ)\right)$.
Consequently, no separate analysis of $\overline{\cT_4}$ and
$\overline{\cT_5}$ is required.
\end{lem}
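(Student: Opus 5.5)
The argument mirrors the proof of Lemma~\ref{lem:oT1}: it uses only the group structure of $\overline{\alpha}\left(\overline{E_{-2pq}}(\bbQ)\right)$ inside $\bbQ^{*}/{\bbQ^{*}}^{2}$, and requires no analysis of the torsors themselves. The key input is the fact recorded above that $1,2pq\in\overline{\alpha}\left(\overline{E_{-2pq}}(\bbQ)\right)$, the class $2pq$ being the image of the $2$-torsion point $(0,0)$. Since $8pq\equiv 2pq\pmod{{\bbQ^{*}}^{2}}$, this is exactly the class $\overline{b}$ in the definition of $\overline{\alpha}$.

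First, multiplying by $2pq$ is a bijection of the subgroup onto itself. Hence for any square class $d$,
\[
d\in\overline{\alpha}\left(\overline{E_{-2pq}}(\bbQ)\right)\iff 2pq\,d\in\overline{\alpha}\left(\overline{E_{-2pq}}(\bbQ)\right).
\]
Second, I apply this with $d=q$. We get $2pq\cdot q=2pq^{2}\equiv 2p\pmod{{\bbQ^{*}}^{2}}$. From Table~\ref{table:oT-torsors}, $b_1=q$ is the torsor $\overline{\cT_3}$ and $b_1=2p$ is $\overline{\cT_4}$, so these two classes occur simultaneously. Third, I apply it with $d=p$. We get $2pq\cdot p\equiv 2q$, and the torsors are $\overline{\cT_2}$ (with $b_1=p$) and $\overline{\cT_5}$ (with $b_1=2q$). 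So these two classes also occur simultaneously.

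As a sanity check, or as a direct alternative, I would note that the substitution interchanging $M$ and $e$ converts $\overline{\cT_4}:N^2=2pM^4+4qe^4$ into a torsor with the coefficients $(4q,2p)$ swapped. Up to squares this is the complementary factorization $b_1b_2=8pq$ with $b_1\equiv q$. This is consistent with the pairing $\{q,2p\}$ already noted in the text, and the same holds for $\{p,2q\}$.

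There is no real obstacle here. The only point needing care is to check that each product lands in the correct square class modulo ${\bbQ^{*}}^{2}$, for example that $8q\equiv 2q$ and $4p\equiv p$. One must also match each class to the right torsor label, since the labels for $\overline{\cT_4}$ and $\overline{\cT_5}$ list $b_1=2p$ and $2q$ respectively. The consequence is then immediate: once $\overline{\cT_2}$ and $\overline{\cT_3}$ have been analysed in Lemmas~\ref{lem:oT2} and \ref{lem:oT3}, no separate analysis of $\overline{\cT_4}$ and $\overline{\cT_5}$ is needed.
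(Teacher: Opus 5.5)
Your proposal is correct and is essentially the paper's proof. Both arguments use that $\overline{\alpha}\left(\overline{E_{-2pq}}(\bbQ)\right)$ is a subgroup of $\bbQ^{*}/{\bbQ^{*}}^{2}$ containing $2pq$, and then check that multiplication by $2pq$ pairs $q$ with $2p$ and $p$ with $2q$. The paper computes $(2p)(2pq)\equiv q$ and $(2q)(2pq)\equiv p$, which is the same computation read in the other direction.
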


\begin{proof}
Multiplication by $2pq$ gives
\[
(2p)(2pq)\equiv q
\qquad\text{and}\qquad
(2q)(2pq)\equiv p
\pmod{{\bbQ^{*}}^{2}}.
\]
Thus the pair $\{2p,q\}$ corresponds to
$\{\overline{\cT_4},\overline{\cT_3}\}$, while the pair $\{2q,p\}$
corresponds to $\{\overline{\cT_5},\overline{\cT_2}\}$.
\end{proof}

\begin{lem}
    \label{lem:oT6}
There exist integer solutions for the torsor $\overline{\cT_{6}}: N^{2}=pqM^{4}+8e^{4}$ only if $\left( \dfrac{2}{p} \right)=1$, $\left(\dfrac{2}{q} \right) =1$, and $ pq \equiv 1 \pmod 8$.  
\end{lem}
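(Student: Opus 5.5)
Reduce $\overline{\cT_6}: N^{2}=pqM^{4}+8e^{4}$ modulo $p$ and $q$ to get the Legendre-symbol conditions, then use a $2$-adic argument for the congruence, following the pattern of Lemmas~\ref{lem:oT2} and \ref{lem:T1}.

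First, reduce modulo $p$. By \eqref{eq:gcd} we have $\gcd(b_1,e)=\gcd(pq,e)=1$, so $p\nmid e$. The torsor gives $N^{2}\equiv 8e^{4}\pmod p$, hence $\legendre{8}{p}=\legendre{2}{p}=1$. The same reduction modulo $q$ gives $\legendre{2}{q}=1$. Together these force $p,q\equiv \pm1\pmod 8$, so $pq\equiv \pm1\pmod 8$.

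Next, I would pin down the parity of $M$ to get the sharper congruence. The condition $\gcd(b_2,M)=\gcd(8,M)=1$ in \eqref{eq:gcd} forces $M$ odd, so $pqM^{4}$ is odd. Then $N$ is odd as well, since $8e^{4}$ is even. Reducing modulo $8$ gives $1\equiv N^{2}\equiv pqM^{4}+8e^{4}\equiv pq\pmod 8$, because $M^{4}\equiv 1\pmod 8$ and $8e^{4}\equiv 0\pmod 8$. This rules out $pq\equiv 7\pmod 8$ and gives $pq\equiv1\pmod 8$.

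The only delicate point is justifying that $M$ is odd. If the gcd normalisation were read as allowing $2\mid M$, write $M=2M'$. Then $N^{2}=16pqM'^{4}+8e^{4}$ with $e$ odd, so $\ord_{2}(N^{2})=3$, which is impossible for a square. So $M$ is odd either way, and the modulo $8$ reduction goes through unconditionally.
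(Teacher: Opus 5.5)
Your proposal is correct and follows essentially the same route as the paper's proof. Both reduce modulo $p$ and $q$ to get $\left(\frac{2}{p}\right)=\left(\frac{2}{q}\right)=1$, use the normalisation $\gcd(8,M)=1$ to make $M$ and hence $N$ odd, and then reduce modulo $8$ to obtain $pq\equiv 1 \pmod 8$. Your extra $2$-adic valuation check, showing $M$ cannot be even in any case, is a harmless and slightly more robust addition that the paper does not include.
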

\begin{proof}
Reducing $\overline{\cT_6}$ modulo $p$ and $q$, respectively, gives $
\left(\dfrac{2}{p}\right)=1$ and 
$
\left(\dfrac{2}{q}\right)=1.
$
By \eqref{eq:gcd}, $M$ is odd, and hence $N$ is odd. Reducing the torsor modulo
$8$, we obtain
\[
1\equiv N^{2}\equiv pqM^{4}\equiv pq\pmod 8.
\]
\end{proof}
\begin{rem}
    \label{rem:3.2} 
By Lemmas~\ref{lem:oT1} and \ref{lem:oT4}, the six possible classes occur in three complementary pairs. So, for elements of $\overline{\alpha} \left( \overline{E_{-2pq}}(\bbQ) \right)$, it is enough to check the solution of the torsor $\overline{\cT_{2}} $, $\overline{\cT_{3}}$ and $\overline{\cT_{6}}$. 
\end{rem}

So, using Lemmas \ref{lem:oT2}, \ref{lem:oT3}, and \ref{lem:oT6}, we obtain the following results.
\subsection*{Case-I:} At first, we shall consider the case when $\legendre{2}{p}=1=\legendre{2}{q}=1$. It gives $(p,q)\equiv (1,1),(1,7),(7,1),(7,7)\pmod 8$.

\begin{lem}
\label{lem:oalpha_1.1} 
If $(p,q)\equiv (1,1)\pmod 8$, then
\[
\overline{\alpha}\left(\overline{E_{-2pq}}(\bbQ)\right)\subseteq
\begin{cases}
\{1,2,p,q,2p,2q,pq,2pq\},
& \text{for } \legendre{p}{q}=1,\\
\{1,2,pq,2pq\},
& \text{for } \legendre{p}{q}=-1.
\end{cases}
\]
\end{lem}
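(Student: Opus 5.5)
The plan is to follow Remark~\ref{rem:3.2}. By Lemmas~\ref{lem:oT1} and \ref{lem:oT4}, the group $\overline{\alpha}\left(\overline{E_{-2pq}}(\bbQ)\right)$ always contains $1$ and $2pq$. Every other class lies in one of the complementary pairs $\{2,pq\}$, $\{p,2q\}$, $\{q,2p\}$, and each pair is governed by the solvability of $\overline{\cT_6}$, $\overline{\cT_2}$, $\overline{\cT_3}$ respectively. Negative divisors are already excluded. So it suffices, for $(p,q)\equiv(1,1)\pmod 8$, to decide which of the necessary conditions in Lemmas~\ref{lem:oT2}, \ref{lem:oT3}, \ref{lem:oT6} can hold, splitting according to $\legendre{p}{q}$.

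First consider $\overline{\cT_6}$. Since $p\equiv q\equiv 1\pmod 8$, we have $\legendre{2}{p}=\legendre{2}{q}=1$ and $pq\equiv 1\pmod 8$. Thus the conditions of Lemma~\ref{lem:oT6} are satisfied and $\overline{\cT_6}$ is not excluded, so $pq$ and hence $2$ may lie in the image in both subcases.

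Next consider $\overline{\cT_2}$ and $\overline{\cT_3}$. The congruence conditions $p\equiv 1$ and $q\equiv 1\pmod 8$ hold automatically. Because $p\equiv 1\pmod 4$, quadratic reciprocity gives $\legendre{q}{p}=\legendre{p}{q}$. Because $\legendre{2}{p}=\legendre{2}{q}=1$, we get $\legendre{2q}{p}=\legendre{q}{p}$ and $\legendre{2p}{q}=\legendre{p}{q}$.
\begin{itemize}
\item If $\legendre{p}{q}=1$, all Legendre conditions in Lemmas~\ref{lem:oT2} and \ref{lem:oT3} hold. Then $p,2q,q,2p$ may all lie in the image, which gives the full set $\{1,2,p,q,2p,2q,pq,2pq\}$.
\item If $\legendre{p}{q}=-1$, the condition $\legendre{p}{q}=1$ of Lemma~\ref{lem:oT2} fails, and likewise $\legendre{q}{p}=-1$ kills Lemma~\ref{lem:oT3}. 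So $\overline{\cT_2}$ and $\overline{\cT_3}$ have no solutions. Consequently $p,2q,q,2p$ are excluded and the image lies in $\{1,2,pq,2pq\}$.
\end{itemize}

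The argument is a routine bookkeeping check. The only point needing care is the appeal to reciprocity and to $\legendre{2}{\cdot}=1$, which converts the conditions $\legendre{2q}{p}$ and $\legendre{2p}{q}$ into the single symbol $\legendre{p}{q}$. I do not expect any real obstacle. The proof is therefore a short case listing, in the same style as the proof of Lemma~\ref{lem:alpha_case_1.1}.
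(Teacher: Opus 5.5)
Your proposal is correct and follows the paper's proof: you reduce via the complementary pairs to $\overline{\cT_2}$, $\overline{\cT_3}$, $\overline{\cT_6}$, observe that the conditions of Lemma~\ref{lem:oT6} hold, and use reciprocity together with $\legendre{2}{p}=\legendre{2}{q}=1$ to collapse all Legendre conditions of Lemmas~\ref{lem:oT2} and~\ref{lem:oT3} to $\legendre{p}{q}=1$. One small correction: $1$ and $2pq$ lie in the image by the definition of $\overline{\alpha}$ (they are the images of $\overline{\cO}$ and $(0,0)$), whereas Lemmas~\ref{lem:oT1} and~\ref{lem:oT4} only supply the pairing of the remaining classes.
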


\begin{proof}
Since $p\equiv q\equiv1\pmod8$, the conditions in Lemmas~\ref{lem:oT2}
and~\ref{lem:oT3} are equivalent to $\legendre{p}{q}=1$, by quadratic
reciprocity. Moreover, all the conditions in Lemma~\ref{lem:oT6} are satisfied.
Thus, for $\legendre{p}{q}=1$, the three complementary pairs may occur, whereas
for $\legendre{p}{q}=-1$, only the pair $\{2,pq\}$ may occur. This gives the
stated inclusions.
\end{proof}

\begin{lem}
\label{lem:oalpha_1.2} 
If $(p,q)\equiv (1,7)\pmod 8$, then
\[
\overline{\alpha}\left(\overline{E_{-2pq}}(\bbQ)\right)
\begin{aligned}
    &\subseteq \{1,p,2q,2pq\},
&& \text{for } \legendre{p}{q}=1,\\
&= \{1,2pq\},
&& \text{for } \legendre{p}{q}=-1.
\end{aligned}
\]
\end{lem}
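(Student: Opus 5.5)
Following Remark~\ref{rem:3.2}, I will test the three representative torsors $\overline{\cT_2}$, $\overline{\cT_3}$ and $\overline{\cT_6}$ against the necessary conditions of Lemmas~\ref{lem:oT2}, \ref{lem:oT3} and~\ref{lem:oT6}. The classes $1$ and $2pq$ always lie in $\overline{\alpha}\left(\overline{E_{-2pq}}(\bbQ)\right)$. Each surviving representative contributes itself together with its complement: $p$ with $2q$, $q$ with $2p$, and $pq$ with $2$.

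First I dispose of $\overline{\cT_3}$ and $\overline{\cT_6}$, which fail regardless of the Legendre symbol. Lemma~\ref{lem:oT3} requires $q\equiv 1\pmod 8$, but $q\equiv 7\pmod 8$. Lemma~\ref{lem:oT6} requires $pq\equiv 1\pmod 8$, but $pq\equiv 7\pmod 8$. So the classes $q,2p,pq,2$ are excluded in both subcases.

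Next comes $\overline{\cT_2}$, where Lemma~\ref{lem:oT2} needs $p\equiv 1\pmod 8$, $\legendre{p}{q}=1$ and $\legendre{2q}{p}=1$. The first holds automatically. For the third, note $\legendre{2}{p}=1$ since $p\equiv 1\pmod 8$. Quadratic reciprocity with $p\equiv 1\pmod 4$ gives $\legendre{q}{p}=\legendre{p}{q}$. Hence $\legendre{2q}{p}=\legendre{p}{q}$, and all the conditions reduce to $\legendre{p}{q}=1$.

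It follows that when $\legendre{p}{q}=1$ only the pair $\{p,2q\}$ can be added, giving $\overline{\alpha}\subseteq\{1,p,2q,2pq\}$. When $\legendre{p}{q}=-1$ no torsor survives, so $\overline{\alpha}$ is contained in $\{1,2pq\}$. Since it always contains $1$ and $2pq$, this gives equality. Everything here is routine; the only point needing care is the reciprocity step, so that the $\legendre{2q}{p}$ condition is correctly identified with $\legendre{p}{q}$.
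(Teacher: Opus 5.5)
Your proposal is correct and follows the paper's argument: you rule out $\overline{\cT_3}$ and $\overline{\cT_6}$ because $q\not\equiv 1\pmod 8$ and $pq\not\equiv 1\pmod 8$, and you reduce the conditions of Lemma~\ref{lem:oT2} to $\legendre{p}{q}=1$. Your explicit computation $\legendre{2q}{p}=\legendre{q}{p}=\legendre{p}{q}$ fills in the reciprocity step that the paper states without detail.
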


\begin{proof}
Here Lemma~\ref{lem:oT2} may contribute the pair $\{p,2q\}$ precisely when
$\legendre{p}{q}=1$. Lemma~\ref{lem:oT3} does not apply because
$q\not\equiv1\pmod8$, and Lemma~\ref{lem:oT6} does not apply because
$pq\not\equiv1\pmod8$. Hence the result follows.
\end{proof}

\begin{lem}
\label{lem:oalpha_1.3} 
If $(p,q)\equiv (7,1)\pmod 8$, then
\[
\overline{\alpha}\left(\overline{E_{-2pq}}(\bbQ)\right)
\begin{aligned}
&\subseteq \{1,q,2p,2pq\},
&& \text{for } \legendre{p}{q}=1,\\
&= \{1,2pq\},
&& \text{for } \legendre{p}{q}=-1.
\end{aligned}
\]
\end{lem}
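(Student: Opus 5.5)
The plan is to copy the proof of Lemma~\ref{lem:oalpha_1.2}, or simply to derive the statement from it by interchanging $p$ and $q$. By Remark~\ref{rem:3.2}, it suffices to decide which of the three complementary pairs $\{p,2q\}$, $\{q,2p\}$, $\{2,pq\}$ can lie in $\overline{\alpha}\left(\overline{E_{-2pq}}(\bbQ)\right)$. These pairs are governed by $\overline{\cT_2}$, $\overline{\cT_3}$ and $\overline{\cT_6}$, respectively. The classes $1$ and $2pq$ are always present.

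First, consider $\overline{\cT_2}$. Lemma~\ref{lem:oT2} requires $p\equiv 1\pmod 8$. Since $p\equiv 7\pmod 8$, this torsor has no solutions, so the pair $\{p,2q\}$ is excluded.

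Next, consider $\overline{\cT_6}$. Lemma~\ref{lem:oT6} requires $pq\equiv 1\pmod 8$. Here $pq\equiv 7\pmod 8$, so the pair $\{2,pq\}$ is excluded as well.

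Finally, consider $\overline{\cT_3}$. Lemma~\ref{lem:oT3} requires three things: $q\equiv 1\pmod 8$, which holds; $\legendre{q}{p}=1$; and $\legendre{2p}{q}=1$. Because $q\equiv 1\pmod 8$, quadratic reciprocity gives $\legendre{q}{p}=\legendre{p}{q}$. Also $\legendre{2}{q}=1$, so $\legendre{2p}{q}=\legendre{p}{q}$. Hence all the conditions reduce to $\legendre{p}{q}=1$.

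It follows that when $\legendre{p}{q}=1$ the only possible extra pair is $\{q,2p\}$, which gives the inclusion in $\{1,q,2p,2pq\}$. When $\legendre{p}{q}=-1$ no torsor contributes. Since $1$ and $2pq$ always belong to the image, the group is exactly $\{1,2pq\}$, which justifies the equality sign in the second case.

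The only step needing care is the reciprocity bookkeeping, namely checking that the two Legendre conditions in Lemma~\ref{lem:oT3} collapse to the single condition $\legendre{p}{q}=1$. This works because $q\equiv 1\pmod 8$, so no sign change occurs. The rest is reading off congruence conditions.
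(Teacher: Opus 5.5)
Your proof is correct and is essentially the paper's argument. The paper simply says the result follows from Lemma~\ref{lem:oalpha_1.2} by interchanging $p$ and $q$; you carry out that swapped torsor check explicitly, including the reciprocity step (using $q\equiv 1\pmod 8$, so $\legendre{q}{p}=\legendre{p}{q}$ and $\legendre{2p}{q}=\legendre{p}{q}$) that makes the interchange legitimate.
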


\begin{proof}
This follows from Lemma~\ref{lem:oalpha_1.2} by interchanging $p$ and $q$.
\end{proof}

\begin{lem}
\label{lem:oalpha_1.4} 
If $(p,q)\equiv (7,7)\pmod 8$, then
\[
\overline{\alpha}\left(\overline{E_{-2pq}}(\bbQ)\right)
\subseteq \{1,2,pq,2pq\}
\]
for both $\legendre{p}{q}=1$ and $\legendre{p}{q}=-1$.
\end{lem}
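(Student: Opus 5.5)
We need to prove Lemma \ref{lem:oalpha_1.4}: for $(p,q)\equiv(7,7)\pmod 8$ we have $\overline{\alpha}\left(\overline{E_{-2pq}}(\bbQ)\right)\subseteq\{1,2,pq,2pq\}$. The plan follows the proof of Lemma~\ref{lem:oalpha_1.2}. By Remark~\ref{rem:3.2}, the candidate classes fall into the three complementary pairs $\{2,pq\}$, $\{p,2q\}$ and $\{q,2p\}$. These are governed by $\overline{\cT_6}$ (equivalently $\overline{\cT_1}$ by Lemma~\ref{lem:oT1}), by $\overline{\cT_2}$ (with $\overline{\cT_5}$), and by $\overline{\cT_3}$ (with $\overline{\cT_4}$), the last two pairings coming from Lemma~\ref{lem:oT4}. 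Negative classes were already excluded, since for them both coefficients of the torsor $N^2=b_1M^4+b_2e^4$ are negative. It therefore suffices to show that $\overline{\cT_2}$ and $\overline{\cT_3}$ have no solutions satisfying \eqref{eq:gcd}, while the pair $\{2,pq\}$ is allowed to remain.

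First, $\overline{\cT_2}$. Lemma~\ref{lem:oT2} says a solution can exist only if $p\equiv 1\pmod 8$. Since $p\equiv 7\pmod 8$, there is no solution, whatever the value of $\legendre{p}{q}$. So neither $p$ nor $2q$ lies in $\overline{\alpha}\left(\overline{E_{-2pq}}(\bbQ)\right)$.

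Second, $\overline{\cT_3}$. By Lemma~\ref{lem:oT3}, a solution requires $q\equiv 1\pmod 8$. Since $q\equiv 7\pmod 8$, this fails, so $q$ and $2p$ are excluded. This argument also does not use the Legendre symbol.

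Finally, $\overline{\cT_6}$. Here $\legendre{2}{p}=\legendre{2}{q}=1$ and $pq\equiv 49\equiv 1\pmod 8$, so every necessary condition in Lemma~\ref{lem:oT6} holds. Hence the pair $\{2,pq\}$ cannot be ruled out and stays among the possible classes. Combining the three steps with the trivial classes $1$ and $2pq$ gives the stated inclusion, for both $\legendre{p}{q}=1$ and $\legendre{p}{q}=-1$.

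There is no real obstacle. The only care needed is to use the pairings from Lemmas~\ref{lem:oT1} and~\ref{lem:oT4}, so that excluding $\overline{\cT_2}$ and $\overline{\cT_3}$ also excludes $2q$ and $2p$. Everything else is the mod $8$ check above.
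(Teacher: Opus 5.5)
Your proposal is correct and is essentially the paper's own argument: $\overline{\cT_2}$ and $\overline{\cT_3}$ are ruled out by Lemmas~\ref{lem:oT2} and~\ref{lem:oT3} because $p,q\not\equiv 1\pmod 8$, while the conditions of Lemma~\ref{lem:oT6} hold, so only the pair $\{2,pq\}$ can survive, whatever the value of $\legendre{p}{q}$. Your explicit use of the pairings from Lemmas~\ref{lem:oT1} and~\ref{lem:oT4} to exclude $2p$ and $2q$ simply spells out what the paper leaves implicit in Remark~\ref{rem:3.2}.
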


\begin{proof}
Lemmas~\ref{lem:oT2} and~\ref{lem:oT3} do not apply because
$p\not\equiv1\pmod8$ and $q\not\equiv1\pmod8$, respectively. On the other hand,
all the conditions in Lemma~\ref{lem:oT6} are satisfied. Hence only the pair
$\{2,pq\}$ may occur, independently of $\legendre{p}{q}$.
\end{proof}

\subsection*{Case-II:} Now we shall consider the case when $\legendre{2}{p}=1$ and $\legendre{2}{q}=-1$. It gives $(p,q)\equiv (1,3),(1,5),(7,3),(7,5)\pmod 8$.

\begin{lem}
\label{lem:oalpha_2.1}
If $(p,q)\equiv (1,3)\pmod 8$, then
\[
\overline{\alpha}\left(\overline{E_{-2pq}}(\bbQ)\right)
\begin{aligned}
&\subseteq \{1,p,2q,2pq\},
&& \text{for } \legendre{p}{q}=1,\\
&= \{1,2pq\},
&& \text{for } \legendre{p}{q}=-1.
\end{aligned}
\]
\end{lem}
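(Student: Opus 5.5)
I will argue exactly as in Lemmas~\ref{lem:oalpha_1.2} and~\ref{lem:oalpha_1.4}. By Remark~\ref{rem:3.2}, the classes $1$ and $2pq$ always lie in $\overline{\alpha}\left(\overline{E_{-2pq}}(\bbQ)\right)$. Every other element falls into one of the three complementary pairs $\{p,2q\}$, $\{q,2p\}$, $\{2,pq\}$. By Lemmas~\ref{lem:oT1} and~\ref{lem:oT4}, each pair is detected by the torsors $\overline{\cT_2}$, $\overline{\cT_3}$ and $\overline{\cT_6}$ respectively. It therefore suffices to test the necessary conditions of Lemmas~\ref{lem:oT2}, \ref{lem:oT3} and~\ref{lem:oT6} when $(p,q)\equiv(1,3)\pmod 8$.

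First, $\overline{\cT_3}$ requires $q\equiv 1\pmod 8$, which fails since $q\equiv 3\pmod 8$. So the pair $\{q,2p\}$ never occurs. Next, $\overline{\cT_6}$ requires $\legendre{2}{q}=1$ and $pq\equiv 1\pmod 8$. Both fail, because $q\equiv 3\pmod 8$ gives $\legendre{2}{q}=-1$ and $pq\equiv 3\pmod 8$. So the pair $\{2,pq\}$ never occurs.

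It remains to treat $\overline{\cT_2}$, which requires $\legendre{p}{q}=1$, $\legendre{2q}{p}=1$ and $p\equiv 1\pmod 8$. The last condition holds automatically. Since $p\equiv 1\pmod 8$, we have $\legendre{2}{p}=1$, and quadratic reciprocity gives $\legendre{q}{p}=\legendre{p}{q}$. Hence $\legendre{2q}{p}=\legendre{p}{q}$, and all three conditions reduce to the single condition $\legendre{p}{q}=1$.

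Consequently, when $\legendre{p}{q}=1$ the only possible extra pair is $\{p,2q\}$, which gives the inclusion in $\{1,p,2q,2pq\}$. When $\legendre{p}{q}=-1$ no torsor can contribute, so the group is contained in $\{1,2pq\}$. Since $1$ and $2pq$ always lie in it, we get equality. The only step needing care is the reciprocity reduction of $\legendre{2q}{p}$ to $\legendre{p}{q}$, and that is immediate because $p\equiv 1\pmod 4$.
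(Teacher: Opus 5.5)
Your proposal is correct and takes the same approach as the paper: you rule out $\overline{\cT_3}$ because $q\not\equiv 1\pmod 8$ and $\overline{\cT_6}$ because $\legendre{2}{q}=-1$, then use $p\equiv 1\pmod 8$ and quadratic reciprocity to reduce the conditions of Lemma~\ref{lem:oT2} to $\legendre{p}{q}=1$. (A minor citation point: the fact that $1$ and $2pq$ always lie in the image is stated at the start of that subsection, not in Remark~\ref{rem:3.2}, but this does not affect the argument.)
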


\begin{proof}
Since $p\equiv1\pmod8$, Lemma~\ref{lem:oT2} may contribute the pair
$\{p,2q\}$, and its Legendre-symbol conditions are equivalent to
$\legendre{p}{q}=1$. Lemma~\ref{lem:oT3} does not apply because
$q\not\equiv1\pmod8$, while Lemma~\ref{lem:oT6} does not apply because
$\legendre{2}{q}=-1$. Hence the result follows.
\end{proof}

\begin{lem}
\label{lem:oalpha_2.2}
If $(p,q)\equiv (1,5)\pmod 8$, then
\[
\overline{\alpha}\left(\overline{E_{-2pq}}(\bbQ)\right)
\begin{aligned}
&\subseteq \{1,p,2q,2pq\},
&& \text{for } \legendre{p}{q}=1,\\
&= \{1,2pq\},
&& \text{for } \legendre{p}{q}=-1.
\end{aligned}
\]
\end{lem}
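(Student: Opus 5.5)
We handle the case $(p,q)\equiv(1,5)\pmod 8$ exactly as in Lemma~\ref{lem:oalpha_2.1}. By Remark~\ref{rem:3.2}, Lemmas~\ref{lem:oT1} and \ref{lem:oT4}, the group $\overline{\alpha}\left(\overline{E_{-2pq}}(\bbQ)\right)$ is the union of $\{1,2pq\}$ with some of the three complementary pairs $\{p,2q\}$, $\{q,2p\}$ and $\{2,pq\}$. A pair can occur only if its representative torsor $\overline{\cT_2}$, $\overline{\cT_3}$ or $\overline{\cT_6}$, respectively, has a solution satisfying \eqref{eq:gcd}. So it suffices to check the necessary conditions of Lemmas~\ref{lem:oT2}, \ref{lem:oT3} and \ref{lem:oT6} for each torsor.

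For $\overline{\cT_2}$, Lemma~\ref{lem:oT2} requires $\legendre{p}{q}=1$, $\legendre{2q}{p}=1$ and $p\equiv1\pmod 8$. The last condition holds. Since $p\equiv 1\pmod 8$, we have $\legendre{2}{p}=1$, and by quadratic reciprocity $\legendre{q}{p}=\legendre{p}{q}$. Hence $\legendre{2q}{p}=\legendre{p}{q}$, and all the conditions reduce to $\legendre{p}{q}=1$. So the pair $\{p,2q\}$ may occur when $\legendre{p}{q}=1$ and cannot occur when $\legendre{p}{q}=-1$.

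For $\overline{\cT_3}$, Lemma~\ref{lem:oT3} requires $q\equiv 1\pmod 8$, which fails because $q\equiv 5\pmod 8$. So the pair $\{q,2p\}$ never occurs. For $\overline{\cT_6}$, Lemma~\ref{lem:oT6} requires $\legendre{2}{q}=1$, which fails because $q\equiv5\pmod 8$ gives $\legendre{2}{q}=-1$. Alternatively, $pq\equiv 5\not\equiv 1\pmod 8$. So the pair $\{2,pq\}$ never occurs.

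Putting these together, for $\legendre{p}{q}=1$ we get $\overline{\alpha}\subseteq\{1,p,2q,2pq\}$. For $\legendre{p}{q}=-1$, no torsor contributes, so $\overline{\alpha}\subseteq\{1,2pq\}$. Since $1$ and $2pq$ always lie in $\overline{\alpha}$, this inclusion is an equality. The only step that needs care is the reciprocity reduction for $\overline{\cT_2}$, and it is routine because $p\equiv1\pmod 4$.
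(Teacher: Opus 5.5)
Your proposal is correct and follows the paper's argument. The paper proves this case by declaring it identical to Lemma~\ref{lem:oalpha_2.1}: the conditions for $\overline{\cT_2}$ collapse to $\legendre{p}{q}=1$ because $p\equiv1\pmod 8$ and quadratic reciprocity apply, $\overline{\cT_3}$ is excluded since $q\not\equiv1\pmod 8$, and $\overline{\cT_6}$ is excluded since $\legendre{2}{q}=-1$. Your alternative $pq\equiv5\pmod 8$ obstruction and your note that $1,2pq$ always lie in the image (which turns the inclusion into an equality) only make explicit what the paper leaves implicit.
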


\begin{proof}
The proof is identical to that of Lemma~\ref{lem:oalpha_2.1}.
\end{proof}


\begin{lem}
\label{lem:oalpha_2.3}
If $(p,q)\equiv (7,3)\pmod 8$, then
\[
\overline{\alpha}\left(\overline{E_{-2pq}}(\bbQ)\right)
= \{1,2pq\} \quad \text{ for both } \legendre{p}{q}=1 \quad \text{and } \legendre{p}{q}=-1.
\]
\end{lem}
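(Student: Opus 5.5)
The plan is to follow the pattern of Lemmas~\ref{lem:oalpha_1.2}--\ref{lem:oalpha_2.2}: use Remark~\ref{rem:3.2} to reduce to the three representative torsors $\overline{\cT_2}$, $\overline{\cT_3}$, $\overline{\cT_6}$, and show that for $(p,q)\equiv(7,3)\pmod 8$ none of them has an integer solution satisfying \eqref{eq:gcd}. Once this is done, the only surviving classes are the ones that are always present, namely $1$ (from $\overline{\cO}$) and $\overline{b}=8pq\equiv 2pq \pmod{{\bbQ^*}^2}$ (from the point $(0,0)$ of $\overline{E_{-2pq}}$). This gives an equality $\overline{\alpha}\left(\overline{E_{-2pq}}(\bbQ)\right)=\{1,2pq\}$ rather than just an inclusion.

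The three exclusions go in order, each by checking one necessary condition.
\begin{itemize}
\item[(a)] $\overline{\cT_2}$ is ruled out by Lemma~\ref{lem:oT2}, which requires $p\equiv 1\pmod 8$, whereas here $p\equiv 7\pmod 8$. Hence the pair $\{p,2q\}$ does not occur; by Lemma~\ref{lem:oT4} this also disposes of $\overline{\cT_5}$.
\item[(b)] $\overline{\cT_3}$ is ruled out by Lemma~\ref{lem:oT3}, which requires $q\equiv 1\pmod 8$, whereas $q\equiv 3\pmod 8$. Hence the pair $\{q,2p\}$ does not occur, and with it $\overline{\cT_4}$.
\item[(c)] $\overline{\cT_6}$ is ruled out by Lemma~\ref{lem:oT6}, which requires $\legendre{2}{q}=1$. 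But $q\equiv 3\pmod 8$ gives $\legendre{2}{q}=-1$; alternatively, $pq\equiv 21\equiv 5\not\equiv 1\pmod 8$. So the pair $\{2,pq\}$ does not occur, and by Lemma~\ref{lem:oT1} neither does $\overline{\cT_1}$.
\end{itemize}

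No negative $b_1$ needs to be examined, by the remark preceding $\overline{T}$: if $b_1<0$ then $b_2=8pq/b_1<0$ as well, and $N^2=b_1M^4+b_2e^4<0$ has no solution. Since none of these conditions involves $\legendre{p}{q}$, the conclusion holds for both values of the Legendre symbol.

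There is no real obstacle here; the only point that needs care is bookkeeping. Each of the six nontrivial classes in $\overline{T}$ must be covered by exactly one of the complementary pairs of Lemmas~\ref{lem:oT1} and \ref{lem:oT4}. That is what justifies checking only the three representatives.
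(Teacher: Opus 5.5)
Your proposal is correct and matches the paper's proof. Like the paper, you rule out the representatives $\overline{\cT_2}$, $\overline{\cT_3}$ and $\overline{\cT_6}$ via Lemmas~\ref{lem:oT2}, \ref{lem:oT3} and \ref{lem:oT6} (using $p\not\equiv 1\pmod 8$, $q\not\equiv 1\pmod 8$ and $\legendre{2}{q}=-1$), conclude by the complementary pairing, and your remarks on negative $b_1$ and the pairing bookkeeping only spell out what the paper states just before the lemma.
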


\begin{proof}
Lemma~\ref{lem:oT2} does not apply because $p\not\equiv1\pmod8$,
Lemma~\ref{lem:oT3} does not apply because $q\not\equiv1\pmod8$, and
Lemma~\ref{lem:oT6} does not apply because $\legendre{2}{q}=-1$.
Thus no additional complementary pair can occur.
\end{proof}


\begin{lem}
\label{lem:oalpha_2.4}
If $(p,q)\equiv (7,5)\pmod 8$, then
\[
\overline{\alpha}\left(\overline{E_{-2pq}}(\bbQ)\right)
= \{1,2pq\} \quad \text{ for both } \legendre{p}{q}=1 \quad \text{and } \legendre{p}{q}=-1.
\]
\end{lem}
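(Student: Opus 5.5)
The plan is to follow the same pattern as Lemma~\ref{lem:oalpha_2.3}. By Remark~\ref{rem:3.2}, the only classes that can enlarge $\overline{\alpha}\left(\overline{E_{-2pq}}(\bbQ)\right)$ beyond $\{1,2pq\}$ are the three complementary pairs $\{p,2q\}$, $\{q,2p\}$ and $\{2,pq\}$. By Lemmas~\ref{lem:oT1} and~\ref{lem:oT4}, each pair occurs exactly when the torsors $\overline{\cT_2}$, $\overline{\cT_3}$ and $\overline{\cT_6}$, respectively, have a solution satisfying \eqref{eq:gcd}. So it suffices to show that none of these three torsors has such a solution when $(p,q)\equiv(7,5)\pmod 8$, whatever the value of $\legendre{p}{q}$.

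Next I would check each torsor against the necessary conditions already proved.
\begin{itemize}
\item For $\overline{\cT_2}$, Lemma~\ref{lem:oT2} requires $p\equiv1\pmod8$. This fails because $p\equiv7\pmod8$.
\item For $\overline{\cT_3}$, Lemma~\ref{lem:oT3} requires $q\equiv1\pmod8$. This fails because $q\equiv5\pmod8$.
\item For $\overline{\cT_6}$, Lemma~\ref{lem:oT6} requires $\legendre{2}{q}=1$. This fails because $q\equiv5\pmod8$ gives $\legendre{2}{q}=-1$. Independently, $pq\equiv35\equiv3\not\equiv1\pmod8$, so the congruence condition of that lemma also fails.
\end{itemize}
None of these conditions involves $\legendre{p}{q}$, so the conclusion is the same in both Legendre-symbol cases.

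It follows that $\overline{\alpha}\left(\overline{E_{-2pq}}(\bbQ)\right)\subseteq\{1,2pq\}$. Since $1$ and $2pq$ always belong to the image, as noted at the start of this subsection, we get equality.

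There is no real obstacle here. The only point needing care is that the necessary conditions in Lemmas~\ref{lem:oT2}--\ref{lem:oT6} were derived under \eqref{eq:gcd}, in particular with $M$ odd. This is legitimate because any point of $\overline{E_{-2pq}}(\bbQ)$ giving one of these classes yields a torsor solution normalized in that way.
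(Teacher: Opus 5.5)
Your proposal is correct and is exactly the paper's argument. The paper proves this lemma by referring to the proof of Lemma~\ref{lem:oalpha_2.3}, which rules out $\overline{\cT_2}$, $\overline{\cT_3}$ and $\overline{\cT_6}$ using $p\not\equiv 1\pmod 8$, $q\not\equiv 1\pmod 8$ and $\legendre{2}{q}=-1$, just as you do; your additional observation that $pq\equiv 3\pmod 8$ is a harmless redundancy.
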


\begin{proof}
The proof is identical to that of Lemma~\ref{lem:oalpha_2.3}.
\end{proof}

\subsection*{Case-III:} Now we consider the case when $\legendre{2}{p}=-1$ and $\legendre{2}{q}=1$. It gives $(p,q)\equiv (3,1),(3,7),(5,1),(5,7)\pmod 8$.

These four cases are obtained from Case-II by interchanging $p$ and $q$ that is following Lemmas~\ref{lem:oalpha_2.1}-\ref{lem:oalpha_2.4}.

\begin{lem}
\label{lem:oalpha_3.1}
If $(p,q)\equiv (3,1)\pmod 8$, then
\[
\overline{\alpha}\left(\overline{E_{-2pq}}(\bbQ)\right)
\begin{aligned}
&\subseteq \{1,q,2p,2pq\},
&& \text{for } \legendre{p}{q}=1,\\
&= \{1,2pq\},
&& \text{for } \legendre{p}{q}=-1.
\end{aligned}
\]
\end{lem}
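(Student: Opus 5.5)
The plan is to deduce the statement from Lemma~\ref{lem:oalpha_2.1} by interchanging $p$ and $q$, exactly as Lemma~\ref{lem:oalpha_1.3} was obtained from Lemma~\ref{lem:oalpha_1.2}. Since $(p,q)\equiv(3,1)\pmod 8$ is the swap of the class $(1,3)$, the only subtle point is how the Legendre symbol transforms. Because $q\equiv 1\pmod 4$, quadratic reciprocity gives $\legendre{p}{q}=\legendre{q}{p}$, so the sign condition is preserved under the interchange. I will nevertheless argue directly from Remark~\ref{rem:3.2}, which reduces the problem to $\overline{\cT_2}$, $\overline{\cT_3}$ and $\overline{\cT_6}$, to keep the argument self-contained.

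The first step is to discard $\overline{\cT_2}$ and $\overline{\cT_6}$. Lemma~\ref{lem:oT2} requires $p\equiv 1\pmod 8$, which fails since $p\equiv 3\pmod 8$. Lemma~\ref{lem:oT6} requires $\legendre{2}{p}=1$, which fails since $\legendre{2}{p}=-1$ for $p\equiv 3\pmod 8$. By Lemma~\ref{lem:oT1}, the pair $\{2,pq\}$ therefore does not occur. By Lemma~\ref{lem:oT4}, the pair $\{p,2q\}$ (from $\overline{\cT_2}$ and $\overline{\cT_5}$) does not occur either.

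The second step is to examine $\overline{\cT_3}$ through Lemma~\ref{lem:oT3}. The congruence $q\equiv 1\pmod 8$ holds. The remaining conditions are $\legendre{q}{p}=1$ and $\legendre{2p}{q}=1$.
\begin{itemize}
\item Since $q\equiv 1\pmod 8$, we have $\legendre{2}{q}=1$, so $\legendre{2p}{q}=\legendre{p}{q}$.
\item By reciprocity, using $q\equiv 1\pmod 4$, $\legendre{q}{p}=\legendre{p}{q}$.
\end{itemize}
Both conditions therefore reduce to $\legendre{p}{q}=1$.

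It follows that when $\legendre{p}{q}=1$, only the pair $\{q,2p\}$ can join $\{1,2pq\}$, which gives the inclusion $\overline{\alpha}\left(\overline{E_{-2pq}}(\bbQ)\right)\subseteq\{1,q,2p,2pq\}$. When $\legendre{p}{q}=-1$, no torsor survives. Since $1$ and $2pq$ always lie in the image, we obtain the equality $\overline{\alpha}\left(\overline{E_{-2pq}}(\bbQ)\right)=\{1,2pq\}$.

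No real obstacle is expected. The only care needed is the reciprocity step, and it goes through because $q\equiv 1\pmod 4$ makes the Legendre-symbol sign invariant under the swap.
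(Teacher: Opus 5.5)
Your proposal is correct and matches the paper, which obtains this lemma from Lemma~\ref{lem:oalpha_2.1} by interchanging $p$ and $q$. Your direct check via Lemmas~\ref{lem:oT2}, \ref{lem:oT3} and \ref{lem:oT6} is exactly the proof of Lemma~\ref{lem:oalpha_2.1} with the roles of $p$ and $q$ swapped. Your reciprocity step, using $q\equiv 1\pmod 4$, correctly confirms that the sign of $\legendre{p}{q}$ is preserved under the swap.
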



\begin{lem}
\label{lem:oalpha_3.2}
If $(p,q)\equiv (3,7)\pmod 8$, then
\[
\overline{\alpha}\left(\overline{E_{-2pq}}(\bbQ)\right)
= \{1,2pq\}\quad \text{ for both } \legendre{p}{q}=1 \quad \text{and } \legendre{p}{q}=-1.
\]
\end{lem}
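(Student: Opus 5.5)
The plan is to treat $(p,q)\equiv(3,7)\pmod 8$ the same way as Lemma~\ref{lem:oalpha_2.3}. By Remark~\ref{rem:3.2}, the only square classes that could enlarge $\overline{\alpha}\left(\overline{E_{-2pq}}(\bbQ)\right)$ beyond $\{1,2pq\}$ come from the three complementary pairs $\{p,2q\}$, $\{q,2p\}$ and $\{2,pq\}$. These pairs are controlled by the torsors $\overline{\cT_2}$, $\overline{\cT_3}$ and $\overline{\cT_6}$, with Lemmas~\ref{lem:oT1} and~\ref{lem:oT4} showing that each pair stands or falls together. The negative divisors of $8pq$ need no attention: for them both $b_1$ and $b_2$ are negative, so the right-hand side $b_1M^4+b_2e^4$ is negative and the torsor has no solutions.

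So it suffices to show that each of the three representative torsors violates a necessary condition, whichever value $\legendre{p}{q}$ takes.

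\begin{itemize}
\item For $\overline{\cT_2}: N^2=pM^4+8qe^4$, Lemma~\ref{lem:oT2} requires $p\equiv1\pmod 8$. This fails because $p\equiv 3\pmod 8$.
\item For $\overline{\cT_3}: N^2=qM^4+8pe^4$, Lemma~\ref{lem:oT3} requires $q\equiv 1\pmod 8$. This fails because $q\equiv 7\pmod 8$.
\item For $\overline{\cT_6}: N^2=pqM^4+8e^4$, Lemma~\ref{lem:oT6} requires $\legendre{2}{p}=1$. This fails because $p\equiv 3\pmod 8$. It also fails $pq\equiv1\pmod 8$, since $pq\equiv 21\equiv 5\pmod 8$.
\end{itemize}

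None of these failures involves a Legendre symbol between $p$ and $q$, so the conclusion holds for both $\legendre{p}{q}=1$ and $\legendre{p}{q}=-1$.

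Hence none of the classes $2,p,q,2p,2q,pq$ lies in $\overline{\alpha}\left(\overline{E_{-2pq}}(\bbQ)\right)$. Since $1$ and $2pq$ always lie there (the images of $\overline{\cO}$ and $(0,0)$), we get equality with $\{1,2pq\}$.

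Equivalently, the statement is Lemma~\ref{lem:oalpha_2.3} with $p$ and $q$ interchanged, since $(7,3)$ becomes $(3,7)$. The defining conditions are symmetric under this swap apart from the Legendre symbol, and the Legendre symbol plays no role here, so the sign reversal under quadratic reciprocity is irrelevant. There is no real obstacle. The only point needing care is to check the mod-$8$ congruences directly rather than importing them blindly.
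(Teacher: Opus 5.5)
Your route is exactly the paper's. The paper obtains this lemma from Lemma~\ref{lem:oalpha_2.3} by interchanging $p$ and $q$, and that lemma is proved by the same three checks you carry out: $p\not\equiv 1\pmod 8$ rules out $\overline{\cT_2}$, $q\not\equiv 1\pmod 8$ rules out $\overline{\cT_3}$, and $\legendre{2}{p}=-1$ (or $pq\equiv 5\pmod 8$) rules out $\overline{\cT_6}$. These verifications are correct.

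However, the reduction from six torsors to three is not justified by the complementary-pair argument. You take this step from Lemmas~\ref{lem:oT1}, \ref{lem:oT4} and Remark~\ref{rem:3.2}, so your proof inherits the gap from the paper.

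\begin{itemize}
\item Adding $(0,0)$ sends $x\mapsto 8pq/x$. So a solution $(N,M,e)$ of the torsor for $(b_1,b_2)$ goes to the solution $(N,e,M)$ of the torsor for $(b_2,b_1)$. In other words, multiplication by $\overline{b}$ \emph{is} the swap of $M$ and $e$.
\item Table~\ref{table:oT-torsors} has already been reduced by this symmetry: it lists exactly one torsor per unordered pair $\{b_1,b_2\}$. You cannot use the same symmetry a second time.
\item Because $8pq$ is not squarefree, each square class has two divisor representatives, $d$ and $4d$. For instance, a point of class $p$ may have $b_1=4p$, $b_2=2q$, i.e.\ it may come from $\overline{\cT_5}$ with $M$ and $e$ interchanged. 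The insolubility of $\overline{\cT_2}$ (whose gcd conditions force $M$ odd) says nothing about such a point.
\item Likewise, $\overline{\cT_1}$ and $\overline{\cT_4}$ (swapped) could produce the classes $pq$ and $q$ even if $\overline{\cT_6}$ and $\overline{\cT_3}$ are insoluble.
\end{itemize}

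The fix is one line. In $\overline{\cT_1},\overline{\cT_4},\overline{\cT_5}$ we have $b_2\in\{4pq,4q,4p\}$, so $\gcd(b_2,M)=1$ forces $M$ odd. Since $b_1\in\{2,2p,2q\}$, this gives $N^2\equiv 2\pmod 4$, which is impossible. With this added, all six torsors are insoluble, and $\overline{\alpha}\left(\overline{E_{-2pq}}(\bbQ)\right)=\{1,2pq\}$ follows.
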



\begin{lem}
\label{lem:oalpha_3.3}
If $(p,q)\equiv (5,1)\pmod 8$, then
\[
\overline{\alpha}\left(\overline{E_{-2pq}}(\bbQ)\right)
\begin{aligned}
&\subseteq \{1,q,2p,2pq\},
&& \text{for } \legendre{p}{q}=1,\\
&= \{1,2pq\},
&& \text{for } \legendre{p}{q}=-1.
\end{aligned}
\]
\end{lem}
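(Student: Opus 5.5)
The plan is to argue exactly as in Lemma~\ref{lem:oalpha_2.1}, with $p$ and $q$ interchanged. We use the same three representatives as before, and by Remark~\ref{rem:3.2} it suffices to check the torsors $\overline{\cT_2}$, $\overline{\cT_3}$ and $\overline{\cT_6}$. We always have $1,2pq\in\overline{\alpha}\left(\overline{E_{-2pq}}(\bbQ)\right)$. Each complementary pair $\{p,2q\}$, $\{q,2p\}$, $\{2,pq\}$ can contribute only if its representative torsor passes the necessary conditions of Lemmas~\ref{lem:oT2}, \ref{lem:oT3} and \ref{lem:oT6}.

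First, $\overline{\cT_2}$ is excluded because Lemma~\ref{lem:oT2} requires $p\equiv1\pmod 8$, whereas $p\equiv5\pmod8$. By Lemma~\ref{lem:oT4}, this also excludes the pair $\{p,2q\}$. Next, $\overline{\cT_6}$ is excluded because Lemma~\ref{lem:oT6} requires $\legendre{2}{p}=1$, but $\legendre{2}{p}=-1$ for $p\equiv5\pmod 8$. By Lemma~\ref{lem:oT1}, this excludes the pair $\{2,pq\}$.

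It remains to treat $\overline{\cT_3}$, which carries the pair $\{q,2p\}$. Lemma~\ref{lem:oT3} needs $q\equiv1\pmod8$, which holds, together with $\legendre{q}{p}=1$ and $\legendre{2p}{q}=1$. Since $q\equiv1\pmod 8$, we have $\legendre{2}{q}=1$, so $\legendre{2p}{q}=\legendre{p}{q}$. Quadratic reciprocity with $p\equiv q\equiv1\pmod4$ gives $\legendre{q}{p}=\legendre{p}{q}$. Hence all the conditions reduce to $\legendre{p}{q}=1$.

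So when $\legendre{p}{q}=1$, only the pair $\{q,2p\}$ can occur, and $\overline{\alpha}\subseteq\{1,q,2p,2pq\}$. When $\legendre{p}{q}=-1$, no pair can occur, and $\overline{\alpha}=\{1,2pq\}$. In that case we have equality because $1$ and $2pq$ always lie in the image. There is no real obstacle here; the only care needed is the reciprocity sign check, which is trivial since both primes are $\equiv1\pmod4$.
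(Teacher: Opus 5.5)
Your proof is correct and is essentially the paper's argument. The paper obtains this case from Lemma~\ref{lem:oalpha_2.2}, whose proof is that of Lemma~\ref{lem:oalpha_2.1}, by interchanging $p$ and $q$; that interchange amounts to exactly your check of $\overline{\cT_2}$, $\overline{\cT_3}$ and $\overline{\cT_6}$, including the reciprocity step reducing the conditions of Lemma~\ref{lem:oT3} to $\legendre{p}{q}=1$.
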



\begin{lem}
\label{lem:oalpha_3.4}
If $(p,q)\equiv (5,7)\pmod 8$, then
\[
\overline{\alpha}\left(\overline{E_{-2pq}}(\bbQ)\right)
= \{1,2pq\} \quad \text{ for both } \legendre{p}{q}=1 \quad \text{and } \legendre{p}{q}=-1.
\]
\end{lem}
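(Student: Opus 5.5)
The plan is to derive Lemma~\ref{lem:oalpha_3.4} directly from the three torsor criteria, Lemmas~\ref{lem:oT2}, \ref{lem:oT3} and \ref{lem:oT6}. We use the reduction recorded in Remark~\ref{rem:3.2}: by Lemmas~\ref{lem:oT1} and \ref{lem:oT4}, the six candidate classes in $\overline{T}$ fall into the three complementary pairs $\{2,pq\}$, $\{p,2q\}$ and $\{q,2p\}$. Each pair can enter $\overline{\alpha}\left(\overline{E_{-2pq}}(\bbQ)\right)$ only if $\overline{\cT_6}$, $\overline{\cT_2}$ or $\overline{\cT_3}$, respectively, has a solution satisfying \eqref{eq:gcd}. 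Negative $b_1$ were already excluded, since $b_1b_2=8pq>0$ and both coefficients would be negative. So it suffices to show that none of these three torsors is solvable when $(p,q)\equiv(5,7)\pmod 8$.

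The three checks use only congruence conditions modulo $8$, so they hold for either value of $\legendre{p}{q}$.
\begin{enumerate}
\item[(1)] $\overline{\cT_2}$ requires $p\equiv1\pmod8$ by Lemma~\ref{lem:oT2}, which fails since $p\equiv5$.
\item[(2)] $\overline{\cT_3}$ requires $q\equiv1\pmod8$ by Lemma~\ref{lem:oT3}, which fails since $q\equiv7$.
\item[(3)] $\overline{\cT_6}$ requires $\legendre{2}{p}=1$ by Lemma~\ref{lem:oT6}, which fails since $p\equiv5\pmod8$ gives $\legendre{2}{p}=-1$. Independently, $pq\equiv35\equiv3\not\equiv1\pmod8$.
\end{enumerate}
Hence no additional pair occurs, and $\overline{\alpha}\left(\overline{E_{-2pq}}(\bbQ)\right)\subseteq\{1,2pq\}$.

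For the reverse inclusion, the identity point gives the class $1$, and the point $(0,0)$ gives $\overline{b}=8pq\equiv 2pq \pmod{{\bbQ^*}^2}$. Both always belong to the image, which yields equality.

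Alternatively, the statement follows from Lemma~\ref{lem:oalpha_2.4} by interchanging $p$ and $q$. That lemma treats $(7,5)$ and is independent of the Legendre symbol, so no reciprocity sign issue arises. The only point needing care is the claim that negative $b_1$ are impossible. This is immediate, because a torsor $N^2=b_1M^4+b_2e^4$ with $b_1,b_2<0$ has only the trivial solution. I do not expect a genuine obstacle.
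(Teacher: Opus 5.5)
Your proposal is correct and matches the paper's argument. The paper obtains this lemma from Lemma~\ref{lem:oalpha_2.4} by interchanging $p$ and $q$, and that proof is exactly your check: $\overline{\cT_2}$, $\overline{\cT_3}$ and $\overline{\cT_6}$ are excluded by the conditions of Lemmas~\ref{lem:oT2}, \ref{lem:oT3} and \ref{lem:oT6} (here $p\not\equiv 1$, $q\not\equiv 1 \pmod 8$ and $\legendre{2}{p}=-1$), while $1$ and $2pq$ always lie in the image. Like the paper, you use only one divisor $b_1$ per square class; the other representatives $b_1=4p,4q,4pq$ (with $b_2=2q,2p,2$) force $e$ odd and hence $N^2\equiv 2\pmod 4$, so they add nothing and the conclusion stands.
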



\subsection*{Case IV:}


Finally, we shall consider the case when $\legendre{2}{p}=-1$ and $\legendre{2}{q}=-1$. It gives $(p,q)\equiv (3,3),(3,5),(5,3),(5,5)\pmod 8$.

In all four residue classes, neither $p$ nor $q$ is congruent to $1\pmod8$;
therefore Lemmas~\ref{lem:oT2} and~\ref{lem:oT3} do not apply. Moreover,
$\legendre{2}{p}=\legendre{2}{q}=-1$, so Lemma~\ref{lem:oT6} does not apply.
Thus the same argument proves all four lemmas below.

\begin{lem}
\label{lem:oalpha_4.1} 
If $(p,q)\equiv (3,3)\pmod 8$, then
\[
\overline{\alpha}\left(\overline{E_{-2pq}}(\bbQ)\right)
= \{1,2pq\} \quad \text{ for both } \legendre{p}{q}=1 \quad \text{and } \legendre{p}{q}=-1.
\]
\end{lem}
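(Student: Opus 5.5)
The plan is to combine the two ingredients the paper has already set up for $\overline{\alpha}$. First, $1$ and $2pq$ always belong to $\overline{\alpha}\left(\overline{E_{-2pq}}(\bbQ)\right)$: $1$ is the image of $\overline{\cO}$, and $2pq\equiv 8pq=\overline{b}$ is the image of $(0,0)$ modulo squares. This gives the inclusion $\{1,2pq\}\subseteq\overline{\alpha}\left(\overline{E_{-2pq}}(\bbQ)\right)$. Second, we show that no other square class can occur, which gives the reverse inclusion and hence equality.

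For the reverse inclusion, we use the reduction already made before Table~\ref{table:oT-torsors}. Negative $b_1$ are excluded because both coefficients of $N^2=b_1M^4+b_2e^4$ would then be negative. So any extra element lies in $\overline{T}=\{2,p,q,2p,2q,pq\}$. By Remark~\ref{rem:3.2}, together with Lemmas~\ref{lem:oT1} and~\ref{lem:oT4}, these six classes come in the complementary pairs $\{2,pq\}$, $\{p,2q\}$ and $\{q,2p\}$. Since $\overline{\alpha}\left(\overline{E_{-2pq}}(\bbQ)\right)$ is a group containing $2pq$, each pair is either entirely present or entirely absent. It therefore suffices to show that $\overline{\cT_2}$, $\overline{\cT_3}$ and $\overline{\cT_6}$ have no solutions satisfying \eqref{eq:gcd}.

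Now specialize to $(p,q)\equiv(3,3)\pmod 8$.
\begin{itemize}
\item Lemma~\ref{lem:oT2} requires $p\equiv 1\pmod 8$, which fails, so $\overline{\cT_2}$ is insoluble.
\item Lemma~\ref{lem:oT3} requires $q\equiv1\pmod 8$, which fails, so $\overline{\cT_3}$ is insoluble.
\item Lemma~\ref{lem:oT6} requires $\legendre{2}{p}=1$. But $p\equiv 3\pmod 8$ gives $\legendre{2}{p}=-1$, so $\overline{\cT_6}$ is insoluble. (The condition $pq\equiv1\pmod 8$ happens to hold here, since $9\equiv 1$, so it is specifically the Legendre condition that rules this torsor out.)
\end{itemize}
None of these arguments uses the value of $\legendre{p}{q}$, so the conclusion holds in both cases $\legendre{p}{q}=\pm1$.

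The only delicate point is the legitimacy of restricting to positive $b_1$ and to one representative per complementary pair, rather than the case check itself. If one wanted extra care, I would directly verify that $N^2=b_1M^4+b_2e^4$ with $b_1,b_2<0$ has no nontrivial real solutions, which is immediate. I would also note that the pair reduction is just closure of the image under multiplication by $2pq$. With these two points in place, the lemma follows immediately.
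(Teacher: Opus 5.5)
Your proposal is correct and is essentially the paper's own proof: the paper also reduces, via the complementary pairs of Lemmas~\ref{lem:oT1}, \ref{lem:oT4} and Remark~\ref{rem:3.2}, to the three torsors $\overline{\cT_2}$, $\overline{\cT_3}$, $\overline{\cT_6}$. It rules them out exactly as you do, because $p,q\not\equiv 1\pmod 8$ and $\legendre{2}{p}=\legendre{2}{q}=-1$, with no dependence on $\legendre{p}{q}$. One point that you and the paper both leave implicit is that a class such as $p$ could also arise from the non-squarefree divisor $b_1=4p$ with $b_2=2q$ (and likewise $b_1=4q$ or $4pq$); this is harmless, since the gcd conditions then force $M$ and $e$ odd, giving $N^2\equiv 2\pmod 4$.
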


\begin{proof}
The preceding observation rules out all three representative torsors
$\overline{\cT_2}$, $\overline{\cT_3}$ and $\overline{\cT_6}$.
Hence the image is $\{1,2pq\}$.
\end{proof}

Following the above proof of Lemma~\ref{lem:oalpha_4.1}, one can easily obtain the next three cases.
\begin{lem}
\label{lem:oalpha_4.2} 
If $(p,q)\equiv (3,5)\pmod 8$, then
\[
\overline{\alpha}\left(\overline{E_{-2pq}}(\bbQ)\right)
= \{1,2pq\} \quad \text{ for both } \legendre{p}{q}=1 \quad \text{and } \legendre{p}{q}=-1.
\]
\end{lem}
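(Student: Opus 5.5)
The plan is to follow the proof of Lemma~\ref{lem:oalpha_4.1}. By Remark~\ref{rem:3.2}, the group $\overline{\alpha}\left(\overline{E_{-2pq}}(\bbQ)\right)$ is determined by the solvability of the three representative torsors $\overline{\cT_2}$, $\overline{\cT_3}$ and $\overline{\cT_6}$. Lemmas~\ref{lem:oT1} and~\ref{lem:oT4} show that the remaining classes $2$, $2p$, $2q$ occur if and only if their complements $pq$, $q$, $p$ occur. So it suffices to show that none of the three representative torsors has a solution satisfying \eqref{eq:gcd} when $(p,q)\equiv(3,5)\pmod 8$.

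The three checks use only the residues of $p$ and $q$ modulo $8$. For $\overline{\cT_2}: N^2=pM^4+8qe^4$, Lemma~\ref{lem:oT2} requires $p\equiv 1\pmod 8$, but $p\equiv 3\pmod 8$, so there is no solution. For $\overline{\cT_3}: N^2=qM^4+8pe^4$, Lemma~\ref{lem:oT3} requires $q\equiv1\pmod 8$, but $q\equiv 5\pmod 8$. For $\overline{\cT_6}: N^2=pqM^4+8e^4$, Lemma~\ref{lem:oT6} requires $\legendre{2}{p}=1$, but $p\equiv3\pmod 8$ gives $\legendre{2}{p}=-1$. Independently, $pq\equiv 15\equiv 7\pmod 8$, which violates the condition $pq\equiv1\pmod 8$.

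None of these arguments uses $\legendre{p}{q}$, so the conclusion holds for both values of the Legendre symbol. The negative classes need no separate treatment: as noted before Table~\ref{table:oT-torsors}, when $b_1<0$ we also have $b_2<0$, and then $N^2=b_1M^4+b_2e^4<0$ has no solution. Hence only the trivial classes $1$ and $2pq$ remain. Both genuinely lie in the image, as $\overline{\alpha}(\overline{\cO})$ and $\overline{\alpha}((0,0))=8pq\equiv 2pq$. This gives equality, $\overline{\alpha}\left(\overline{E_{-2pq}}(\bbQ)\right)=\{1,2pq\}$, rather than just an inclusion.

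I do not expect a real obstacle. The only point that needs care is the completeness of the reduction. Modulo squares, the divisors of $8pq$ are the classes $\pm1,\pm2,\pm p,\pm q,\pm2p,\pm2q,\pm pq,\pm2pq$. We must make sure that each positive class other than $1$ and $2pq$ is covered, either directly by one of the three representative torsors or through its complementary pair. That bookkeeping is exactly what Remark~\ref{rem:3.2} records, so the proof reduces to the three congruence checks above.
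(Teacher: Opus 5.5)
Your proposal is correct and takes essentially the same approach as the paper. The paper's proof (via the Case~IV observation and the proof of Lemma~\ref{lem:oalpha_4.1}) also rules out $\overline{\cT_2}$, $\overline{\cT_3}$ and $\overline{\cT_6}$ because $p\not\equiv1$, $q\not\equiv1\pmod 8$ and $\legendre{2}{p}=-1$, and then uses the complementary-pair reduction of Remark~\ref{rem:3.2}; your extra points (the independent obstruction $pq\equiv 7\pmod 8$, and the check that $1$ and $2pq$ really lie in the image, which gives equality rather than inclusion) are harmless elaborations of that argument.
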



\begin{lem}
\label{lem:oalpha_4.3} 
If $(p,q)\equiv (5,3)\pmod 8$, then
\[
\overline{\alpha}\left(\overline{E_{-2pq}}(\bbQ)\right)
= \{1,2pq\} \quad \text{ for both } \legendre{p}{q}=1 \quad \text{and } \legendre{p}{q}=-1.
\]
\end{lem}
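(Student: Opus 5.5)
The statement to prove is Lemma~\ref{lem:oalpha_4.3}: for $(p,q)\equiv(5,3)\pmod 8$, the image $\overline{\alpha}\left(\overline{E_{-2pq}}(\bbQ)\right)$ is exactly $\{1,2pq\}$. I will argue exactly as in Lemma~\ref{lem:oalpha_4.1}, using the reduction in Remark~\ref{rem:3.2}.

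\textbf{Setup.} Negative divisors $b_1$ of $8pq$ are excluded, since $b_1$ and $b_2$ would then both be negative and $N^2=b_1M^4+b_2e^4<0$. So the only possible additional square classes are the six elements of $\overline{T}$. These fall into the three complementary pairs $\{2,pq\}$, $\{p,2q\}$ and $\{q,2p\}$. By Lemmas~\ref{lem:oT1} and~\ref{lem:oT4}, each pair is present or absent as a whole. Hence it suffices to show that the representative torsors $\overline{\cT_2}$, $\overline{\cT_3}$ and $\overline{\cT_6}$ have no solutions satisfying \eqref{eq:gcd}.

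\textbf{Ruling out each torsor.}
\begin{itemize}
\item $\overline{\cT_2}$: Lemma~\ref{lem:oT2} requires $p\equiv 1\pmod 8$, but $p\equiv 5\pmod 8$.
\item $\overline{\cT_3}$: Lemma~\ref{lem:oT3} requires $q\equiv 1\pmod 8$, but $q\equiv 3\pmod 8$.
\item $\overline{\cT_6}$: Lemma~\ref{lem:oT6} requires $\legendre{2}{p}=\legendre{2}{q}=1$ and $pq\equiv 1\pmod 8$. Here $\legendre{2}{p}=\legendre{2}{q}=-1$, and also $pq\equiv 15\equiv 7\pmod 8$, so it fails on two counts.
\end{itemize}
None of these arguments uses the Legendre symbol $\legendre{p}{q}$, so the conclusion holds for both values.

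\textbf{Conclusion.} The image contains at most $1$ and $2pq$. It contains both, since $1$ is the image of $\overline{\cO}$ and $\overline{b}=8pq\equiv 2pq$ is the image of $(0,0)$. Therefore equality holds.

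There is no real obstacle. The only point needing care is that the mod-$8$ conditions in Lemmas~\ref{lem:oT2}--\ref{lem:oT6} are applied to the right prime in each case.
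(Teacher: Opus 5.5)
Your proof is correct and is exactly the paper's argument. The Case IV observation rules out $\overline{\cT_2}$ and $\overline{\cT_3}$ because neither $p$ nor $q$ is $\equiv 1 \pmod 8$, and rules out $\overline{\cT_6}$ because $\legendre{2}{p}=\legendre{2}{q}=-1$; the pairing from Lemmas~\ref{lem:oT1} and~\ref{lem:oT4} then leaves only $\{1,2pq\}$ (your extra remark that $pq\equiv 7\pmod 8$ is a redundant but valid second obstruction for $\overline{\cT_6}$).
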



\begin{lem}
\label{lem:oalpha_4.4}
If $(p,q)\equiv (5,5)\pmod 8$, then
\[
\overline{\alpha}\left(\overline{E_{-2pq}}(\bbQ)\right)
= \{1,2pq\} \quad \text{ for both } \legendre{p}{q}=1 \quad \text{and } \legendre{p}{q}=-1.
\]
\end{lem}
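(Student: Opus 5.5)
The statement to prove is Lemma~\ref{lem:oalpha_4.4}: for $(p,q)\equiv(5,5)\pmod 8$, $\overline{\alpha}\bigl(\overline{E_{-2pq}}(\bbQ)\bigr)=\{1,2pq\}$. I would follow the same scheme as Lemma~\ref{lem:oalpha_4.1}.

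First I will show that both classes $1$ and $2pq$ always lie in the image, which gives the inclusion $\supseteq$. The class $1$ is the image of $\overline{\cO}$. The class $2pq$ is the image of $(0,0)$, since $\overline{b}=8pq\equiv 2pq$ modulo squares.

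For the reverse inclusion, I will first discard negative $b_1$. If $b_1<0$, then $b_2=8pq/b_1<0$ as well, so the torsor $N^2=b_1M^4+b_2e^4$ has a negative right-hand side when $M,e\neq0$ and therefore has no solutions. That leaves the positive classes $\overline{T}=\{2,p,q,2p,2q,pq\}$. By Remark~\ref{rem:3.2}, which combines Lemmas~\ref{lem:oT1} and~\ref{lem:oT4}, these split into the complementary pairs $\{2,pq\}$, $\{p,2q\}$ and $\{q,2p\}$, each occurring or failing together. So it suffices to rule out the representative torsors $\overline{\cT_6}$, $\overline{\cT_2}$ and $\overline{\cT_3}$.

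Each representative is then ruled out by its earlier necessary condition:
\begin{itemize}
\item Lemma~\ref{lem:oT2} requires $p\equiv1\pmod8$, which fails since $p\equiv5\pmod 8$.
\item Lemma~\ref{lem:oT3} requires $q\equiv1\pmod8$, which fails likewise.
\item Lemma~\ref{lem:oT6} requires $\legendre{2}{p}=1$, but $\legendre{2}{p}=-1$ for $p\equiv5\pmod8$. Independently, it requires $pq\equiv1\pmod 8$, and here $pq\equiv25\equiv1\pmod8$ does hold, so it is the Legendre-symbol condition that rules out $\overline{\cT_6}$.
\end{itemize}
These conditions are independent of $\legendre{p}{q}$, so the conclusion holds in both cases.

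There is no real obstacle. The only point needing care is checking that the discarded torsors genuinely account for every square class dividing $8pq$, and this is exactly what the complementary-pair bookkeeping in Lemmas~\ref{lem:oT1} and~\ref{lem:oT4} provides.
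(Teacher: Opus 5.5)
Your proposal is correct and follows the paper's own argument. After the complementary-pair reduction of Remark~\ref{rem:3.2}, $\overline{\cT_2}$ and $\overline{\cT_3}$ are excluded because $p,q\not\equiv 1\pmod 8$, and $\overline{\cT_6}$ is excluded because $\legendre{2}{p}=-1$; this is exactly the Case IV observation preceding Lemma~\ref{lem:oalpha_4.1}, and your note that $pq\equiv 1\pmod 8$ does hold here, so that only the Legendre-symbol condition rules out $\overline{\cT_6}$, is accurate.
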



\section{Proof of Theorems \ref{thm:1.1}-\ref{thm:1.4}}

Now using \S 2 and  \S 3, we prove Theorems \ref{thm:1.1}-\ref{thm:1.4}.

\subsection*{Proof of Theorem \ref{thm:1.1}}
Let $r=\rank(E_{-2pq}(\bbQ))$. By Lemmas~\ref{lem:alpha_case_1.1}
and~\ref{lem:oalpha_1.1},
\[
\begin{aligned}
|\alpha(E_{-2pq}(\bbQ))|&\leq16,
&\left|\overline{\alpha}\left(\overline{E_{-2pq}}(\bbQ)\right)\right|&\leq8,
&&\text{for }\legendre{p}{q}=1,\\
|\alpha(E_{-2pq}(\bbQ))|&\leq8,
&\left|\overline{\alpha}\left(\overline{E_{-2pq}}(\bbQ)\right)\right|&\leq4,
&&\text{for }\legendre{p}{q}=-1.
\end{aligned}
\]
Therefore, Proposition~\ref{prop:2.1} gives
$ \displaystyle
r\leq5 \quad\text{for }\legendre{p}{q}=1,$ and $ \displaystyle r\leq3 \quad\text{for }\legendre{p}{q}=-1.
$
By Lemma~\ref{lem:root}, $w(E_{-2pq}/\bbQ)=-1$. Hence, assuming the
Parity Conjecture, $r$ is odd. Consequently,
\[
\rank(E_{-2pq}(\bbQ))=
\begin{cases}
1,3,\text{ or }5, & \text{for }\legendre{p}{q}=1,\\
1\text{ or }3, & \text{for }\legendre{p}{q}=-1.
\end{cases}
\]

\subsection*{Proof of Theorem \ref{thm:1.2}}

\begin{proof}
For the six residue classes occurring in the theorem, the relevant pairs of
lemmas are
\[
\begin{array}{c|c}
(p,q)\pmod8 & \text{lemmas used}\\
\hline
(1,3) & \ref{lem:alpha_case_2.1},\ \ref{lem:oalpha_2.1}\\
(3,1) & \ref{lem:alpha_case_3.1},\ \ref{lem:oalpha_3.1}\\
(1,5) & \ref{lem:alpha_case_2.2},\ \ref{lem:oalpha_2.2}\\
(5,1) & \ref{lem:alpha_case_3.3},\ \ref{lem:oalpha_3.3}\\
(1,7) & \ref{lem:alpha_case_1.2},\ \ref{lem:oalpha_1.2}\\
(7,1) & \ref{lem:alpha_case_1.3},\ \ref{lem:oalpha_1.3}
\end{array}
\]
In every row, the corresponding lemmas give
\[
\begin{aligned}
|\alpha(E_{-2pq}(\bbQ))|&\leq8,
&\left|\overline{\alpha}\left(\overline{E_{-2pq}}(\bbQ)\right)\right|&\leq4,
&&\text{for }\legendre{p}{q}=1,\\
|\alpha(E_{-2pq}(\bbQ))|&\leq4,
&\left|\overline{\alpha}\left(\overline{E_{-2pq}}(\bbQ)\right)\right|&=2,
&&\text{for }\legendre{p}{q}=-1.
\end{aligned}
\]
Thus Proposition~\ref{prop:2.1} yields
$\displaystyle
r\leq3 \quad\text{for }\legendre{p}{q}=1,
\qquad
r\leq1 \quad\text{for }\legendre{p}{q}=-1.
$
By Lemma~\ref{lem:root} and the Parity Conjecture, $r$ is odd. Hence
\[
\rank(E_{-2pq}(\bbQ))=
\begin{cases}
1\text{ or }3, & \text{for }\legendre{p}{q}=1,\\
1, & \text{for }\legendre{p}{q}=-1.
\end{cases}
\]
\end{proof}

\subsection*{Proof of Theorem \ref{thm:1.3}}

\begin{proof}
For the eight residue classes in the theorem, use the following pairs of lemmas:
\[
\begin{array}{c|c}
(p,q)\pmod8 & \text{lemmas used}\\
\hline
(3,3) & \ref{lem:alpha_case_4.1},\ \ref{lem:oalpha_4.1}\\
(3,5) & \ref{lem:alpha_case_4.2},\ \ref{lem:oalpha_4.2}\\
(5,3) & \ref{lem:alpha_case_4.3},\ \ref{lem:oalpha_4.3}\\
(5,5) & \ref{lem:alpha_case_4.4},\ \ref{lem:oalpha_4.4}\\
(3,7) & \ref{lem:alpha_case_3.2},\ \ref{lem:oalpha_3.2}\\
(7,3) & \ref{lem:alpha_case_2.3},\ \ref{lem:oalpha_2.3}\\
(5,7) & \ref{lem:alpha_case_3.4},\ \ref{lem:oalpha_3.4}\\
(7,5) & \ref{lem:alpha_case_2.4},\ \ref{lem:oalpha_2.4}
\end{array}
\]
For either value of $\legendre{p}{q}$, the cited lemmas give
$\displaystyle|\alpha(E_{-2pq}(\bbQ))|\leq4,$ and $
\displaystyle
\left|\overline{\alpha}\left(\overline{E_{-2pq}}(\bbQ)\right)\right|=2.
$
Hence Proposition~\ref{prop:2.1} gives $r\leq1$. On the other hand,
Lemma~\ref{lem:root} and the Parity Conjecture imply that $r$ is odd.
Therefore $r=1$ in all eight residue classes, for both
$\legendre{p}{q}=1$ and $\legendre{p}{q}=-1$.
\end{proof}

\subsection*{Proof of Theorem \ref{thm:1.4}}

\begin{proof}
Assume $(p,q)\equiv(7,7)\pmod8$. By Lemmas~\ref{lem:alpha_case_1.4}
and~\ref{lem:oalpha_1.4}, we obtain for both $\legendre{p}{q}=1$ and $\legendre{p}{q}=-1$, $ \displaystyle |\alpha(E_{-2pq}(\bbQ))|\leq8,$ and $ \displaystyle
\left|\overline{\alpha}\left(\overline{E_{-2pq}}(\bbQ)\right)\right|\leq4.
$
Therefore Proposition~\ref{prop:2.1} gives $r\leq3$. Since
$w(E_{-2pq}/\bbQ)=-1$ by Lemma~\ref{lem:root}, the Parity Conjecture implies
that $r$ is odd. Thus $\rank(E_{-2pq}(\bbQ))=1\text{ or }3$.

\end{proof}


\section{Proof of Theorem \ref{thm:1.5}}
In each case, we exhibit an integral solution of the relevant torsor.
The corresponding lemmas determine the images of $\alpha$ and
$\overline{\alpha}$, and Proposition~\ref{prop:2.1} then gives
$\operatorname{rank}(E_{-2pq}(\bbQ))=1$.

\smallskip
\noindent\textbf{(i)}
For $(p,q)\equiv(3,3)\pmod 8$, substituting $M=2$ and $e=1$ into
$\cT_3$ gives
\[
N^2=-2M^4+pqe^4=pq-32.
\]
Thus, $\cT_3$ has the integral solution
$(N,M,e)=(\sqrt{pq-32},2,1)$. By
Lemmas~\ref{lem:alpha_case_4.1} and \ref{lem:oalpha_4.1},
\[
\alpha(E_{-2pq}(\bbQ))=\{1,pq,-2,-2pq\},
\qquad
\overline{\alpha}(\overline{E_{-2pq}}(\bbQ))
=\{1,2pq\}.
\]
Hence, the cardinalities of these two sets are $4$ and $2$,
respectively. Proposition~\ref{prop:2.1} gives the result for both
$\legendre{p}{q}=1$ and $\legendre{p}{q}=-1$.

\begin{table}[!ht]
\centering
\scriptsize
\renewcommand{\arraystretch}{0.9}
\setlength{\tabcolsep}{5pt}
\begin{tabular}{cccc}
\hline
$p$ & $q$ & $N=\sqrt{pq-32}$ &
$\operatorname{rank}(E_{-2pq}(\bbQ))$\\
\hline
$11$ & $3$  & $1$  & $1$\\
$19$ & $3$  & $5$  & $1$\\
$43$ & $11$ & $21$ & $1$\\
\hline
\end{tabular}
\caption{Examples corresponding to part~\textup{(i)}.}
\label{tab:4}
\end{table}

\smallskip
\noindent\textbf{(ii)}
Suppose that $(p,q)\equiv(3,5)\pmod 8$.
For $\legendre{p}{q}=1$, putting $M=e=1$ in $\cT_6$ gives
$N^2=p-2q$; hence
$(N,M,e)=(\sqrt{p-2q},1,1)$ is an integral solution.
For $\legendre{p}{q}=-1$, putting $M=e=1$ in $\cT_5$ gives
$N^2=2p-q$; hence
$(N,M,e)=(\sqrt{2p-q},1,1)$ is an integral solution.
The result now follows from
Lemmas~\ref{lem:alpha_case_4.2} and \ref{lem:oalpha_4.2},
together with Proposition~\ref{prop:2.1}.

\begin{table}[!ht]
\centering
\scriptsize
\renewcommand{\arraystretch}{0.9}
\setlength{\tabcolsep}{2pt}
\begin{tabular*}{\textwidth}{@{\extracolsep{\fill}}cccc|cccc@{}}
\hline
\multicolumn{4}{c}{$\legendre{p}{q}=1$}&
\multicolumn{4}{c}{$\legendre{p}{q}=-1$}\\
\hline
$p$&$q$&$N=\sqrt{p-2q}$&Rank&
$p$&$q$&$N=\sqrt{2p-q}$&Rank\\
\hline
$11$&$5$&$1$&$1$  &$11$&$13$&$3$&$1$\\
$19$&$5$&$3$&$1$  &$19$&$13$&$5$&$1$\\
$59$&$5$&$7$&$1$  &$43$&$5$ &$9$&$1$\\
\hline
\end{tabular*}
\caption{Examples corresponding to part~\textup{(ii)}.}
\label{tab:5}
\end{table}

\smallskip
\noindent\textbf{(iii)}
Suppose that $(p,q)\equiv(5,3)\pmod 8$.
For $\legendre{p}{q}=1$, putting $M=e=1$ in $\cT_4$ gives
$N^2=q-2p$, while for $\legendre{p}{q}=-1$, putting $M=e=1$
in $\cT_7$ gives $N^2=2q-p$. Thus, the relevant integral solutions are
\[
(\sqrt{q-2p},1,1)
\quad\text{and}\quad
(\sqrt{2q-p},1,1),
\]
respectively. Equivalently, both conclusions follow from part~\textup{(ii)}
by interchanging $p$ and $q$.

\begin{table}[!ht]
\centering
\scriptsize
\renewcommand{\arraystretch}{0.9}
\setlength{\tabcolsep}{2pt}
\begin{tabular*}{\textwidth}{@{\extracolsep{\fill}}cccc|cccc@{}}
\hline
\multicolumn{4}{c}{$\legendre{p}{q}=1$}&
\multicolumn{4}{c}{$\legendre{p}{q}=-1$}\\
\hline
$p$&$q$&$N=\sqrt{q-2p}$&Rank&
$p$&$q$&$N=\sqrt{2q-p}$&Rank\\
\hline
$5$&$11$&$1$&$1$ &$13$&$11$&$3$&$1$\\
$5$&$19$&$3$&$1$ &$13$&$19$&$5$&$1$\\
$5$&$59$&$7$&$1$ &$5$ &$43$&$9$&$1$\\
\hline
\end{tabular*}
\caption{Examples corresponding to part~\textup{(iii)}.}
\label{tab:6}
\end{table}

\smallskip
\noindent\textbf{(iv)}
Suppose that $(p,q)\equiv(3,7)\pmod 8$.
For $\legendre{p}{q}=1$, putting $M=1$ and $e=2$ in $\cT_5$
gives
\[
N^2=-qM^4+2pe^4=32p-q,
\]
and hence the integral solution
$(N,M,e)=(\sqrt{32p-q},1,2)$.
For $\legendre{p}{q}=-1$, putting $M=e=1$ in $\cT_4$ gives
$N^2=q-2p$ and the integral solution
$(N,M,e)=(\sqrt{q-2p},1,1)$.
The corresponding descent lemmas and Proposition~\ref{prop:2.1}
give the result.

\begin{table}[!ht]
\centering
\scriptsize
\renewcommand{\arraystretch}{0.9}
\setlength{\tabcolsep}{2pt}
\begin{tabular*}{\textwidth}{@{\extracolsep{\fill}}cccc|cccc@{}}
\hline
\multicolumn{4}{c}{$\legendre{p}{q}=1$}&
\multicolumn{4}{c}{$\legendre{p}{q}=-1$}\\
\hline
$p$&$q$&$N=\sqrt{32p-q}$&Rank&
$p$&$q$&$N=\sqrt{q-2p}$&Rank\\
\hline
$3$ &$71$ &$5$&$1$ &$3$ &$7$ &$1$&$1$\\
$11$&$271$&$9$&$1$ &$11$&$23$&$1$&$1$\\
$19$&$599$&$3$&$1$ &$11$&$71$&$7$&$1$\\
\hline
\end{tabular*}
\caption{Examples corresponding to part~\textup{(iv)}.}
\label{tab:7}
\end{table}

\smallskip
\noindent\textbf{(v)}
Suppose that $(p,q)\equiv(7,3)\pmod 8$.
For $\legendre{p}{q}=1$, putting $M=e=1$ in $\cT_6$ gives
$N^2=p-2q$ and the solution
$(N,M,e)=(\sqrt{p-2q},1,1)$.
For $\legendre{p}{q}=-1$, putting $M=1$ and $e=2$ in $\cT_7$
gives $N^2=32q-p$ and the solution
$(N,M,e)=(\sqrt{32q-p},1,2)$.
Equivalently, this follows from part~\textup{(iv)} by interchanging
$p$ and $q$.

\begin{table}[!ht]
\centering
\scriptsize
\renewcommand{\arraystretch}{0.9}
\setlength{\tabcolsep}{2pt}
\begin{tabular*}{\textwidth}{@{\extracolsep{\fill}}cccc|cccc@{}}
\hline
\multicolumn{4}{c}{$\legendre{p}{q}=1$}&
\multicolumn{4}{c}{$\legendre{p}{q}=-1$}\\
\hline
$p$&$q$&$N=\sqrt{p-2q}$&Rank&
$q$&$p$&$N=\sqrt{32q-p}$&Rank\\
\hline
$7$ &$3$ &$1$&$1$ &$3$ &$71$ &$5$&$1$\\
$23$&$11$&$1$&$1$ &$11$&$271$&$9$&$1$\\
$71$&$11$&$7$&$1$ &$19$&$599$&$3$&$1$\\
\hline
\end{tabular*}
\caption{Examples corresponding to part~\textup{(v)}.}
\label{tab:8}
\end{table}

\smallskip
\noindent\textbf{(vi)}
Suppose that $(p,q)\equiv(5,7)\pmod 8$.
For $\legendre{p}{q}=1$, putting $M=1$ and $e=2$ in $\cT_5$
gives $N^2=32p-q$ and the solution
$(N,M,e)=(\sqrt{32p-q},1,2)$.
For $\legendre{p}{q}=-1$, putting $M=e=1$ in $\cT_7$ gives
$N^2=2q-p$ and the solution
$(N,M,e)=(\sqrt{2q-p},1,1)$.
The corresponding descent lemmas and Proposition~\ref{prop:2.1}
give the result.

\begin{table}[!ht]
\centering
\scriptsize
\renewcommand{\arraystretch}{0.9}
\setlength{\tabcolsep}{2pt}
\begin{tabular*}{\textwidth}{@{\extracolsep{\fill}}cccc|cccc@{}}
\hline
\multicolumn{4}{c}{$\legendre{p}{q}=1$}&
\multicolumn{4}{c}{$\legendre{p}{q}=-1$}\\
\hline
$p$&$q$&$N=\sqrt{32p-q}$&Rank&
$p$&$q$&$N=\sqrt{2q-p}$&Rank\\
\hline
$5$ &$79$ &$9$ &$1$ &$5$ &$7$ &$3$&$1$\\
$13$&$191$&$15$&$1$ &$13$&$31$&$7$&$1$\\
$13$&$367$&$7$ &$1$ &$61$&$71$&$9$&$1$\\
\hline
\end{tabular*}
\caption{Examples corresponding to part~\textup{(vi)}.}
\label{tab:9}
\end{table}

\smallskip
\noindent\textbf{(vii)}
Suppose that $(p,q)\equiv(7,5)\pmod 8$.
For $\legendre{p}{q}=1$, putting $M=e=1$ in $\cT_5$ gives
$N^2=2p-q$ and the solution
$(N,M,e)=(\sqrt{2p-q},1,1)$.
For $\legendre{p}{q}=-1$, putting $M=1$ and $e=2$ in $\cT_7$
gives $N^2=32q-p$ and the solution
$(N,M,e)=(\sqrt{32q-p},1,2)$.
Equivalently, the result follows from part~\textup{(vi)} by
interchanging $p$ and $q$.


\begin{table}[!ht]
\centering
\scriptsize
\renewcommand{\arraystretch}{0.9}
\setlength{\tabcolsep}{2pt}
\begin{tabular*}{\textwidth}{@{\extracolsep{\fill}}cccc|cccc@{}}
\hline
\multicolumn{4}{c}{$\legendre{p}{q}=1$}&
\multicolumn{4}{c}{$\legendre{p}{q}=-1$}\\
\hline
$p$&$q$&$N=\sqrt{2p-q}$&Rank&
$p$&$q$&$N=\sqrt{32q-p}$&Rank\\
\hline
$79$ &$5$ &$9$ &$1$& $7$ &$5$ &$3$&$1$ \\
$191$&$13$&$15$&$1$&$31$&$13$&$7$&$1$ \\
$367$&$13$&$7$ &$1$ & $71$&$61$&$9$&$1$\\
\hline
\end{tabular*}
\caption{Examples corresponding to part~\textup{(vii)}.}
\label{tab:10}
\end{table}

\smallskip
\noindent\textbf{(viii)}
Suppose that $(p,q)\equiv(5,5)\pmod 8$ and that $2pq-1$ is a
perfect square. Putting $M=e=1$ in $\cT_1$ gives
\[
N^2=-M^4+2pqe^4=2pq-1.
\]
Thus, $\cT_1$ has the integral solution
$(N,M,e)=(\sqrt{2pq-1},1,1)$.
The corresponding descent lemmas and Proposition~\ref{prop:2.1}
give the result for both $\legendre{p}{q}=1$ and
$\legendre{p}{q}=-1$.

\begin{table}[!ht]
\centering
\scriptsize
\renewcommand{\arraystretch}{0.9}
\setlength{\tabcolsep}{5pt}
\begin{tabular}{cccc}
\hline
$p$&$q$&$N=\sqrt{2pq-1}$&
$\operatorname{rank}(E_{-2pq}(\bbQ))$\\
\hline
$5$ &$29$&$17$&$1$\\
$13$&$37$&$31$&$1$\\
\hline
\end{tabular}
\caption{Examples corresponding to part~\textup{(viii)}.}
\label{tab:11}
\end{table}
\qed

\end{document}